\newenvironment{customproblem}[1]
  {\bigskip \innercustomproblem} \ \bigskip 
\newcommand{\eps}{\varepsilon}
\newcommand{\half}{\frac{1}{2}}
\newcommand{\energy}{\mathcal{E}}
\newcommand{\dW}{d_\mathcal{W}}
\newcommand{\Dt}{{\Delta t}}
\newcommand{\Dtime}{1}
\newcommand{\prox}{\text{Prox}}
\newcommand{\proj}{\text{Proj}}
\newcommand{\RR}{\mathbb{R}}
\newcommand{\Amat}{\mathsf{A}}
\newcommand{\Bmat}{\mathsf{B}}
\newcommand{\Dmat}{\mathsf{D}}
\newcommand{\Imat}{\mathsf{I}}
\newcommand{\Kron}{\otimes}
\newcommand{\indi}{\mathfrak{i}}
\newcommand{\Smat}{\mathsf{S}}
\newcommand{\rd}{\mathrm{d}}
\newcommand{\Rd}{{\mathbb{R}^d}}
\newcommand{\R}{\mathbb{R}}
\newcommand{\C}{\mathcal{C}}
\newcommand{\wsto}{\stackrel{*}{\rightharpoonup}}
\newcommand{\dx}{{{  (\Delta x)^d}}}
\newcommand{\brho}{\overline{\rho}}
\newcommand{\grad}{\nabla}
\newcommand{\F}{\mathcal{F}}
\newcommand{\G}{\mathcal{G}}
\def\bm{\overline{m}}
\def\P{{\mathcal P}}
\def\H{{\mathcal H}}
\DeclareMathOperator*{\argmin}{arg\,min}
\newcommand{\average}[1]{\left\langle#1\right\rangle}
\begin{document}
\title{Primal dual  methods for  Wasserstein gradient flows}
\author{Jos\'e A. Carrillo \and  Katy Craig \and Li Wang \and  Chaozhen Wei }  
\institute{J.A. Carrillo \at Mathematical Institute, University of Oxford, Oxford OX2 6GG, UK. \\\email{carrillo@maths.ox.ac.uk.}  
\and  K. Craig \at Department of Mathematics, University of California, Santa Barbara, CA 93117, USA. \\\email{kcraig@math.ucsb.edu.} 
\and L. Wang \at School of Mathematics, University of Minnesota, Twin Cities, MN 55455, USA. \\
\email{wang8818@umn.edu.}
\and Chaozhen Wei \at Department of Mathematical Sciences, Worcester Polytechnic Institute, Worcester, MA 01609, USA. \\\email{cwei4@wpi.edu.}}
\date{\today }

\maketitle

\begin{abstract}
Combining the classical theory of optimal transport with modern operator splitting techniques, we develop a new numerical method for nonlinear, nonlocal partial differential equations, arising in models of porous media, materials science, and biological swarming. Our method proceeds as follows: First, we discretize in time, either via the classical JKO scheme or via a novel Crank-Nicolson type method we introduce. Next, we use the Benamou-Brenier  dynamical characterization of the Wasserstein distance to reduce computing the solution of the discrete time equations to solving fully discrete minimization problems, with strictly convex objective functions and linear constraints. Third, we compute the minimizers by applying a recently introduced, provably convergent primal dual splitting scheme for three operators \cite{Yan17}.

By leveraging the PDEs' underlying variational structure, our method  overcomes stability issues present in   previous numerical work built on explicit time discretizations, which suffer due to the equations' strong nonlinearities and degeneracies. Our method  is also naturally positivity and mass preserving and, in the case of the JKO scheme, energy decreasing.   We prove that  minimizers of the fully discrete problem converge to minimizers of the spatially continuous, discrete time problem as the spatial discretization is refined. We conclude with simulations of nonlinear PDEs and Wasserstein geodesics in one and two dimensions that illustrate the key properties of our approach, including higher order convergence  our novel Crank-Nicolson type method, when compared to the classical JKO method.
\end{abstract}

\keywords{optimal transport, optimization schemes, steepest descent schemes, gradient flows, minimizing movements, primal dual methods}

\subclass{35A15 \and 47J25 \and 47J35 \and 49M29 \and 65K10 \and 82B21}

Communicated by Eitan Tadmor
 
%\tableofcontents

%%%%%%%%%%%%%%%%%%%%%%%%%%%%%%%%%%%%%%%%%%%%%%%%%

\section{Introduction} 
Gradient flow methods are classical techniques for the analysis and numerical simulation of partial differential equations. Historically, such methods were exclusively based on gradient flows arising from a Hilbert space structure, particularly $L^2(\Rd)$, but since the work of Jordan, Kinderlehrer, and Otto in the late 90's \cite{Otto, O2, JKO98}, interest has emerged in a range of nonlinear, nonlocal partial differential equations that are gradient flows in the \emph{Wasserstein metric},
\begin{align} \label{maineqn}
\begin{cases}
\partial_t \rho = \grad \cdot (\rho \grad V) + \grad \cdot (\rho \grad W*\rho) + \alpha \Delta \rho^m , \quad x \in \Omega \subseteq \Rd, \quad V,W:\Omega \to \R ,
\\  \rho(x,0) = \rho_0(x)\,, \qquad \qquad \qquad \qquad  \qquad \qquad \hspace{.4cm}  m \geq 1, \quad \alpha \geq 0.
\end{cases}
\end{align}
When $\Omega \neq \Rd$, we consider no-flux boundary conditions.

Equations of this form arise in a number of physical and biological applications, including models in granular media \cite{BCP,Toscani00, CMV03, cmcv-06}, material science \cite{HP05}, and biological swarming \cite{CFTV10, KCBFL13,BCCD16}. Furthermore, many well-known equations may be written in this way: when $V = W = 0$ and $\alpha =1$, equation (\ref{maineqn}) reduces to the heat equation ($m=1$),   porous medium equation ($m>1$), and fast diffusion equation ($m<1$)  \cite{Vazquez07}. In the presence of a drift potential $V$, it becomes a  Fokker-Planck equation ($m=1$) or nonlinear Fokker-Planck equation ($m>1$), as used in models of tumor growth  \cite{Tang13, PQV14}. When the interaction potential $W$ is given by a repulsive-attractive Morse or power-law potential,
\begin{align} \label{repattW}
W(x) &= -C_a e^{- |x|/l_a} + C_r e^{-|x|/l_r} , \quad C_r/C_a<(l_r/l_a)^{-d}, 0 < l_r< l_a, 0< C_a<C_r , \\
W(x) &= \frac{|x|^a}{a} - \frac{|x|^b}{b}, \quad -d < b  < a , \nonumber
\end{align}
we recover a range of nonlocal interaction models, which are repulsive at short length scales and attractive at long length scales \cite{TBL06,BCLR13,BCLR13-2,CDM}. When $W = (\Delta)^{-1}$, the Newtonian potential, we have the Keller-Segel equation and its nonlinear diffusion variants  \cite{KellerSegel1971, BDP06,blanchet2012functional,CCH1,CCH2,CHMV,CCY}. Finally, as the diffusion exponent $m \to +\infty$, we recover congested aggregation and drift equations arising in models of pedestrian crowd dynamics and shape optimization problems \cite{CKY17, LTWZ18a, MRS, MRSV,BuChTo2016,FrLi2016}. 

In order to describe the gradient flow structure of equation (\ref{maineqn}), we begin by rewriting it as a continuity equation in $\rho(x,t)$ for a velocity field $v(x,t)$,
\begin{equation} \label{eqn:000}
\begin{cases}
\partial_t \rho = - \nabla \cdot (\rho v) := \nabla \cdot \left[ \rho \nabla \left(\alpha U_m'(\rho) + V + W* \rho \right)  \right],
\\  \rho(x,0) = \rho_0(x)\,,
\end{cases}
\quad  U_m(s) =\begin{cases} s \ln(s) &\text{ for } m = 1 , \\
\frac{s^m}{m-1} &\text{ for } m >1 . \end{cases}
\end{equation}
In this form, two key properties of the equation become evident: it is positivity preserving and conserves mass. In what follows, we will always consider nonnegative initial data, and we will typically renormalize so that the mass of the initial data equals one, i.e., $\rho_0 \in \P_{ac}(\Omega)$, where $ \P_{ac}(\Omega)$ is the set of probability measures on $\Omega$ that are absolutely continuous with respect to Lebesgue measure. Furthermore, as our objective is to develop a numerical method for these equations, we will exclusively consider the case when $\Omega$ is a bounded domain. Throughout, we commit a mild abuse of notation and identify all such probability measures with their densities, $d \rho(x) = \rho(x) \rd x$.

 As discovered by Otto \cite{O2}, given an energy $\energy: \P_{ac}(\Omega) \to \R \cup \{ +\infty\}$, we may formally define its gradient with respect to the Wasserstein metric $\dW$ using the formula
\[ \grad_{\dW} \energy(\rho) = - \grad \cdot \left( \rho \grad  \frac{\delta \energy}{\delta \rho} \right)  \,. \]
(See Section \ref{sec:dynamicJKO} for the definition of the Wasserstein metric $\dW$.)  In this way, gradient flows of $\energy$, $\partial_t \rho = - \grad_{\dW} \energy(\rho)$, correspond to solutions of the continuity equation with velocity $v = - \nabla \frac{\delta \energy}{ \delta \rho }$. In particular, equation (\ref{eqn:000}) is the gradient flow of the energy
\begin{equation} \label{eqn:energy}
\energy (\rho) = \int_{\Omega} \left[\alpha U(\rho(x)) + V(x) \rho(x) \right]\rd x + \half \int_{\Omega \times \Omega} W(x-y) \rho(x) \rho(y) \rd x \rd y\,.
\end{equation}
Differentiating the energy (\ref{eqn:energy}) along solutions of (\ref{eqn:000}), one formally obtains that the energy is decreasing along the gradient flow
\begin{equation} \label{energydissipation1}
\frac{d}{dt} \energy(\rho)(t) = - \int_{\RR^d} |v(t,x)|^2 \rho(t,x) \rd x\, ,
\end{equation}
which coincides with the theoretical interpretation of gradient flows as solutions that evolve in the direction of \emph{steepest descent} of an energy, where the notion of steepest descent is induced by the Wasserstein metric structure.

A key feature of equations of the form (\ref{eqn:000}) is the competition between repulsive and attractive effects. For repulsive-attractive interaction kernels $W$, as in equation (\ref{repattW}), these effects can arise purely through   nonlocal interactions, leading to rich structure of the steady states \cite{FHK11,BLL12,BCLR13-2,BKSUV15,CDM}. For purely attractive interaction kernels $W$, as in the Keller-Segel equation, the competition instead arises from the combination of nonlocal interaction with diffusion. In this case, different choices of interaction kernel $W$, diffusion exponent $m$, and initial data $\rho_0$ can lead to widely different behavior---from bounded solutions being globally well-posed to smooth solutions blowing up in finite time \cite{BDP06,blanchet2012functional,CCH1,CCH2,CHMV,CCY}.

\begin{figure}[!h]
\includegraphics[width=16.5cm,trim={1.5cm 0cm .5cm 0cm},clip ]{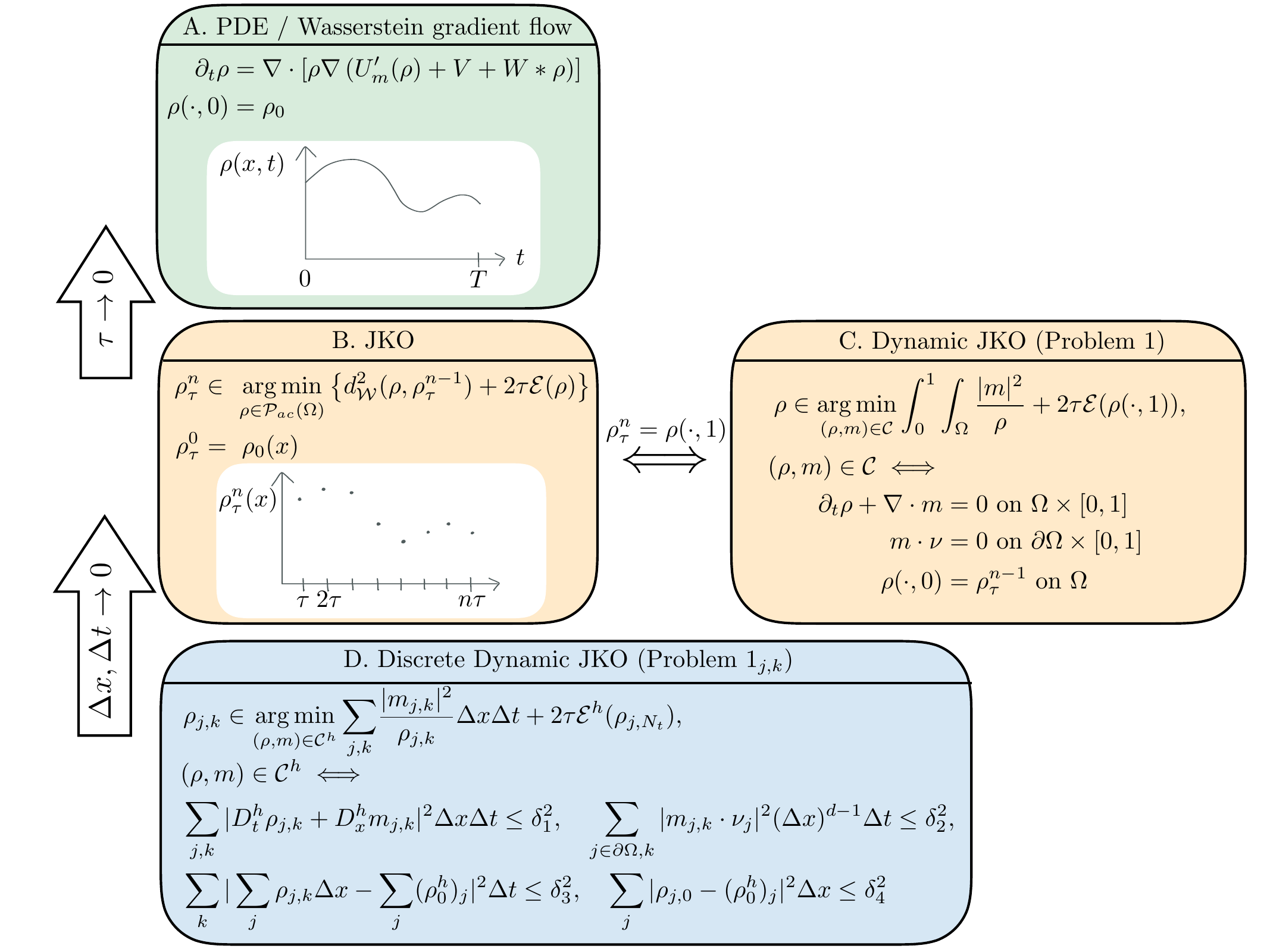}
\caption{Levels of discretization: $\tau$ is the outer JKO time step, $\Delta t$ is the inner time step, and $\Delta x$ is the spatial discretization. }
\label{fig:level}
\end{figure}

\subsection{Summary of numerical approach}
The goal of the present work is to develop new numerical approach for partial differential equations of the form (\ref{maineqn}) that combine gradient flow methods with modern operator splitting techniques. Our approach applies to equations of this form with any combination of diffusion $\alpha U'_m(\rho)$ $(\alpha \geq 0$), drift $V$, or interaction $W*\rho$ terms---in particular, it is not necessary for diffusion to be present in order for our scheme to converge. 

The main idea of our approach is to discretize the PDE/Wasserstein gradient flow at two levels. First, we consider  a  time discretization of the gradient flow with time step $\tau$ (see Figure \ref{fig:level}.B), either given by the classical JKO scheme (equation (\ref{eqn:JKO-000}) below) or a new Crank-Nicolson inspired variant (equation \eqref{2nd} below). This reduces computation of the gradient flow to solving a sequence of infinite dimensional minimization problems. Then, we  consider a dynamical reformulation of these minimization problems, stemming from  Benamou and Brenier's dynamic characterization of the Wasserstein metric, by which the problem becomes the minimization of a strictly convex integral functional subject to a linear PDE constraint (see Figure \ref{fig:level}.C). At this level, the problem remains continuous in space and time. We conclude by considering a further discretization of the problem, with inner time step $(\Delta t)$ and spatial discretization $(\Delta x)$, by taking piecewise constant approximations of the functions and using a finite difference approximation of the PDE constraint (see Figure \ref{fig:level}.D). In this final, fully discrete form, we then compute the minimizer using modern operator splitting techniques, applying Yan's recent extension of the classical primal dual algorithm for minimizing sums of three convex functions \cite{Yan17}.

Our paper is organized as follows. In section \ref{previousworksection}, we discuss the relationship between  our numerical approach  and previous work. In section \ref{contributionsection}, we summarize our contribution. In section \ref{sec:nummethod}, we describe   the details of our numerical method. Along with numerically simulating Wasserstein gradient flows, our method also provides, as a special case, a new method for computing Wasserstein geodesics and the Wasserstein distance between probability densities; see Remark \ref{W2georemark}. In section \ref{sec:convergence}, we prove that, provided  a smooth, positive solution of the continuum JKO scheme exists and the energy corresponding to the PDE is sufficiently regular, then minimizers of the fully discrete problem exist (Theorem \ref{minimizersexist}), the objective functions of the discrete problems $\Gamma$-converge to the objective function of the continuum problem (Theorem \ref{gammaconvergence}), and thus  solutions of the fully discrete scheme converge, up to a subsequence, to  a solution of the continuum scheme (Theorem \ref{convergence}). As a special case, we also recover convergence of a numerical method for computing Wasserstein geodesics, similar to that introduced by Papadakis, P\'eyre, and Oudet \cite{PPO14}. Finally, in section \ref{sec:examples}, we provide several numerical simulations illustrating our approach in both one and two dimensions, computing Wasserstein geodesics, nonlinear Fokker-Planck equations, aggregation diffusion equations, and other related equations.

\subsection{Details of   approach and comparison with previous work} \label{previousworksection}

\subsubsection{Classical numerical PDE methods}
We now compare our approach to existing numerical methods.
 Perhaps the most common numerical approach for equations of the form (\ref{maineqn}) is to consider the equation as an advection-diffusion equation  and apply classical  finite difference, finite volume, or Galerkin discretizations \cite{Filbet06, CK08, CCH15, LWZ17,BCH}. However, when such methods are based on explicit time discretizations, they    suffer from stability constraints  due either to the degeneracy of the diffusion  (when $m>1$)  or the non-locality from the  interaction potential $W$. (See for instance the mesa problem  \cite{LTWZ18b}.) Implicit time discretizations, on the other hand, are computationally intensive, due to the difficulty of matrix inversion, even when the implicit steps are solved by smart iterative methods to avoid the high computation cost of convolution \cite{BCH}.

Another common approach is to  leverage structural similarities between (\ref{eqn:000}) and equations from fluid dynamics  to develop particle methods  \cite{CB16, CCCC18,BLL12,CarrilloChoiHauray,CDFLS11,CHPW17,CPSW16,CraigTopalogluHeight,MO17,OM17}. Until recently, the key limitation of such methods has been developing approaches to incorporate diffusion. Following the analogy with the Navier-Stokes equations, stochastic particle methods have been proposed in the case of linear diffusion ($m=1$) \cite{Jabin,JW,Liu1,Liu2}. More recently the first two authors and Patacchini developed a deterministic blob method for linear and nonlinear diffusion ($m \geq 1$) \cite{CCP18}. On one hand, particle methods naturally conserve mass and positivity, and they can also be designed to respect the underlying gradient flow structure of the equation, including the energy dissipation property (\ref{energydissipation1}). On the other hand, a large number of particles is often required to resolve finer properties of solutions.

In constrast with such classical  methods, our method introduces an auxiliary momentum variable $m$ and an additional inner layer of time discretization, which enlarges the dimension of the problem. However, as later pointed out in \cite{LLW20}, the inner layer of time can be discretized with just one step without violating the overall first order accuracy, there completely eliminating the additional cost introduced by the inner layer. Another major advantage of our approach is that, by reforming the PDE problem into an optimization problem, we obtain unconditional stability (for the JKO discretization, see equation (\ref{eqn:JKO-000}) below) while avoiding the inversion of a full matrix in the general implicit setting, which is extremely expensive, especially in higher dimensions; see for instance \cite{BCH}. 
Finally, compared to other implicit methods, such as the backward Euler method, the sub optimization problems can be solved independently at each gridpoint, and therefore are massively parallelizable and suitable for high dimensional problems.

\subsubsection{Variational methods}

Compared to the classical numerical PDE approaches described in the previous section, a more modern class of numerical methods  leverages the gradient flow structure of (\ref{maineqn})  to approximate solutions of the PDE by solving a sequence of minimization problems. This is the approach we take in the present work. Originally introduced by Jordan, Kinderlehrer, and Otto as a technique for computing solutions of the Fokker-Planck equation (equation (\ref{maineqn}), $W=0$, $m=1$)  \cite{JKO98}, this scheme approximates the solution $\rho(x,t)$ at time $t$ by solving the following sequence of  $n$ minimization problems with time step $\tau = t/n$,
\begin{equation} \label{eqn:JKO-000}
\rho_\tau^{n} \in  \argmin_{\rho\in \P_{ ac}(\Omega)} \left\{ \dW^2 (\rho, \rho_\tau^{n-1}) + 2 \tau \energy(\rho) \right\} \,, \quad \rho_\tau^0 = \rho_0(x) .
\end{equation}
The JKO scheme is precisely the analogue of the implicit Euler method in the infinite dimensional Wasserstein space. The constraint  $\rho \in \P_{ ac}(\Omega)$ ensures that the method is positivity and mass preserving, and the fact that $\dW^2(\rho,\rho^n) \geq 0$ ensures the energy decreasing along the scheme, $\energy(\rho^{n+1}_\tau) \leq \energy(\rho^{n}_\tau)$.

Under sufficient assumptions on the underlying domain $\Omega$, drift potential $V$, interaction potential $W$, and initial data $\rho_0$ (see section \ref{sec:dynamicJKO}), the solution of the JKO scheme $\rho^n_\tau$ converges to the solution $\rho(x,t)$ of the partial differential equation (\ref{maineqn}), with a first order rate in terms of the time step $\tau = t/n$ \cite[Theorem 4.0.4]{AGS},
 \[ \dW(\rho_n(\cdot), \rho(\cdot, t)) \leq  C \tau . \]
 
 In our numerical simulations, we observe that this discretization error dominates other errors in our numerical method; see sections \ref{sec:PME} and \ref{sec:FP}. Consequently, we also introduce a new  time discretization, in analogy with the Crank-Nicolson method
  \begin{equation} \label{2nd}
\rho^{n+1} \in  \argmin_{\rho\in \P_{ ac}(\Omega)} \left\{  \dW^2 (\rho, \rho^n) + \tau\energy(\rho) + \tau  \int_\Omega \frac{\delta \energy}{\delta \rho} (\rho^n) \rho   \right\}\,.
\end{equation}
The connection between the above scheme and the classical Crank-Nicolson discretization can be seen by considering the optimality conditions for (\ref{2nd}):
 \[ \frac{1}{\tau} (\rho^{n+1}  - \rho^n) = \frac{1}{2} \nabla \cdot  \left(   \rho^{n+1} \nabla \left( \frac{\delta \energy(\rho^{n+1})}{\delta \rho} + \frac{\delta \energy (\rho^n)}{\delta \rho}\right)  \right) . \]
Like the JKO scheme, our Crank-Nicolson inspired method is also positivity and mass-preserving, though it is not energy decreasing. In Figures \ref{fig:convg}, \ref{fig:convgsmooth}, and \ref{fig:FK1D-conv} of our numerics section, we conduct a preliminary analysis of the rate of convergence of this method, which verifies that it is indeed higher order than the JKO scheme. As the goal of the present work is primarily the development of fully discrete numerical schemes, we leave a thorough analysis of the rate of convergence of our Crank-Nicolson inspired method as $\tau \to 0$ to  future work. 

On one hand, our Crank-Nicolson inspired method (\ref{2nd}) is not the first higher order method proposed for metric space gradient flows: Matthes and Plazotta developed a provably second order scheme for general metric space gradient flows by generalizing the backward differentiation formula \cite{matthes2017variational}.  The Matthes-Plazotta method, however, requires two evaluations of the Wasserstein distance at each outer time step and thus is less practical for our purpose of numerically computing gradient flows in higher dimensions. Another method was introduced by Legendre and Turinici \cite{LT17} based on the midpoint method. This method can be reformulated as the classical JKO step with half time step followed by an extrapolation. This extrapolation step could be implemented  by solving the corresponding continuity equation either explicitly or implicitly; however solving the equation explicitly could potentially violate conservation of positivity, while solving it implicitly would require an additional matrix inversion.    Another higher order variational method was also proposed in \cite{Lagu}, which resembles explicit Runge-Kutta methods and, again, require two or more evaluations of the Wasserstein distance at each outer time step.  

\subsubsection{Numerical methods for the Wasserstein distance}
To use either the classical JKO scheme (\ref{eqn:JKO-000}) or our new  Crank-Nicolson inspired scheme (\ref{2nd}) as a basis for numerical simulations, one must first develop a fully discrete approximation of the minimization problem at each step of the scheme. Here, the main numerical difficulty arises in approximating the Wassserstein distance, and there are several different approaches for dealing with this term. 
First, one can reformulate the Wasserstein distance in terms of a Monge-Amp\'ere equation with nonstandard boundary conditions \cite{GM96, BFO14}, though difficulties arise due to the lack of a comparison principle \cite{hamfeldt2018viscosity}. Second, one can reframe the problem as a classical $L^2(\Rd)$ gradient flow at the level of diffeomorphisms \cite{GT06,BCC08,CM09, CRW16,CDMM18}, but to pursue this approach, one has to overcome complications arising from the underlying geometry and the structure of the PDE system for the diffeomorphisms. Third, one can discretize the Wasserstein distance term as a finite dimensional linear program, overcoming the lack of strict convexity of the objective function by adding a small amount of entropic regularization \cite{cuturi13, Benamou15, CPSV16}. (For a detailed survey of  computational optimal transport, we  refer the reader to the recent book by P\'eyre and Cuturi for   \cite{PC18}.)

A fourth approach for computing the Wasserstein distance, and the one which we   develop in the present work, is to consider a dynamic formulation  due to Benamou and Brenier \cite{BB00}. This reframes the problem as a  strictly convex optimization problem with linear PDE constraints, which can  be discretized using Benamou and Brenier's original  augmented Lagrangian method ALG2 or, more generally, a range of modern proximal splitting methods, as shown by Papadakis, Peyre, and Oudet  \cite{PPO14}. (See also \cite{briceno2018proximal,briceno2019implementation} for related work on mean field games.) Adding an additional Fisher information term in this dynamic formulation (in analogy with entropic regularization) has also been explored in \cite{LYO17}. 

Only recently have these above approaches for computing the Wasserstein distance been integrated with the JKO scheme (\ref{eqn:JKO-000}) in order to simulate   partial differential equations of the form (\ref{maineqn}). The Monge-Amp\'ere approach extends naturally, though the presence of a diffusion term $\alpha U'_m(\rho)$ for $\alpha >0$ is required to enforce convexity constraints at the discrete level \cite{BCMO16}. Similarly, entropic regularization (or the addition of a Fisher information term) vastly accelerates the computation of gradient flows, but at the level of the partial differential equation, this corresponds to introducing numerical diffusion, which may disrupt the delicate balance between aggregation and diffusion inherent in PDEs of this type  \cite{CPSV16, CDPS17,LYO17}.  
Finally, Benamou and Brenier's dynamic reformulation of the Wasserstein distance has also been adopted in recent work to approximate gradient flows \cite{BCL16}. A key benefit of this latter approach when compared to  entropic regularization   is that it leads to an optimization problem in $N_x^d \times N_t$ variables, where $N_x$ and $N_t$ are the number of spatial and temporal gridpoints, whereas the latter leads to an optimization problem in $N_x^{2d}$ variables.

In the present work, we further  develop this last approach, using  Benamou and Brenier's dynamic reformulation of the Wasserstein distance to simulate Wasserstein gradient flows, via  both the classical JKO scheme (\ref{eqn:JKO-000}) and our new Crank-Nicolson inspired scheme (\ref{2nd}). This leads to a sequence of minimization problems (Figure \ref{fig:level}.C), which we discretize (Figure \ref{fig:level}.D) and then solve using  a modern primal dual three operator splitting scheme due to Yan \cite{Yan17}, instead of the classical ALG2 method. See section \ref{sec:nummethod} for a detailed description of our approach.

Due to the fact that we use operator splitting methods to compute the minimizer in Benamou and Brenier's dynamic formulation of the Wasserstein distance,  our work can be seen as an extension of previous work by Papadakis, Peyre, and Oudet \cite{PPO14}, which applied   similar two operator splitting schemes to simulate the Wasserstein distance.  However, there are a few key differences between our approach and previous work. First, we are able to implement the primal dual splitting scheme in a manner that does not require matrix inversion of the finite difference operator, which reduces the computational cost. Second, we succeed in obtaining the exact expression for the proximal operator, which allows our method to be truly positivity preserving, while other similar methods are only positivity preserving in the limit as $\Delta x, \Delta t \to 0$; see Remark \ref{comparisonremark}. Third, instead of imposing  the linear PDE constraint in Benamou and Brenier's dynamic reformulation \emph{exactly}, via a finite difference approximation,  we allow the linear PDE constraint to hold up to an error of order $\delta>0$, which can be tuned according to the spatial discretization $(\Delta x)$, the inner temporal discretization $(\Delta t)$, and the outer time step $\tau$ to respect the order of accuracy of the finite difference approximation; see Remark \ref{relaxpderemark}. Numerically, this allows our method to converge in fewer iterations, without any reduction in accuracy, as demonstrated in Figure \ref{fig:optimaldelta}. From a theoretical perspective, the fact that we only require the PDE constraint to hold up to an error of order $\delta> 0$ makes it \emph{possible} to prove convergence of minimizers of the fully discrete problem to minimizers of the JKO scheme (\ref{eqn:JKO-000}), since minimizers of the fully discrete problem always exist for $\delta  >0$, which is not the case when the PDE constraint is enforced exactly ($\delta = 0$); see Remark \ref{deltaremark} and Theorem \ref{minimizersexist}.

\subsection{Contribution} \label{contributionsection}
The main components of our numerical method for computing solutions to (\ref{maineqn}) are:
\begin{enumerate}[(a)]
\item an outer time discretization, of either  JKO (\ref{eqn:JKO-000}) or Crank-Nicolson type (\ref{2nd}) (Figure \ref{fig:level}.B)  
\item a dynamic interpretation of the Wasserstein distance (Figure \ref{fig:level}.C), which when discretized via finite difference approximations leads to a sequence of constrained optimization problems (Figure \ref{fig:level}.D)
\item an application of modern three operator splitting schemes for solving these optimization problems.
\end{enumerate}
Our main contributions are:
\begin{itemize}
\item Unlike classical explicit methods, our JKO type method is unconditionally stable. Unlike classical implicit methods, it achieves this stability without an expensive matrix inversion.
\item In practice, we observe that our Crank-Nicolson type  method performs even better than our JKO type method, in terms of rate of convergence with respect to the outer time step (see Figures \ref{fig:convg}, \ref{fig:convgsmooth}, and \ref{fig:FK1D-conv}). We leave a thorough analysis of the rate of this convergence of this method to future work.
\item By formulating our optimization problem with a linear \emph{inequality} constraint instead of a linear \emph{equality} constraint, our algorithm converges in fewer iterations when compared to related algorithms for Wasserstein geodesics; see Remark \ref{relaxpderemark} and Figure \ref{fig:optimaldelta}.
\item We prove convergence of our fully discrete method (Figure \ref{fig:level}.D) to the JKO scheme (Figure \ref{fig:level}.B and C) as the spatial discretization and inner time discretization go to zero. 
\end{itemize}

%%%%%%%%%%%%%%%%%%%%%%%%%%%%%%%%%%%%%%%%%%%%%%%%%%
%\section{Numerical formulation of the Wasserstein distance} \label{sec:formulation}
\section{Numerical Method} \label{sec:nummethod}
\subsection{Dynamic formulation of JKO scheme} \label{sec:dynamicJKO}

As described in the previous section, our numerical method for computing the JKO scheme is based on the following  dynamic reformulation of the Wasserstein distance due to Benamou and Brenier \cite{BB00}:
\begin{align} \label{BB1}
&\dW(\rho_0,\rho_1)   = \inf_{(\rho,v) \in \C_0}  \left\{ \int_0^1 \int_\Omega |v(x,t)|^2 \rd \rho(x,t)  \rd t \right\}^{1/2} ,
\end{align}
where $(\rho,v)\in AC(0,1;\P(\Omega) )\times  L^1(0,1; L^2(\rho))$ belongs to the constraint set $\C_0$ provided that
\begin{align} \label{BB2}
 \partial_t \rho + \nabla \cdot (\rho v) = 0 &\text{ on } \Omega \times[0,1] \\
  (\rho v) \cdot \nu = 0   &\text{ on } \partial \Omega \times [0,1] ,\label{BB3p5} \\
 \rho(\cdot,0) = \rho_0, \ \rho(\cdot, \Dtime) = \rho_{\Dtime} &\text{ on } \Omega, \label{BB3}
\end{align}
where $\nu$ is the outer unit normal on the boundary of the   domain $\Omega$.
A curve $\rho$ in $\P(\Omega)$ is \emph{absolutely continuous} in time, denoted $\rho \in AC(0,1;\P(\Omega) )$,  if there exists $w \in L^1(0,1)$ so that $\dW(\rho(\cdot, t_0), \rho(\cdot, t_1)) \leq \int_{t_0}^{t_1} w(s) \rd s$ for all $0 < t_0 \leq t_1 < 1$. The PDE constraint   (\ref{BB2}-\ref{BB3p5}) holds in the   duality with smooth test functions on $\Rd \times [0,1]$, i.e. for all $f \in C^\infty_c(\Rd \times [0,1])$,
\begin{align*}
\int_0^1  \int_\Omega \left[ \partial_t f(x,t) \rho(x,t) + \grad f(x,t) \cdot  v(x,t) \rho(x,t) \right] \rd x \rd t + \int_\Omega f(x,0)\rho_0(x) - f(x,1)\rho_1(x)\rd x = 0\,.
\end{align*}

This dynamic reformulation reduces the problem of finding the Wasserstein distance between any two measures to identifying  the curve  in $\P(\Omega)$ that connects them with minimal kinetic energy. However, the objective function   (\ref{BB1}) is not strictly convex, and the PDE constraint (\ref{BB2}) is nonlinear. For these reasons, in Benamou and Brenier's original work, they restrict their attention to the case $\rho(\cdot, t) \in \P_{ac}(\Omega)$ and introduce the momentum variables $m = v \rho$,  in order to rewrite (\ref{BB1}) as
\begin{eqnarray}\label{BB4}
\dW^2(\rho_0,\rho_1)  =  \min_{(\rho, m) \in \C_1}  \int_0^1 \int_\Omega \Phi(\rho(x,t), m(x,t))\rd x \rd t,
\end{eqnarray}
where
\begin{align} \label{Phidef}
\Phi(\rho, m) = \left\{  \begin{array}{cc}  \frac{\|m \|^2}{ \rho} & \textrm{ \hspace{-1.45cm} if } \rho>0\,, \\ 0 & \textrm{\hspace{0.2cm} if } (\rho, m)=(0,0) \,,
\\ +\infty & \textrm{ \hspace{-1cm} otherwise\,,}  \end{array} \right.
\end{align}
and $(\rho,m) \in AC(0,1; \P_{ac}(\Omega)) \times L^1(0,1;L^2(\rho^{-1}))$ belong to the constraint set $\C_1$ provided that
\begin{align*} %\label{BB5}
 \partial_t \rho + \nabla \cdot m = 0 &\text{ on } \Omega \times[0,1]
 \\ m \cdot \nu = 0   &\text{ on } \partial \Omega \times [0,1] . %\label{BB7}
 \\ \rho(\cdot,0) = \rho_0, \ \rho(\cdot, \Dtime) = \rho_{\Dtime} &\text{ on } \Omega . %\label{BB6}
\end{align*}
After this reformulation, the integral functional  
\begin{align} \label{integralJ}
(\rho,m) \mapsto \int_0^1 \int_\Omega \Phi(\rho,m)
\end{align}
 is strictly convex along linear interpolations and lower semicontinuous with respect to weak-* convergence \cite[Example 2.36]{ambrosio2000functions}, and the PDE constraint is linear.
As an immediate consequence, one can conclude that minimizers are unique.
Furthermore, for any $\rho_0, \rho_1 \in \P_{ac}(\Omega)$, a direct computation shows that the minimizer $(\bar \rho, \bar m)$   is given by  the Wasserstein geodesic from $\rho_0$ to $\rho_1$,
\begin{align} \label{W2geoform}
&\bar \rho(x,t) =T_t \# \rho_0 , \  \bar v(x,t) = T \circ T_t^{-1}(x)  - T_t^{-1}(x) , \ \bar m  =  \bar v  \bar \rho , \ \text{ for }  T_t(x) := (1-t)x + t T(x),
\end{align}
where $T$ is the optimal transport map from $\rho_0$ to $\rho_1$. (See \cite{Villani03a, AGS,santambrogio2015optimal} for further background on optimal transport.)
Consequently, given any minimizer $(\bar \rho, \bar m)$ of (\ref{BB4}), we can recover the optimal transport plan $T$   via the following formula:
\begin{align}
T(x) =x + \bar v(x,0)= x + \bar m(x,0)/ \bar \rho(x,0) .
\end{align}

Building upon Benamou and Brenier's  dynamic reformulation of the Wasserstein distance, one can also consider a dynamic reformulation of the JKO scheme \eqref{eqn:JKO-000}.  In particular, substituting (\ref{BB4}) in \eqref{eqn:JKO-000} leads to the following dynamic JKO scheme:
\begin{customproblem}{1}[Dynamic JKO]  \label{dynamicJKO}
 Given $\tau >0$, $\energy$, and $\rho_0$, solve the constrained optimization problem,
\begin{align*} %\label{BBobjfn}
\inf_{(\rho,m) \in \C}  \int_0^1 \int_\Omega \Phi(\rho(x,t), m(x,t))\rd x \rd t + 2\tau \energy(\rho(\cdot,1)) ,
\end{align*}
where $(\rho,m) \in AC(0,1; \P_{ac}(\Omega)) \times L^1(0,1;L^2(\rho^{-1}))$ belong to the constraint set $\C$ provided that
\begin{align}
 \partial_t \rho + \nabla \cdot m = 0 &\text{ on } \Omega \times[0,1] ,   \quad m \cdot \nu = 0   \text{ on } \partial \Omega \times [0,1] , \quad \text{ and } \rho(\cdot,0) = \rho_0  \text{ on } \Omega . \label{BB8}
\end{align}
\end{customproblem}
\noindent We emphasize that the requirement $\rho(x,t) \in \P_{ac}(\Omega)$ for all $ t\in [0,1]$ ensures that $\rho(x,t) \geq 0$.

\begin{remark}[Wasserstein geodesics] \label{W2georemark}
Note that for any $\rho_1 \in \P_{ac}(\Omega)$, we may take
\begin{align} \label{Erho1}
\energy(\rho(\cdot, 1)) = \mathcal{G}_{\rho_1}(\rho(\cdot, 1)):= \begin{cases} 0 &\text{ if } \rho(\cdot, 1) = \rho_1 , \\ +\infty &\text{ otherwise,} \end{cases}
\end{align}
in which case Problem \ref{dynamicJKO} reduces to the Benamou-Brenier formulation of the Wasserstein distance (\ref{BB4}). Consequently, the numerical method we develop for Problem \ref{dynamicJKO} offers, as a particular case, a provably convergent numerical method for computing the Wasserstein geodesic and Wasserstein distance between $\rho_0$ and $\rho_1$.
On one hand, there are many alternative methods for computing Wasserstein geodesics in Euclidean space. Indeed the many algorithms described in the introduction for computing the Wasserstein distance also provide an optimal transport plan, which can be linearly interpolated to give the Wasserstein geodesic \cite{GM96, BFO14, cuturi13, Benamou15, CPSV16, PC18}. On the other hand, our method is distinguished because it could be more naturally extended to variants of the Wasserstein metric built on the Benamou-Brenier formulation \cite{maas2011gradient,erbar2016gradient,carrillo2020landau}, as well as to Wasserstein geodesics on non-Euclidean manifolds, where the geodesic equations on the underlying manifold may no longer be explicit, so that one cannot pass directly from the optimal transport plan to the Wasserstein geodesic.
\end{remark}
 
\begin{remark}[existence and uniqueness of minimizers] \label{uniquenessmincont}
 If the underling domain $\Omega$ is convex and the energy $\energy$ is proper, lower semicontinuous, coercive, and $\lambda$-convex along generalized geodesics, and also satisfies  $ \{ \mu : \energy(\mu) < +\infty \} \subseteq \P_{ac}(\Omega)$, then,  for  $\tau>0$ sufficiently small, there exists a unique solution  to Problem \ref{dynamicJKO} \cite[Theorem 4.0.4, Theorem 8.3.1]{AGS}.   In particular, these assumptions are satisfied by the energy $\G_{\rho_1}$ (\ref{Erho1}), as well as by the drift-diffusion-interaction energy from the introduction (\ref{eqn:energy}), for $U$ as in equation (\ref{eqn:000}), $V, W \in C^2(\Omega)$. (See, for example, \cite[Section 9.3]{AGS} or \cite{craig2017nonconvex} for more general conditions on $U$, $V$,   $W$.)
\end{remark}

Thus, if we denote by $(\bar{\rho},\bar{m})$ the minimizer of Problem \ref{dynamicJKO}, then for $\tau>0$ sufficiently small,  the \emph{proximal map},
\[ J_\tau(\rho_0) := \brho(\cdot, 1) \ ,  \]
is well defined for all   $\rho_0 \in D(\energy)$. Furthermore, the energy decreases under the proximal map,
\begin{align} \label{proxEnergyDecrease}
\energy(J_\tau(\rho_0)) \leq \energy(\rho_0) ,
\end{align}
which can be seen by comparing the value of the objective function at the minimizer $(\brho,\bm)$ to the value of the objective function at  $(\rho(x,0), 0) \in \mathcal{C}$ and using that $\Phi(\rho,m) \geq 0$.

Given $\rho_0 \in D(\energy)$, if we recursively define the \emph{discrete time gradient flow sequence}
\begin{align} \rho^n_\tau = J_\tau(\rho^{n-1}_\tau) , \ \text{ for all } n \in \mathbb{N} ,   \label{discretegradflow}
\end{align}
then, taking $\tau = t/n$,  $\rho^n_\tau$ converges to $\rho(x,t)$, the gradient flow of the energy $\energy$ with initial data $\rho_0$ at time $t$, and under mild regularity assumptions on $\rho_0$, we have  
 \begin{align}  \dW(\rho_n(\cdot), \rho(\cdot, t)) \leq C \tau.  \label{firstorderJKO}
 \end{align}
 In this way, the classical JKO scheme provides a \emph{first order} approximation of the gradient flow  \cite[Theorem 4.0.4]{AGS}. In our numerical simulations, we observe that this discretization error dominates other errors in our numerical method; see sections \ref{sec:PME} and \ref{sec:FP}. For this reason, we   introduce the following new scheme,   inspired by the Crank-Nicolson method.
\begin{customproblem}{2}[Crank-Nicolson Inspired Dynamic JKO]  \label{hodynamicJKO}
 Given $\tau >0$, $\energy$, and $\rho_0$, solve the constrained optimization problem,
\begin{align*} %\label{BBobjfnb}
\inf_{(\rho,m) \in \C}  \int_0^1 \int_\Omega \Phi(\rho(x,t), m(x,t))\rd x \rd t + \tau \energy(\rho(x,1)) + \tau \int_\Omega \frac{\delta \energy}{\delta \rho}(\rho(x,0)) \rho(x,1) \rd x ,
\end{align*}
where $(\rho,m) \in AC(0,1; \P_{ac}(\Omega)) \times L^1(0,1;L^2(\rho^{-1}))$ belong to the constraint set $\C$ provided that
\begin{align*}
 \partial_t \rho + \nabla \cdot m = 0 &\text{ on } \Omega \times[0,1] ,   \quad m \cdot \nu = 0   \text{ on } \partial \Omega \times [0,1] , \quad \text{ and } \rho(\cdot,0) = \rho_0  \text{ on } \Omega . %\label{BB8b}
\end{align*}
\end{customproblem}
In section \ref{sec:FP}, we provide numerical examples comparing the above method to the classical JKO scheme from Problem \ref{dynamicJKO}, illustrating that it achieves a higher order rate of convergence in practice (see Figure \ref{fig:convg}, Figure \ref{fig:convgsmooth}, and Figure \ref{fig:FK1D-conv}), in spite of the fact that that it lacks the energy decay property of Problem 1. Under what conditions a higher order analogue of inequality (\ref{firstorderJKO}) holds for the new scheme is an interesting open question that we leave to future work, as the main goal of the present work is the development of fully discrete numerical methods for computing minimizers of Problem \ref{dynamicJKO} and \ref{hodynamicJKO}. By iterating either of these minimization problems, as in equation (\ref{discretegradflow}), we obtain a   numerical method for simulating Wasserstein gradient flows.

\subsection{Fully discrete JKO}
We now turn to the discretization of the dynamic  JKO scheme, Problem \ref{dynamicJKO}, and the Crank-Nicolson inspired scheme, Problem \ref{hodynamicJKO}. We begin by noting that the Crank-Nicolson inspired Problem \ref{hodynamicJKO} can be rewritten in the same form as Problem \ref{dynamicJKO} by considering the energy
\begin{align} \label{hoenergy}
\mathcal{H}_{\rho_0}(\rho) := \frac{1}{2} \energy(\rho(x,1)) + \frac{1}{2} \int_\Omega \frac{\delta \energy}{\delta \rho}(\rho(x,0)) \rho(x,1) \rd x .
\end{align}
Using this observation, we will now describe our discretization of both problems simultaneously.

\subsubsection{Discretization of functions and domain} \label{discretefundom}
Given an $n$ dimensional hyperrectangle $S = \Pi_{I=1}^{n} [a_I,b_I]   \subseteq \R^{n}$, we  discretize it as a union of cubes $Q_i$, $i \in \mathbb{N}^n$, where in the $l$th direction, we suppose there are $N_l$ intervals of spacing  $({ \bf \Delta z})_l = (b_l-a_l)/N_l$:
\begin{align*} %\label{discretedomaindef}
S  = \bigcup_{i : Q_i \subseteq S} Q_i   , \quad Q_{i} := \{ (z_1, z_2, \dots, z_n)  \in \R^n  :  z_l\in [(i_l-1) ({\bf  \Delta z})_l, i_l ({ \bf \Delta z})_l]  \ \forall l = 1, \dots, n \}.
\end{align*}
 \emph{Piecewise constant functions}  with respect to this discretization are given by
\begin{align*} %\label{piecewiseconstdef}
 f^h := \sum_{i : Q_i \subseteq S}   f_{i} 1_{Q_{i}}, \text{ for } f_{i} \in \R  \text{ and }1_{Q_{i}}(z) = \begin{cases} 1 \text{ if }z \in Q_{i} \\ 0 \text{ otherwise.} \end{cases}
 \end{align*}

To discretize Problem \ref{dynamicJKO},  we take $S = \overline{\Omega} \times [0,1] \subseteq \R^{d +1}$, where $\overline{\Omega} = \Pi_{i=1}^{n} [a_i,b_i]  $. For any $i \in   \mathbb{N}^{d+1}$,   write $i = (j,k)$, for the spatial index $j \in   \mathbb{N}^d$ and the temporal index $k \in  \mathbb{N}$. We let $ N_x  \in \mathbb{N} $  denote the number of intervals in each spatial direction and $N_t \in \mathbb{N}$ denote the number of intervals in the temporal direction.  Take  ${\bf \Delta z} = ({\bf\Delta x}, \Delta t)$ for $({\bf \Delta x})_l = (\Delta x)   >0$ for all $l = 1, \dots, d$ and   $\Delta t  >0$.

We consider piecewise constant approximations $(\rho^h,m^h)$ of the functions $(\rho,m)$, with coefficients denoted by $(\rho_{j,k}, m_{j,k})$. For any $ (\rho,m) \in C(\overline{\Omega} \times [0,1])$, one such approximation is the \emph{pointwise piecewise} approximation $(\hat{\rho}^h, \hat{m}^h)$, obtained by defining the coefficients $(\hat{\rho}_{j,k}, \hat{m}_{j,k})$ to be the value of $(\rho,m)$ on a regular grid of spacing $(\Delta x) \times (\Delta t)$:
\begin{align} \label{pointwisepiecewise}
 \hat{\rho}_{j,k} := \rho(x_{j}, t_k), \  \hat{m}_{j,k} := m(x_j, t_k) , \qquad    
  (x_{j}, t_k) &= (\mathbf{\hat{x}} + (j-\mathbf{1})(\Delta x), \hat{t} + (k-1)(\Delta t)) \nonumber \\
   \mathbf{{\hat{x}}}  &\in \Pi_{l = 1}^d [0,  \Delta x ] , \ \hat{t} \in [0, \Delta t ] .
 \end{align}
 where $\mathbf{1} = [1,1, \dots, 1]^t \in \mathbb{N}^d$.
% Another such approximation is obtained by defining the coefficients $(\check{\rho}_{j,k}, \check{m}_{j,k})$ via a finite volume scheme in space, at the the minimum of the time discretization:
% \begin{align} \label{finitevolumepiecewise}
%\hat{\rho}_{j,k} := \frac{1}{|Q_{j,k}|} \int_{Q_{j,k}} \rho(x,t_k)dx \text{ for } t_k  =   k (\Delta t).
%\end{align}
 Note that, whenever $(\rho,m) \in C(\overline{\Omega} \times [0,1])$, we have that $(\hat{\rho}^h, \hat{m}^h)$ converges to $(\rho,m)$ uniformly. 
 
 \subsubsection{Discretization of energy functionals} \label{energydiscretizationsection}
Next, we approximate the energy functionals by discrete energies $\energy^h$, beginning with energies of the form  (\ref{eqn:energy}). Given a piecewise constant function $\rho^h$   with coefficients $\rho_j$,  
\begin{align} \label{Ehgradflow}
 \F^h(\rho_j) := \sum_{j} \left( U( \rho_j) + V_j \rho_j \right) \dx+ \frac12 \sum_{j,l}   W_{j, l} \rho_j \rho_{l}   (\Delta x)^{2d} ,
\end{align}
where $V^h(x) = \sum_{j} V_j 1_{Q_j}(x)$ is a piecewise constant approximation of $V(x)$   and $W^h(x,y) = \sum_{j,l} W_{j,l} 1_{Q_j}(x)1_{Q_j}(y) $ is a piecewise constant approximation of $W(x-y)$. Here $W_{j,l} = W(|x_j - x_l|)$ symmetric, i.e., $W_{j,l} = W_{l,j}$.

Likewise, for energies of the form (\ref{eqn:energy}), we consider the following discretization of the energy $\mathcal{H}_{\rho_0}$ from equation (\ref{hoenergy}) for the Crank-Nicolson inspired scheme, Problem \ref{hodynamicJKO}, 
\begin{align} \label{Hhgradflow}
 \mathcal{H}^h_{\rho_0}(\rho_j) := \frac12 \F^h(\rho_j) + \frac12 \sum_{j} \left( U'( (\rho_0)_j) + V_j  +\sum_l W_{j, l} (\rho_0)_l   (\Delta x)^{d}  \right) \rho_j (\Delta x)^{d} \,.
\end{align}

Finally, to compute Wasserstein geodesics between two measures $\rho_0, \rho_1 \in \P_{ac}(\Omega)$, we consider a discretization of the energy $\mathcal{G}_{\rho_1}$ from equation (\ref{Erho1}). Given a piecewise constant approximation $\rho_1^h$ of $\rho_1$ and $\delta \geq 0$,  define
\begin{align} \label{Ehrho1}
\G_{\rho_1}^h(\rho_j) := \begin{cases} 0 &\text{ if } \sum_j |\rho_j - (\rho^h_1)_j |^2 (\Delta x)^d \leq \delta^2  \\ +\infty \,, &\text{ otherwise.} \end{cases}
\end{align}

\subsubsection{Discretization of derivative operators} \label{discretederivativesection}
Let $D^h_t \rho^h $ and $D^h_x m^h$ denote the discrete  time derivative and spatial divergence on $\Omega \times [0,1]$ and let $\nu^h$ denote the discrete outer unit normal of $\Omega$. (See Hypothesis \ref{discreteophyp} for the precise requirements we impose on each of these discretizations). For example, in one dimension we may choose a centered difference in space and a forward Euler method in time,
\begin{align} \label{forwardEuler}
 D^h_t \rho_{j,k} = \frac{\rho_{j,k+1} - \rho_{j,k} }{\Delta t}, \quad  D^h_x m_{j,k}  =  \frac{m_{j+1,k} - m_{j-1,k}  }{2  \Delta x } .
 \end{align}
or a Crank-Nicolson method,
\begin{align} \label{crankspace}
D^h_t \rho_{j,k} = \frac{\rho_{j,k+1} - \rho_{j,k} }{\Delta t},  \quad  D^h_x m_{j,k}  =  \frac{m_{j+1,k} - m_{j-1,k} + m_{j+1,k+1} - m_{j-1,k+1} }{4 \Delta x } .
 \end{align}
We compute these discretizations of the derivatives at the boundary by extending $m_{j,k} $ to be zero in the direction of the outer unit normal vector.
As we can only expect  these approximations of the temporal and spatial derivatives to hold up to an error term, we relax the equality constraints from (\ref{BB8}) in the following discrete dynamic JKO scheme.  

\subsubsection{Discrete dynamic JKO}
The discretizations described in the previous sections lead to a  fully discrete dynamic JKO problem:
\begin{customproblem}{$1_{j,k}$}[Discrete Dynamic JKO]  \label{ddynamicJKO}
 Fix $\tau, \delta_1, \delta_2, \delta_3, \delta_4 >0$, $\energy^h$, and $\rho_0^h$. Solve the constrained optimization problem,
\begin{align} \label{ddynamicobj}
\inf_{(\rho_{j,k},m_{j,k}) \in \C^h}   \sum_{j} \sum_{k } \Phi( \rho_{j,k}, m_{j,k})  \dx \Delta t + 2 \tau \energy^h(\rho_{j, N_t}) ,
\end{align}
where $(\rho_{j,k},m_{j,k})$ belong to the constraint set $\C^h$ provided that for all $j,k$,
\begin{align}
\sum_{j,k} |D^h_t \rho_{j,k} + D^h_x  m_{j,k}|^2 {\dx (\Delta t)}  &\leq \delta_1^2 ,  & \  \sum_{j \in \partial \Omega , k} |m_{j,k} \cdot \nu_{j} |^2  (\Delta x)^{d-1} (\Delta t) &\leq  \delta_2^2 ,  \label{BB81} \\
\  \sum_k | \sum_{j }\rho_{j,k}  \dx - \sum_j (\rho_0^h)_j \dx|^2 (\Delta t) &\leq \delta_3^2  , \ &\sum_j |\rho_{j,0} - (\rho_0^h)_j |^2 \dx &\leq \delta_4^2 .  \label{BB82}
\end{align}
\end{customproblem}

\noindent The inequalities (\ref{BB81}) enforce the PDE constraint and the boundary condition; the inequalities (\ref{BB82}) enforce the mass constraint and the initial conditions.  Recall that, by definition of $\Phi$ in equation (\ref{Phidef}), $\Phi(\rho_{j,k}, m_{j,k})< +\infty$ only if $\rho_{j,k}$ is nonnegative. Consequently, if a minimizer $\rho_{j,k}$ exists, it must be nonnegative. 

\begin{remark}[relaxation of PDE constraints] \label{relaxpderemark}
A key element of our numerical method  is that we relax  the equality constraint (\ref{BB8}) at the fully discrete level. This reflects the fact that even an exact solution of the continuum PDE  will only satisfy the discrete   constraints (\ref{BB81}-\ref{BB82}) up to an error term depending on the order of the finite difference  operators.
 
We allow the choice of $\delta_i$ to vary for each of the above constraints. However, when the desired exact solution is sufficiently smooth, the optimal choice of $\delta_i$ for a second order discretization of the spatial and temporal derivatives is
\[ \delta_1 \sim (\Delta x)^2 + (\Delta t)^2  \tau \quad \text{ and } \quad \delta_2,  \delta _3,  \delta_4 \sim (\Delta x)^2 , \]
where $\tau >0$ is the size of the timestep in the outer time discretization; see equations (\ref{eqn:JKO-000}-\ref{2nd}).  As we will demonstrate in Figure \ref{fig:optimaldelta} of our numerics section, relaxing the PDE constraint accelerates convergence to a minimizer of the fully discrete   Problem \ref{ddynamicJKO} without any loss of accuracy with respect to the exact continuum solution.

Finally, note that while the discrete PDE constraint (\ref{BB81}) automatically enforces the mass constraint  up to order $\delta_1^2 + \delta_2^2$,  we  choose to impose the mass constraint separately via the first   equation in (\ref{BB82}). This leads to better performance in examples where the exact solution is not smooth enough to satisfy the discrete PDE constraint up to a high order of accuracy but imposing a stricter mass constraint   leads to a higher quality numerical solution; see Figure \ref{fig:palace}. 
\end{remark}

Under sufficient hypotheses on the discrete energy $\energy^h$ and the initial data $\rho_0^h$, minimizers of Problem \ref{ddynamicJKO} exist; see Theorem \ref{minimizersexist}. Furthermore, this discrete dynamic JKO scheme preserves the energy decreasing  property of the original JKO scheme. To see this, note that, given an energy $\energy^h$, time step $\tau>0$, and initial data $(\rho_0^h)_j$ we may define the fully discrete proximal map by
\begin{align*}
J_\tau^h((\rho_0)_j): = \brho_{j,N_t} ,
\end{align*}
where $(\brho_{j,k}, \bm_{j,k})$ is any minimizer of Problem \ref{ddynamicJKO}. Independently  of which minimizer is chosen, we have
\begin{align*} %\label{hproxEnergyDecrease}
\energy^h(J_\tau^h((\rho_0)_j) \leq \energy^h((\rho_0)_j) ,
\end{align*}
which can be seen by comparing the value of the objective function at the minimizer $(\brho_{j,k},\bm_{j,k})$ to the value of the objective function at  $(\rho_{j,k}, m_{j,k}) = ((\rho_0)_j, 0) \in \mathcal{C}$ and using the fact that  $\Phi  \geq 0$.
Furthermore, by iterating the fully discrete proximal map, we may  construct a \emph{fully discrete gradient flow sequence}
\begin{align*} %\label{fullydiscretegradflow}
(\rho^n_\tau)_{j} = J^h_\tau((\rho^{n-1}_\tau)_{j})  \text{ for all } n \in \mathbb{N}, \quad \rho^0_\tau = \rho_0^h .
\end{align*}
In analogy with the continuum case, we will use this fully discrete JKO scheme to simulate gradient flows.  (See Algorithm \ref{alg:nonlinearJKO}.)

\subsection{Primal dual algorithms for fully discrete JKO} \label{proxsplitsec}
In order to find minimizers of Problem \ref{ddynamicJKO}, we apply a primal dual operator splitting method.
Since the constraints in Problem \ref{ddynamicJKO} are linear inequality constraints, we may rewrite them in the form  $\|\tilde\Amat_i {u} - \tilde b_i\|_2 \leq \delta_i$ for $i = 1,2,3,4$, where ${u} = (\vec{\rho}, \vec{m})$, and $\vec{\rho}$ and $\vec{m}$ are vector representations of the matrices $\rho_{j,k}$ and $m_{j,k}$. (See the Appendix \ref{numericalimplementation} for explicit formulas for $\tilde \Amat_i$ and $\tilde b_i$, in one spatial dimension). Similarly, we may rewrite the first term of the objective function  (\ref{ddynamicobj}) in terms of ${u}$, defining
\begin{align*}
\Phi({u})=  \sum_{k} \sum_{j} \Phi( {\rho}_{j,k}, m_{j,k}) \dx \Delta t .
\end{align*}

We consider two cases for the energy term in the objective function. When the energy is of the form $\G^h_{\rho_1}$, as in equation (\ref{Ehrho1}), we reframe the problem by removing the energy   from the objective function   and adding $ \sum_j |\rho_{j, N_t}- (\rho_1^h)_j |^2 (\Delta x)^d \leq \delta_5^2 $ to the constraints \eqref{BB81}--\eqref{BB82}, denoting $\| \Amat_i {u} -  b_i\|_2 \leq \delta_i$, for $i = 1,2,3,4,5$, as the modified constraints. On the other hand, when the energy is of the form  (\ref{Ehgradflow}) or (\ref{Hhgradflow}), we rewrite it in terms of ${u}$ as
\begin{align} \label{discreteenergydef}
F({u}) =   \sum_j \left({U}({\rho}_{j,N_t}) + V_j {\rho}_{j,N_t} \right) \dx +\frac{1}{2} \sum_{j,l} \left(   W_{j,l} {\rho}_{j,N_t} {\rho}_{l,N_t} \right) (\Delta x)^{2d}   ,\\
H({u}) =   \frac12 F(u) + \frac 12 \sum_j \left({U'}({\rho}_{j,0}) + V_j + \sum_{l} \  W_{j,l} {\rho}_{l,0}  (\Delta x)^d \right) {\rho}_{j,N_t} (\Delta x)^d .\nonumber
\end{align}
In particular, if we let $\Smat$ be the selection matrix
\[ \Smat:\R^N \to \R^{N_x} : {u} \mapsto \rho_{j,N_t} , \]
then $F({u}) = \F^h(\Smat {u})$ and $H(u) = \mathcal{H}_{\rho_0}^h(\Smat{u})$, where $\F^h$ and $\mathcal{H}_{\rho_0}^h$ are defined in \eqref{Ehgradflow} and \eqref{Hhgradflow}, respectively.

This leads to the following two optimization problems:
\begin{customproblem}{3(a)}  \label{3a}
$\min_{{u}} \Phi({u}) + \indi_{ {\bf{ \delta}}}(\Amat {u}),$ \hspace{2.03cm} $\indi_{ {\bf \delta}} ( \Amat {u})  = \left\{ \begin{array}{cl} 0 & \|  \Amat_i {u} -   b_i\|_{2} \leq \delta_i , \ i = 1, \dots ,5 \\ +\infty , & \text{ otherwise.} \end{array} \right.$
\end{customproblem}
\begin{customproblem}{3(b)}  \label{3b}
$\min_{{u}} \Phi({u}) + 2 \tau E({u}) + \indi_{\tilde{\bf \delta}} (\tilde\Amat {u})$, \quad $\indi_{\tilde {\bf \delta}} (\tilde\Amat {u})  = \left\{ \begin{array}{cl} 0 & \|\tilde\Amat_i {u} - \tilde b_i\|_{2} \leq \tilde \delta_i , \ i = 1,\dots,4 \\ +\infty , & \text{ otherwise.} \end{array} \right.$
\end{customproblem}
To compute the Wasserstein distance, we solve Problem \ref{3a}, and to compute the gradient flow of an energy, we iterate Problem \ref{3b} $O(\frac{1}{\tau})$ times, for either $E(u) = F(u)$ (classical JKO) or $E(u) = H(u)$ (Crank-Nicolson inspired scheme).

Primal-dual methods for solving optimization problems in which the objective function is the sum of two convex functions, as in Problem \ref{3a}, are widely available   \cite{CP11}. However, analogous methods for optimizations problems in which the objective function is the sum of three convex functions, as in Problem \ref{3b}, have only recently emerged \cite{DavisYin17, Yan17}. In particular, in Algorithm \ref{alg:dis}, for Problem \ref{3a}, we use Chambolle and Pock's well-known primal dual algorithm, and in Algorithm \ref{alg:nonlinear}, for Problem \ref{3b}, we use Yan's recent extension of this algorithm to objective functions with three convex terms. Both algorithms offer an extended range of primal and dual step sizes $\lambda$ and $\sigma$ and low per-iteration complexity, due to the sparseness of  $\Smat$, $\Amat$, and $\tilde{\Amat}$. 
Note specifically that the success of Algorithm 1 depends on the ease of computing the proximal operators related to $\phi$ and $\indi_\delta$, and therefore if we simply group the additional energy term in Problem \ref{3b} to either $\phi$ or $\indi_\delta$, it would violate such property. Instead, we shall consider $E(u)$ as a separate term and take advantage of its smoothness, as shown in Algorithm 2. 
Finally, in Algorithm \ref{alg:nonlinearJKO}, we describe how Algorithm \ref{alg:nonlinear} can be iterated to approximate the full JKO sequence and, consequently, solutions of a range of nonlinear partial differential equations of Wasserstein gradient flow type.

 \begin{algorithm}[h]
\caption{Primal-Dual for Wasserstein distance}\label{alg:dis}
\SetAlgoLined
\KwIn{${u}^{0}$, $\phi^{0}$, $\text{Iter}_{max}$,   $\lambda, \sigma >0$}
\KwOut{${u}^*=\big( {\rho}^*,  {m}^*\big)$ and the Wasserstein distance $\Phi({u}^*)^{1/2}$}

\BlankLine
Let $\bar{{u}}^{0}={u}^{0}$ and $l=0$; \\
\While{$l< \text{Iter}_{max}$}{
    \Repeat{stopping criteria are achieved}{
    $\phi^{(l+1)} = \prox_{\sigma \indi_\delta^*} (\phi^{(l)} + \sigma \Amat \bar{{u}}^{(l)})$,
    \\ ${u}^{(l+1)} = \prox_{\lambda \Phi} ({u}^{(l)} - \lambda \Amat^T \phi^{(l+1)})$,
    \\ $\bar{{u}}^{(l+1)} = 2{u}^{(l+1)} - {u}^{(l)}$\,,
    }
}
    ${u}^*={u}^{(l+1)}$
%We obtain the solution ${u}^* = {u}^{(l+1)}$ and the Wasserstein distance $F({u}^*)^{1/2}$.
\end{algorithm}

\begin{algorithm}[h]
\caption{Primal-Dual for one step of dynamic JKO}\label{alg:nonlinear}
\SetAlgoLined
\KwIn{${u}^{0}$, $\phi^{0}$, $\text{Iter}_{\text{max}}$, $\lambda, \sigma, \tau  >0$}
\KwOut{${u}^*$, $ \phi^*$  }

\BlankLine

Let $\bar{{u}}^{0}={u}^{0}$ and $l = 0$;\\
\While{$l <\text{Iter}_\text{max}$}{
    \Repeat{stopping criteria is achieved}{
    $\phi^{(l+1)} = \prox_{\sigma i_\delta^*} (\phi^{(l)} + \sigma \tilde{\Amat} \bar{{u}}^{(l)})$,
    \\ ${u}^{(l+1)} = \prox_{\lambda {\Phi}} ({u}^{(l)} - \lambda  \nabla E( {u}^{(l)}) - \lambda \tilde{\Amat}^t \phi^{(l+1)})$,
    \\ $\bar{{u}}^{(l+1)} = 2{u}^{(l+1)} - {u}^{(l)} + \lambda  \nabla E( {u}^{(l)})- \lambda  \nabla E( {u}^{(l+1)})$\,,

}
${u}^* = {u}^{(l+1)}$ \\
$\phi^* = \phi^{(l + 1)}$}
\end{algorithm}

To initialize both algorithms, we choose $\phi^0$ and $m^0$ to be zero vectors, and for $\rho^0$, we let its components at the initial time (i.e., $k = 0$)   be $\rho_0(x)$ evaluated on an equally spaced grid of width $\Delta x$, and other times to be zero. The stopping criteria consists of checking the PDE constraint \eqref{BB81}--\eqref{BB82} along with the convergence monitors: 
\begin{eqnarray}
&&\frac{|F({u}^{(l)})-F({u}^{(l-1)})|}{|F({u}^{(l)})|}<\epsilon_1, \label{cri02}\\
&& \max \left\{ \frac{\|{u}^{(l)}-{u}^{(l-1)}\|}{\|{u}^{(l)}\|}, ~ \frac{\|\phi^{(l)}-\phi^{(l-1)}\|}{\|\phi^{(l)}\|} \right\} <\epsilon_2 . \label{cri03}
\end{eqnarray}

The proximal operator, which appears in Algorithms \ref{alg:dis} and \ref{alg:nonlinear}, is defined by
\[
\prox_h(x) = \text{argmin}_u \left\{ \frac{1}{2} \|u-x\|^2 + h(u) \right\}\,.
\]
For both $h = \sigma i_{\bf \delta}^*$ and $h = \lambda \Phi$, there are explicit formulas for the proximal operators.
By Moreau's identity, we may write $\prox_{\sigma i_{\bf \delta}^*}(x)$ in terms of  projections onto balls of radius $\delta_i$ centered at $b_i$ for the $i$th portion of vector $x$: 
\begin{equation}
 \label{proxiB}
\prox_{\sigma \indi^*}(x) =   x - \sigma\proj_{B_\delta}(x/\sigma) \, ,  \qquad
 \proj_{B_\delta} (x) = \begin{cases} x_i & \|x_i-b_i\|_2 \leq \delta_i \, , \\ \delta \frac{ x_i-b_i}{\|x_i-b_i\|_2}+b_i & \text{ otherwise,}  \end{cases}  i = 1, 2, 3, 4\,.
\end{equation}
For the proximal operator of $\Phi$, as shown by Peyr\'e, Papadakis, and Oudet \cite[Proposition 1]{PPO14},
\begin{equation} \label{proxJ}
\prox_{\lambda \Phi} ({u}) = \big(\prox_{\lambda \varphi}(\rho_{j,k},m_{j,k})\big)_{j,k} \quad \text{ for }  \quad
\prox_{\lambda \varphi}(\rho,m) =  \begin{cases} (\rho^*, m^*) &  \text{ if $\rho^*>0$,} \\ (0,0) & \text{ otherwise,} \end{cases}
\end{equation}
where $\rho^*$ is the largest real root of the cubic polynomial equation
$P(x) := (x-\rho)(x+\lambda)^2-\frac{\lambda}{2}|m|^2 = 0,$
and $m^*$ can be obtained by $m^* =  \rho^*m/(\rho^*+\lambda)$. By computing the proximal operator exactly, our primal dual method is positivity preserving, respecting a key property of the original Problems \ref{dynamicJKO} and \ref{ddynamicJKO}.   

As the computations of both proximal operators \eqref{proxiB}, \eqref{proxJ} are {\it component-wise}, they can easily be {\it parallelized}. Likewise, the computation of the gradient $\nabla E$ is also component-wise:
\begin{align*}
(\nabla_{{u}} F( {u}))_j =   (U'({\rho}_{j,N_t}) + V_j +   \sum_{l } W_{j,l} \rho_{l,N_t} \dx )  \dx , \\
(\nabla_{{u}} H( {u}))_j = \frac12  (\grad_u F(u))_j +   \frac12 (U'({\rho}_{j,0}) + V_j +   \sum_{l } W_{j,l} \rho_{l,0} \dx )  \dx .
\end{align*}

\begin{remark}[discrete convolution]
As written, the above functionals involves a computation of the convolutions $\sum_{l } W_{j,l} \rho_{l, N_t}$ and $\sum_{l } W_{j,l} \rho_{l,0}$, which can be achieved efficiently using the fast Fourier transform. Note that since the product of the discrete Fourier transforms of two vectors is the Fourier transform of the circular convolution and   the interaction potential $W_{j-k}=W(x_j-x_k)$ is not a periodic function, we need zero-padding for computing the convolution. For the 1D case, we can first use the fast Fourier transform to compute the circular convolution of $\vec{W}=(W_j)_{j=-N_x+2}^{N_x-2}$ and $(\vec{\rho}, ~ (\vec{0})_{N_x-2})$, and then extract the last $N_x-1$ elements, which are the desired convolution $\sum_{k } W_{j-k} \rho_k$ for $1\leq j \leq N_x-1$.
\end{remark}

Embedding Algorithm 2 to the JKO iteration we have the following algorithm for Wasserstein gradient flows.
\begin{algorithm}[h]
\caption{Primal-Dual for JKO sequence}\label{alg:nonlinearJKO}
\SetAlgoLined
\KwIn{$\rho(x,t_0)$, $\text{Iter}_{\text{max}}$, { $\lambda, \sigma, \tau, n >0$}}
\KwOut{$\rho(x,t_k)$ for $0\leq k \leq n$ { and the corresponding energy $\energy(\rho(x,t_k))$ }}

\BlankLine
Given ${u}^{0}$, $\phi^{0}$; \\
\For{$k=1,2,\ldots,n$}{
$u^*, \phi^* = $ \textbf{Algorithm \ref{alg:nonlinear}}($u^0, \phi^0, \text{Iter}_{\text{max}},\lambda, \sigma, \tau$)  \\
$\rho(x,t_k) = \Smat {u}^*$ \\
$\phi^0 = \phi^*$ \\
${u}^0 = \max \left\{{u}^{*} - [\mathsf{1}_{N_t+1}, \mathsf{0}_{N_t+1}]^t \Kron \rho(x, t_{k-1}) +  [\mathsf{1}_{N_t+1}, \mathsf{0}_{N_t+1}]^t \Kron \rho(x, t_{k})  , \quad 0\right\}$. \\
}
\end{algorithm}
Note that line 6 in Algorithm 3 is to construct a better initial guess for $\rho$ at each JKO iteration by applying an extrapolation.

 \begin{remark}[Comparison of our numerical method to previous work] \label{comparisonremark}
Our definition of the indicator function in Problems 3(a) and 3 (b) differs from   previous work, and as a result, our primal-dual algorithm does not require the inversion of the matrix $\Amat \Amat^T$  \cite{PPO14,BB00}, which makes it quite efficient in high dimensions thanks to the sparsity of $\Amat$. A similar approach is taken in a recent preprint \cite{LOG17} to compute the earth mover's distance $W_1$, though, in this context, the earth mover's distance is dissimilar from the Wasserstein distance, since it does not require an extra time dimension and is thus a lower dimensional problem.

A second difference between our method and the approach in previous works is that, since $P(x)$ has at most one strictly positive root, it can be obtained by the general solution formula for cubic polynomials with real coefficients. Therefore, in our numerical simulations, we may compute the proximal operator $\prox_{\lambda \Phi}(u)$ by using this general solution formula, rather than via Newton iteration \cite{PPO14}. As a consequence, our method is truly positivity preserving, as opposed to positivity preserving in the limit as $\Delta x, \Delta t \to 0$.
\end{remark}

We close this section by recalling sufficient conditions  on the primal and dual step sizes $\sigma$ and $\lambda$ that ensure Algorithms \ref{alg:dis} and \ref{alg:nonlinear} converge to minimizers of Problems \ref{3a} and \ref{3b}.

\begin{proposition}[Convergence of Algorithm \ref{alg:dis}, {c.f. \cite{CP11}}]
Suppose $\sigma\lambda<1/\lambda_{max}({\Amat}{\Amat}^t)$ and a minimizer of Problem \ref{3a} exists. Then, as $\rm{Iter_{max}} \to +\infty,$ and $\epsilon_1$, $\epsilon_2 \to 0$ in the stopping criteria \eqref{cri02} \eqref{cri03}, the output ${u}^*$ of Algorithm \ref{alg:dis} converges  to a minimizer of Problem \ref{3a}.
\label{propc1}
\end{proposition}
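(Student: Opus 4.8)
The plan is to observe that the inner loop of Algorithm \ref{alg:dis} is precisely the Chambolle--Pock primal dual iteration applied to Problem \ref{3a}, and to deduce convergence from \cite[Theorem~1]{CP11} once its hypotheses are checked. Writing Problem \ref{3a} as $\min_{u} g(u) + f(\Amat u)$ with $g = \Phi$, $f = \indi_{{\bf\delta}}$, and linear operator $K = \Amat$, the first task is to verify that $f$ and $g$ are proper, convex, and lower semicontinuous. For $g=\Phi$ this follows from the corresponding properties of the continuum integral functional (\ref{integralJ}): each summand $\varphi(\rho,m)=|m|^2/\rho$ (extended by $0$ at $(0,0)$ and $+\infty$ otherwise) is convex and lower semicontinuous, so the finite positively-weighted sum $\Phi$ is as well, and $\Phi$ is finite at, e.g., $(\vec\rho,\vec m)$ with $\vec\rho$ a positive constant and $\vec m = 0$. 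For $f = \indi_{{\bf\delta}}$ this follows because it is the indicator of the intersection of the preimages, under the linear maps $\Amat_i$, of the closed balls of radius $\delta_i$ centered at $b_i$; this is a closed convex set, nonempty by the assumed existence of a minimizer of Problem \ref{3a}.

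Next I would match the iterations. With $F = f$, $G = g$, $K = \Amat$, overrelaxation parameter $\theta = 1$, dual step $\sigma$ and primal step $\lambda$, the Chambolle--Pock scheme reads $\phi^{(l+1)} = \prox_{\sigma f^*}(\phi^{(l)}+\sigma\Amat\bar u^{(l)})$, then $u^{(l+1)} = \prox_{\lambda g}(u^{(l)} - \lambda\Amat^T\phi^{(l+1)})$, then $\bar u^{(l+1)} = 2u^{(l+1)} - u^{(l)}$, which is exactly the inner loop of Algorithm \ref{alg:dis} with $f^* = \indi_{{\bf\delta}}^*$ and $g = \Phi$. The step size condition in \cite[Theorem~1]{CP11}, namely $\sigma\lambda\|\Amat\|^2 < 1$, coincides with the stated hypothesis, since $\|\Amat\|^2 = \lambda_{\max}(\Amat\Amat^t)$. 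It then remains to exhibit a saddle point of the primal dual gap and to handle the stopping criteria. For the former, since a primal minimizer exists and $\Phi$ is continuous on the nonempty interior of its domain, a Fenchel--Rockafellar qualification holds, so strong duality is valid and the dual problem attains its maximum at some $\phi^*$; thus $(u^*,\phi^*)$ is a saddle point and \cite[Theorem~1]{CP11} gives that the full sequence $(u^{(l)},\phi^{(l)})$ converges, with $u^{(l)}\to\bar u$ for some minimizer $\bar u$ of Problem \ref{3a}. For the latter, since $(u^{(l)})$ and $(\phi^{(l)})$ are Cauchy with nonzero limits, the monitors (\ref{cri02})--(\ref{cri03}) and the feasibility residuals (\ref{BB81})--(\ref{BB82}) tend to $0$ along the sequence; hence, given $\eta>0$, choosing $\epsilon_1,\epsilon_2$ small and $\text{Iter}_{\max}$ large forces the terminating index $L$ past any prescribed value while $\|u^{(L)}-\bar u\| < \eta$, which is the asserted convergence.

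The main obstacle is the saddle point step: deducing from the bare assumption ``a minimizer of Problem \ref{3a} exists'' that strong duality holds and the dual is attained, as required to invoke \cite[Theorem~1]{CP11}. I would resolve it via a constraint qualification, exhibiting a feasible pair $(\rho,m)$ at which $\Phi$ is continuous --- possible because any feasible pair can be perturbed so that all $\rho_{j,k}$ are bounded away from $0$ while the linear inequality constraints remain strictly satisfied --- or, alternatively, by appealing to the polyhedral-plus-quadratic structure of the constraint set. The remaining ingredients --- the convexity and lower semicontinuity bookkeeping for $\Phi$ and $\indi_{{\bf\delta}}$, the termwise identification of the iteration with the one in \cite{CP11}, and the diagonal argument for the stopping criteria --- are routine.
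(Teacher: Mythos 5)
Your proposal follows essentially the same route as the paper, which gives no argument beyond the citation to \cite{CP11}: identify the inner loop of Algorithm \ref{alg:dis} as the $\theta=1$ Chambolle--Pock iteration for $\min_u \Phi(u)+\indi_{\bf \delta}(\Amat u)$, note that $\sigma\lambda\,\lambda_{\max}(\Amat\Amat^t)<1$ is exactly their step-size condition since $\|\Amat\|^2=\lambda_{\max}(\Amat\Amat^t)$, and invoke their convergence theorem. Your extra attention to saddle-point existence (strong duality and dual attainment via a Slater-type point with all $\rho_{j,k}>0$ lying strictly inside the $\delta_i$-balls, which under (H1)--(H\ref{smoothhyp}) is supplied by the construction in Proposition \ref{specialelement} since the constraints there are met with value $0<\delta_i$) is a detail the paper glosses over and is the correct way to justify applying \cite{CP11}; only your claim that an \emph{arbitrary} feasible point can always be perturbed to such a strictly feasible one would need this kind of structural input rather than the bare existence of a minimizer.
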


\begin{proposition}[Convergence of Algorithm   \ref{alg:nonlinear}, {c.f. \cite{Yan17}}]
Suppose that the discrete energy $E(u)$ defined in equation (\ref{discreteenergydef}) is proper, lower semi-continuous, convex, and there exists $\beta >0$ such that $\average{u_1-u_2, \nabla_u E(u_1) - \nabla_u E(u_2)} \geq \beta \| \nabla E(u_1) - \nabla E(u_2)\|^2$. Suppose further that   $\sigma\lambda<1/\lambda_{max}(\tilde{\Amat}\tilde{\Amat}^t)$, $\lambda<2\beta$, and a minimizer of Problem \ref{3b} exists. Then, as ${\rm Iter_{max}} \to +\infty$ and $\epsilon_1$, $\epsilon_2 \to 0$, the output $u^*$ converges to a minimizer of Problem \ref{3b}.
\label{propc2}
\end{proposition}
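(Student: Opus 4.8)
The plan is to recognize Problem \ref{3b} as a special case of the three–term minimization problem analyzed by Yan \cite{Yan17},
\[ \min_{u} \ \text{(smooth)}(u) \ + \ \text{(proximable)}(u) \ + \ \text{(proximable)}(Bu), \]
in which the smooth summand has a cocoercive gradient, the two proximable summands have tractable proximal operators, and $B$ is a linear operator; the conclusion then follows by invoking Yan's convergence theorem verbatim. The first step is to make the identification: in Yan's formulation the smooth summand is $2\tau E$, the proximable summand carrying no linear operator is $\Phi$, and the summand composed with a linear operator is $\indi_{\tilde{\bf \delta}}$ precomposed with $\tilde{\Amat}$. That $\Phi$ is proper, lower semicontinuous, and convex was already recorded in Section \ref{sec:dynamicJKO} (see \cite[Example 2.36]{ambrosio2000functions}), and its proximal operator is given in closed form by \eqref{proxJ}. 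The function $\indi_{\tilde{\bf \delta}}$ is the indicator of a finite intersection of preimages, under the linear maps $\tilde{\Amat}_i$, of Euclidean balls; each such set is closed and convex and their intersection is nonempty (it contains, e.g., the constant–in–time extension of $(\rho_0^h,0)$), so this summand is proper, lower semicontinuous, and convex, and its proximal operator reduces to a projection onto a product of balls, computed via Moreau's identity as in \eqref{proxiB}. Finally, $2\tau E$ is convex and differentiable by the standing hypotheses on $E$ together with the smoothness of $U$, $V$, $W$, and the assumed inequality $\average{u_1 - u_2, \nabla E(u_1) - \nabla E(u_2)} \ge \beta \|\nabla E(u_1) - \nabla E(u_2)\|^2$ is exactly the statement that $\nabla E$ is $\beta$–cocoercive (rescaling by $2\tau$ as needed for the summand $2\tau E$).

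The second step is to check that the iteration in Algorithm \ref{alg:nonlinear} is, line for line, Yan's primal–dual three–operator splitting scheme: the dual step $\phi^{(l+1)} = \prox_{\sigma \indi_{\bf \delta}^*}(\phi^{(l)} + \sigma \tilde{\Amat} \bar u^{(l)})$ is a proximal step on the conjugate of the operator–composed term, the primal step $u^{(l+1)} = \prox_{\lambda \Phi}(u^{(l)} - \lambda \nabla E(u^{(l)}) - \lambda \tilde{\Amat}^t \phi^{(l+1)})$ combines the forward gradient step on the smooth term with the backward step on $\Phi$, and the extrapolation $\bar u^{(l+1)} = 2u^{(l+1)} - u^{(l)} + \lambda \nabla E(u^{(l)}) - \lambda \nabla E(u^{(l+1)})$ is precisely the gradient–correction term that distinguishes Yan's method from the two–operator Chambolle–Pock iteration used in Algorithm \ref{alg:dis}. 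Under this correspondence, the hypotheses $\sigma\lambda < 1/\lambda_{\max}(\tilde{\Amat}\tilde{\Amat}^t)$ and $\lambda < 2\beta$ are exactly Yan's step–size restrictions, so \cite{Yan17} guarantees that $(u^{(l)}, \phi^{(l)})$ converges to a primal–dual optimal pair. Since a minimizer of Problem \ref{3b} is assumed to exist and the relaxed constraints \eqref{BB81}--\eqref{BB82} satisfy a trivial constraint qualification (they have nonempty interior in the relevant variables), such an optimal pair indeed exists, and hence the output $u^*$ converges to a minimizer of Problem \ref{3b} as $\mathrm{Iter}_{\max} \to \infty$ and $\epsilon_1, \epsilon_2 \to 0$ in the stopping criteria \eqref{cri02}--\eqref{cri03}.

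I expect the only delicate points — and thus the main obstacle — to be the bookkeeping needed to align Algorithm \ref{alg:nonlinear} with the iteration exactly as stated in \cite{Yan17}: tracking where the factor $2\tau$ enters the gradient and extrapolation steps, confirming that the relevant cocoercivity constant (and hence the admissible range of $\lambda$) is reported consistently with the hypothesis on $E$, and verifying that existence of a minimizer of Problem \ref{3b} genuinely upgrades to existence of the saddle point that Yan's convergence statement requires. None of these is conceptually hard, but each must be carried out to make the appeal to \cite{Yan17} rigorous; the remaining ingredients are exactly the properties of $\Phi$, $\indi_{\tilde{\bf \delta}}$, and $E$ recorded above.
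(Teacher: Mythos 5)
Your proposal is correct and takes essentially the same route as the paper: Proposition \ref{propc2} is stated there as a direct consequence of Yan's three-operator splitting theorem \cite{Yan17}, with no separate proof beyond the identification of $\Phi$, $\indi_{\tilde{\bf\delta}}\circ\tilde{\Amat}$, and the smooth energy term as the three summands and the step-size conditions $\sigma\lambda<1/\lambda_{\max}(\tilde{\Amat}\tilde{\Amat}^t)$, $\lambda<2\beta$ as Yan's hypotheses. Your verification of the convexity/lower semicontinuity of each summand, the cocoercivity reading of the inequality on $\nabla E$, and the line-by-line match of Algorithm \ref{alg:nonlinear} with Yan's iteration is exactly that appeal made explicit, so nothing further is needed.
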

Note here that the co-coercivity requirement on $\nabla E$ in the above proposition is equivalent to require the Lipschitz continuity of $\nabla E$, i.e., $\|\nabla_u E(u_1) - \nabla_u E(u_2)\| \leq \frac{1}{\beta} \|u_1 - u_2\|$. For the energy of the form \eqref{eqn:energy}, this requirement reduces to the boundedness of $U''(\rho)$ and $W$, which can be satisfied independent of the numerical resolution if we consider bounded solution (no finite time blow up in $\rho$) and non-singular interaction kernel. In the  case when $W$ is singular, for example when $W$ is a  Newtonian interaction potential, we approximate $W$ by a continuous function via convolution with a mollifier; see Remark \ref{energyhypremark}.

%%%%%%%%%%%%%%%%%%%%%%%%%%%%%%%%%%%%%%%%%
\section{Convergence} \label{sec:convergence}

We now prove the convergence of solutions of the fully discrete JKO scheme, Problem \ref{ddynamicJKO}, to a solution of the continuum JKO scheme, Problem \ref{dynamicJKO}. We begin, in section \ref{hypsec}, by describing the hypotheses we place on the underlying domain $\Omega$, the energy $\energy$, the initial data $\rho_0$, and the discretization operators.  Then, in section \ref{existsec}, we show that minimizers of Problem \ref{ddynamicJKO} exist, provided the discretization is sufficiently refined. Finally, in section \ref{convsec}, we prove that any sequence of minimizers of Problem \ref{ddynamicJKO} has a subsequence that converges to a minimizer of Problem \ref{dynamicJKO}. In order for our finite difference approximation to converge, we assume throughout that a smooth, positive minimizer of the continuum JKO scheme Problem \ref{dynamicJKO} exists. See hypothesis (H\ref{smoothhyp}) and Remark \ref{smoothpositive} for further discussion of this assumption.

\subsection{Hypotheses} \label{hypsec}
 We impose the following hypotheses on the underlying domain, energy, and discretization operators.  

\begin{enumerate}[(H1)]
\item $\Omega = \Pi_{i=1}^d (a_i,b_i) \subseteq \Rd$, for $a_i< b_i \in \R$.  We assume that the spacing of the spatial discretization $(\Delta x) >0 $ and the temporal discretization $(\Delta t)>0$ are both functions of $h$ satisfying $\lim_{h \to 0} (\Delta x) = \lim_{h \to 0} (\Delta t) = 0$.
\label{domainhyp}  

\item For any piecewise constant function $\rho^h$ on $\Omega$, the discrete energy functional $\mathcal{E}^h$ has one of the following forms,  as described in section \ref{energydiscretizationsection}: \label{energyhyp}
\begin{enumerate}[(a)]
\item $ \F^h(\rho^h) = \sum_{j} \left( U( \rho_j) + V_j \rho_j \right) \dx+ \sum_{j,l}   W_{j, l} \rho_j \rho_{l}   (\Delta x)^{2d} $ 
\item $ \mathcal{H}^h_{\rho_0}(\rho^h) = \frac12 \F^h(\rho^h) + \frac12 \sum_{j} \left( U'( (\rho_0)_j) + V_j  +\sum_l W_{j, l} (\rho_0)_l   (\Delta x)^{d}  \right) \rho_j (\Delta x)^{d} $  
\item ${\mathcal{G}}_{\rho_1}^h(\rho_j) := \begin{cases} 0 &\text{ if } \sum_j |\rho_j - ( {\rho}^h_1)_j |^2 (\Delta x)^d \leq \delta_5^2  \\ +\infty &\text{ otherwise.} \end{cases} $  
\end{enumerate}

We place the following assumptions on  $U,V,$ and $W$ and the target measure $\rho_1$:
 \begin{enumerate}[(i)] 
 \item Either $U \equiv 0$ or $U \in C([0, +\infty))$ is  convex,   $U \in C^1((0,+\infty))$, $\lim_{r \to +\infty} \frac{U(r)}{r} = +\infty$,  and $U(0) = 0$; 
 \item $ V^h(x) := \sum_{j \in \mathbb{Z}^d} V_{j} 1_{Q_{j}}(x) $ and $W^h(x,y) := \sum_{(j,l) \in \mathbb{Z}^d \times \mathbb{Z}^d} W_{j,l} 1_{Q_{j}}(x) 1_{Q_{l}}(y)$ are piecewise constant approximations of  $V,W \in C( \overline{\Omega})$ converging uniformly on $\overline{\Omega}$.
 \item $\rho_1 \in C^1(\overline{\Omega})$ and $\rho_1^h$ is a pointwise piecewise constant approximation of $\rho_1$. \label{rho1ass}\end{enumerate}

\item $D^{h}_t$ and $D^{h}_x$ are finite difference approximations of the time derivative and spatial divergence. We assume that $D^h_t$ is a forward Euler method in time, whereas $D^h_x$ can be given by an explicit or implicit scheme of first or higher order. We denote by
 $D^{-h}_t$ and $D^{-h}_x$ the dual operators with respect to the $\ell^2$ inner product, and we assume  the following integration by parts formulas hold for all piecewise constant functions $\rho^h,f^h:  [0,1] \to \R$,
\[ \int_0^1 D^{h}_t \rho^h f^h   \rd t = \left(  \left. \rho^h f^h \right|_{0}^1  \right)   -\int_0^1   \rho^h D^{-h}_t f^h  \rd t \]
and if  $m^h:\Omega  \to \R^d$, $f^h : \Omega  \to \R$,
\[   \int_{\Omega} D^{h}_x m^h f^h  \rd x   =   \int_{\partial \Omega } f^h m^h \cdot \nu^h \rd x -  \int_{\Omega } m^h D^{-h}_x f^h \rd x   , \]
where $\nu^h: \overline{\Omega} \to \Rd$ is the discrete outer unit normal of $\Omega$.
Finally, we assume there exists $C>0$ depending on the domain $\overline{\Omega} \times [0,1]$, so that, for any $f \in C^1(\overline{\Omega} \times [0,1]; \R)$ and $v \in C^1(\overline{\Omega} \times[0,1]; \R^d)$, if $({f}^h,{v}^h)$ are  pointwise piecewise constant approximations,
\begin{align*}
  \|D^{h}_t {f}^h - \partial_t f\|_\infty &\leq C\|\partial_t^2 f \|_\infty (\Delta t) , &  \|D^{-h}_t {f}^h - \partial_t f \|_\infty  &\leq C\|\partial_t^2 f \|_\infty (\Delta t) \\
  \|D^{h}_x {v}^h - \grad \cdot v\|_\infty  &\leq C \| D^2 v\|_\infty (\Delta x), &  \|D^{-h}_x {f}^h - \grad f \|_\infty  &\leq C \|D^2 f \|_\infty (\Delta x) \\
  \| {v}^h \cdot \nu^h - v \cdot \nu \|_\infty &  \leq C \|  v\|_\infty (\Delta x) .
\end{align*}
 (See section \ref{discretederivativesection} for  finite difference approximations satisfying these hypotheses.)
\label{discreteophyp}

\item The constraint relaxation parameters $\delta_1, \delta_2, \delta_3, \delta_4 \geq 0$ are functions of $h$ with \label{deltahyp}$ \lim_{h \to 0} \delta_i = 0$, for all $i$. If the energy is of the form (H\ref{energyhyp}c), we require that $\delta_5$ is a function of $h$ satisfying $ \lim_{h \to 0} \delta_5 = 0$  and $ \lim_{h \to 0} \left( \Delta x + \Delta t\right)/\delta_5 = 0$. 

\item The initial data of the continuum problem satisfies $\rho_0 \in C^1(\overline{\Omega})$ and $ {\rho}_0^h$  is a pointwise piecewise constant approximation of $\rho_0$. 

\item Given the domain, energy, and initial data described in the previous hypotheses, there exists a minimizer $( {\rho}, {m})$ of the continuum Problem \ref{dynamicJKO} satisfying $ {\rho} \in C^2([0,1]; C^1(\overline{\Omega}))$, $ {\rho} >0$, and $ {m} \in C^1([0,1]; C^2(\overline{\Omega}))$. \label{smoothhyp}
\label{initialhyp}

\end{enumerate}

To ease notation in the following convergence proof, we observe that Problem \ref{ddynamicJKO} may be rewritten as follows in terms of $(\rho^h, m^h)$,  the piecewise constant functions on $\Omega \times [0,1]$ corresponding to the coefficients $(\rho_{j,k}, m_{j,k})$.
\begin{customproblem}{${1^h}$}[Discrete Dynamic JKO]  \label{ddynamicJKO2}
Fix $\tau, \delta_1, \delta_2, \delta_3, \delta_4 >0$,   $\energy^h$, and    $\rho_0^h$. Solve the constrained optimization problem,
\begin{align*}
\inf_{(\rho^h,m^h) \in \C^h}  \int_0^1 \int_\Omega \Phi( \rho^h, m^h) \rd x \rd t  +2 \tau \energy^h(\rho^h(\cdot, 1)) ,
\end{align*}
where $(\rho^h,m^h)$ belong to the constraint set $\C^h$ provided that they are piecewise constant functions  on $\Omega \times [0,1]$   and the following inequalities hold
\begin{align}
\|D^h_t \rho^h + D^h_x  m^h\|_{L^2(\Omega \times [0,1])}  & \leq \delta_1\,, & \|m^h \cdot \nu^h \|_{L^2(\partial \Omega \times [0,1])}  &\leq \delta_2\,, \label{BB82i} \\
\left\| \int_{\Omega} \rho^h(x, \cdot) \rd x - \int_{\Omega}  {\rho}^h_0(x) \rd x \right\|_{L^2([0,1])} &\leq \delta_3\,, & \|\rho^h(\cdot,0) -  {\rho}_0^h \|_{L^2(\Omega)}  &\leq \delta_4 . \label{BB82ii}
\end{align}
\end{customproblem}

Similarly, we may rewrite the definition of the discrete energies in hypothesis (H\ref{energyhyp}) in terms of a piecewise constant functions $\rho^h$  on $\Omega$ corresponding to $\rho_j$,
\begin{align*} %\label{Ehdefhyp}
 \F^h(\rho^h) &= \int_\Omega U( \rho^h(x)) +  {V}^h(x) \rho^h(x) \rd x + \frac12 \iint_{\Omega\times \Omega}  {W}^h(x,y) \rho^h(x) \rho^h(y) \rd x \rd y , \\ %\label{Hhdefhyp}
 \mathcal{H}^h_{\rho_0}(\rho^h) &= \frac12 \F^h(\rho^h) + \frac12 \int_\Omega \left( U'( \rho^h_0(x)) + V^h(x)  +\int_\Omega W^h(x,y) \rho^h_0(y) \rd y  \right) \rho(x)\rd x , \\
\mathcal{G}_{\rho_1}^h(\rho^h) &= \begin{cases} 0 &\text{ if } \|\rho^h - \rho^h_1 \|_{L^2(\Omega)} \leq \delta_5  \\ +\infty &\text{ otherwise.} \end{cases}
\end{align*}
\noindent Recall that, by definition of $\Phi$ in equation (\ref{Phidef}), $\Phi(\rho^h, m^h)< +\infty$ only if $\rho^h$ is nonnegative. Consequently, if a minimizer $\rho$ exists, it must be nonnegative.

We conclude this section with several remarks on the sharpness of the preceding hypotheses.

\begin{remark}[assumption on domain $\Omega$]
In hypothesis (H\ref{domainhyp}), we assume that $\Omega$ is an n-dimensional hyperrectangle. We impose this assumption for simplicity, as it provides an natural interpretation of the discretized outer unit normal $\nu^h$, which is essential in imposing the boundary conditions for our PDE constraint at the discrete level. More generally, our convergence result can be  extended to any Lipschitz domain, as long as sufficient care is taken to define the discrete outer unit normal and the corresponding no flux boundary conditions.
\end{remark}

\begin{remark}[assumption on energy] \label{energyhypremark}
As described in hypothesis (H\ref{energyhyp}),  our convergence result applies to internal  $U$, drift $V$, and interaction $W$ potentials that are sufficiently regular  on $\overline{\Omega}$. Our assumptions on $U$ are classical and ensure that   the internal energy  is lower semicontinuous with respect to weak-* convergence  \cite[Remark 9.3.8]{AGS}. 
Our assumptions on $V$ and $W$, on the other hand, are somewhat stronger, and in practice, one often encounters partial differential equations for which the corresponding choices of $V$ and $W$ are not continuous. However, there are  robust methods for approximating these potentials by continuous functions   that ensure  convergence of the gradient flows. For example, the second author and Topaloglu provide sufficient conditions on discontinuous interaction potentials $W$ for which gradient flows of the regularized interaction potential, $W_\eps := W*\varphi_\eps$ for a smooth mollifier $\varphi_\eps$, converge to gradient flows of the original interaction potential $W$, as well as conditions that ensure minimizers of $W_\eps$ converge to minimizers of $W$ \cite{craig2016convergence}. (The convergence of general stationary points of $W_\epsilon$ that are not global minimizers to stationary point of $W$ remains open.)
\end{remark}

\begin{remark}[assumption on  $\delta_5$] \label{deltaremark}
In hypothesis (H\ref{deltahyp}), it is essential that $\delta_5$ not vanish too quickly with respect to other parameters in the discretization. A simple illustration of this fact arises in the case that $\delta_1 \equiv \delta_2 \equiv \delta_3 \equiv \delta _4 \equiv 0$. In this case, we cannot choose $\delta_5 \equiv 0$, since our pointwise piecewise approximation of the initial data $\rho_0^h$ will not generally have the same mass as our pointwise piecewise approximation of the target measure $\rho_1^h$, and if they do not have the same mass,  \emph{minimizers of the discrete problem do not exist}. Consequently, it would be impossible to prove that minimizers of the fully discrete problem converge to minimizers of the continuum problem.
 On one hand, this does not greatly impact the performance of our numerical method, as can be seen by considering previous work by Papadakis, P\'eyre, and Oudet, which numerically implements this approach \cite{PPO14}. On the other hand, our numerical simulation in Figure \ref{fig:optimaldelta} indicates that poor choice of the relaxation parameters can cause the method to iterate longer than necessary, without any improvement in accuracy. 
 
Our requirement that $\lim_{h \to 0} (\Delta x + \Delta t)/\delta_5 =0$ is sufficient to fix this problem and ensure convergence of the method, and   this requirement is nearly sharp. To see this, note that, for an arbitrary pointwise piecewise approximation $\rho_0^h$ of a continuous function $\rho_0$, we cannot in general achieve accuracy of $| \int_\Omega \rho_0^h - \int_\Omega \rho_0|$ better than $O (\Delta x)$. If either $\delta_1$ and $\delta_3$, the parameters for the PDE constraint and the mass constraint, are chosen arbitrarily small, then $|\int_\Omega \rho^h(\cdot,1) - \int_\Omega \rho_0^h|$ can likewise be made arbitrarily small. Thus, since $\rho_0, \rho_1 \in \P_{ac}(\Omega)$,
\begin{align*} O(\Delta x) \approx \left| \int_\Omega \rho_0^h - \int_\Omega \rho_0 \right| &\approx \left| \int_\Omega \rho_0^h - \int_\Omega \rho_0 \right| - \left|\int_\Omega \rho_0 - \int_\Omega \rho_1 \right| - \left| \int_\Omega \rho^h(\cdot, 1) - \int_\Omega \rho_0^h \right|   \\
&\leq \left| \int_\Omega \rho^h(\cdot, 1) - \int_\Omega \rho_1 \right| \leq |\Omega|^{1/2} \| \rho^h(\cdot, 1) - \rho_1\|_{L^2(\Omega)} \leq |\Omega|^{1/2} \delta_5 ,
\end{align*}
so we much have $\delta_5 \geq O(\Delta x)$.
While a CFL-type condition is not necessary for the stability of our discretization of the PDE constraint, since $\rho$ and $m$ indeed become coupled in the continuum limit (see equations (\ref{BB1}) and (\ref{BB4})), one should expect $(\Delta t) \leq O(\Delta x)$ to give the best balance between computational accuracy and cost, and we indeed observe this numerically. Combining these facts shows that enforcing that $\delta_5$ cannot decay faster than $O(\Delta x + \Delta t)$ by assuming $\lim_{h \to 0} (\Delta x + \Delta t)/\delta_5 =0$ is nearly optimal.
%Katy note to self: more specifically, $\delta_5$ depends on the rate of convergence of $m^h$ to $m$ and ${\rho}_0^h$ to $\rho$
\end{remark}

\begin{remark}[assumption on existence of smooth, positive minimizer] \label{smoothpositive}
In hypothesis (H\ref{smoothhyp}), we suppose that there exists a sufficiently regular minimizer $(\overline{\rho},\overline{m})$, $\bar{\rho}>0$, of the continuum problem. Our proof of the existence of minimizers of the fully discrete problem and our proof that minimizers of the discrete problems converge to a minimizer of the continuum problem as $h \to 0$ strongly rely on this assumption. In particular, the smoothness assumption allows us to use convergence of the finite difference operators, described in hypothesis (H\ref{discreteophyp}),   to construct an element of $\mathcal{C}^h$ in Proposition \ref{specialelement}. The positivity assumption allows us to conclude that $\grad_{\rho, m} \Phi$ is uniformly bounded on the range of $\bar{\rho}$, which we use to prove the $\limsup$ inequality for the recovery sequence  in Theorem \ref{gammaconvergence}(\ref{limsupineq}). 

From the perspective of approximating  gradient flows, which are solutions of diffusive partial differential equations (\ref{eqn:000}), such regularity and positivity can be guaranteed as long as the initial data is smooth and positive and either the diffusion is sufficiently strong or  the drift and interaction terms do not cause loss of regularity. On the other hand, developing conditions on the energy and initial data that ensure such regularity and positivity holds at the level of the JKO scheme, for minimizers of Problem \ref{dynamicJKO}, remains largely open: results  on the propagation of  $L^p(\Rd)$ or BV bounds along the scheme have only recently emerged \cite{blanchet2012functional, carrillo2018L, de2016bv}.
 
From the perspective of approximating Wasserstein geodesics, the now classical regularity theory developed by Caffarelli and Urbas ensures that if the  source and target measures $\rho_0$ and $\rho_1$ are smooth and strictly positive, then the minimizer of Problem \ref{dynamicJKO} $(\bar{\rho},\bar{m})$ is also smooth and strictly positive. (See, for example, \cite[Section 4.3]{Villani03a} and \cite[Section 8.3]{AGS}.)

Along with this analytical justification for our smoothness and positivity assumptions, our numerical results also indicate that such assumptions are in general necessary. For example in Figure \ref{fig:palace}, we observe that if the source and target measure of a Wasserstein geodesic are not sufficiently smooth, the numerical solution introduces artificial regularity. Likewise, even in Figure \ref{fig:PM1D}, we observe that the numerical simulation is strictly positive (though very close to zero in places), while the exact solution is identically zero outside of its support. Still, in spite of the fact that our theoretical convergence result requires smoothness and positivity assumptions, in practice our numerical method  still performs well on nonsmooth or nonpositive problems, provided that the spatial and temporal discretization are taken to be sufficiently small; see Figures \ref{fig:pacman}-\ref{fig:KS3}. 

Finally, these types of smoothness and positivity assumptions are typically needed in convergence proofs for numerical methods based on the JKO scheme. For example, in a method based on the Monge Amp\'ere approximation of the Wasserstein distance, the exact solution is required to be uniformly bounded above and below  \cite{BCMO16}. Likewise, while rigorous convergence results for fully discrete numerical methods based on entropic or Fisher information regularization remain open, since these methods correspond to introducing numerical diffusion at the level of the PDE, they automatically enforce smoothness and positivity \cite{CPSV16, CDPS17,LYO17}.
\end{remark}

\subsection{Existence of minimizers} \label{existsec}
We now show that, under the hypotheses described in the previous section,  minimizers of the fully discrete JKO scheme, Problem \ref{ddynamicJKO2}, exist for all $h>0$ sufficiently small. We begin with the following proposition, which constructs a specific element in the constraint set $\mathcal{C}^h$, which we will use both in our proof of existence of minimizers and in our $\Gamma$-convergence results in the next section.

\begin{proposition}[construction of element in $\mathcal{C}^h$] \label{specialelement}
Suppose that hypotheses (H\ref{domainhyp})-(H\ref{smoothhyp})   hold, and choose $(\rho, m) \in \mathcal{C}$  satisfying $ {\rho} \in C^2([0,1]; C^1(\overline{\Omega}))$, $ {\rho} >0$, and $ {m} \in C^1([0,1]; C^2(\overline{\Omega}))$. Then for $h>0$ sufficiently small, there exists $(\tilde{\rho}^h, \tilde{m}^h) \in \mathcal{C}^h$ satisfying  $(\tilde{\rho}^h, \tilde{m}^h) \xrightarrow{h \to 0} (\rho,m)$ uniformly on $\Omega \times [0,1]$ and
\begin{align} \label{rhohunifpos}
 \inf_{h >0, (x,t) \in \Omega \times [0,1]} \tilde{\rho}^h(x,t) >0 .
 \end{align}

If, in addition, the energy satisfies hypothesis (H\ref{energyhyp}c) and $\energy(\rho(\cdot, 1))<+\infty$, then we have
\begin{align} \label{specialelementc}
  \| \tilde{\rho}^h(\cdot, 1) - \rho^h_1\|_{L^2(\Omega)} \leq \delta_5 ,
  \end{align}
for all $h>0$ sufficiently small.
\end{proposition}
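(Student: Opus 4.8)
The plan is to construct the approximating pair $(\tilde\rho^h, \tilde m^h)$ essentially from the pointwise piecewise constant approximations of the given smooth curve $(\rho,m)$, then perturb it slightly so that it lands \emph{exactly} in the relaxed constraint set $\mathcal{C}^h$, using the fact that the relaxation parameters $\delta_i$ are strictly positive while the finite-difference errors decay with $h$. First I would set $\tilde m^h = \hat m^h$, the pointwise piecewise constant approximation of $m$, and take as a preliminary candidate $\hat\rho^h$, the pointwise piecewise approximation of $\rho$. By hypothesis (H\ref{discreteophyp}), since $\rho \in C^2([0,1];C^1(\overline\Omega))$ and $m \in C^1([0,1];C^2(\overline\Omega))$, we have $\|D^h_t \hat\rho^h + D^h_x \hat m^h - (\partial_t\rho + \grad\cdot m)\|_\infty \le C(\Delta x + \Delta t)$; and since $(\rho,m) \in \mathcal{C}$, the exact quantity $\partial_t\rho + \grad\cdot m$ vanishes, so the $L^2(\Omega\times[0,1])$ norm of the discrete residual is $O(\Delta x + \Delta t)$. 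Similarly $\|\hat m^h \cdot \nu^h\|_{L^2(\partial\Omega\times[0,1])} = O(\Delta x)$ using the no-flux condition $m\cdot\nu = 0$ and the estimate on $\|v^h\cdot\nu^h - v\cdot\nu\|_\infty$, and $\|\hat\rho^h(\cdot,0) - \rho_0^h\|_{L^2(\Omega)} = O(\Delta x)$ because both are pointwise approximations of $\rho(\cdot,0) = \rho_0$. The mass defect $\|\int_\Omega \hat\rho^h(x,\cdot)\,dx - \int_\Omega \rho_0^h(x)\,dx\|_{L^2([0,1])}$ is likewise $O(\Delta x)$, since $\int_\Omega \rho(x,t)\,dx$ is constant in $t$ (mass is preserved along $\mathcal{C}$) and both pieces are $O(\Delta x)$ close to $\int_\Omega \rho_0$.

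The mild subtlety is that these are all $O(\Delta x + \Delta t)$ but need not be genuinely $\le \delta_i$ unless $\delta_i$ dominates $\Delta x + \Delta t$ — and hypothesis (H\ref{deltahyp}) only asserts $\delta_i \to 0$, not that $\delta_i$ decays slower than $\Delta x + \Delta t$. To handle this I would \emph{not} rely on $\hat\rho^h$ alone but instead reduce all four error quantities by a small correction. The cleanest route: because $\Phi$ and the constraints scale nicely, one can simply use the convexity of $\mathcal{C}$ together with the trivial element $((\rho_0^h)_j, 0)$ — wait, that does not satisfy the PDE constraint either. The honest fix is to observe that the constraints (\ref{BB82i})--(\ref{BB82ii}) are \emph{relaxed}: we are free to choose $h$ small enough. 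Rather than forcing $\delta_i$ to dominate, I would define $\tilde\rho^h$, $\tilde m^h$ by scaling the candidate toward a genuine discrete solution: precisely, since $D^h_t$ and $D^h_x$ are linear, the affine subspace $\{(\rho^h,m^h): D^h_t\rho^h + D^h_x m^h = 0, \ m^h\cdot\nu^h = 0, \ \rho^h(\cdot,0) = \rho_0^h, \ \int_\Omega\rho^h = \int_\Omega\rho_0^h\}$ is nonempty (it contains, e.g., the stationary solution $\rho^h \equiv \rho_0^h$, $m^h \equiv 0$), and its distance to $(\hat\rho^h, \hat m^h)$ is $O(\Delta x + \Delta t)$ by the estimates above. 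Projecting $(\hat\rho^h,\hat m^h)$ onto a nearby point of this affine set — or, more simply, taking a convex combination $(1-\theta_h)(\hat\rho^h,\hat m^h) + \theta_h(\rho_0^h, 0)$ with $\theta_h \to 0$ chosen so that all four residuals drop below the respective $\delta_i$ — produces $(\tilde\rho^h,\tilde m^h) \in \mathcal{C}^h$ with $\|(\tilde\rho^h,\tilde m^h) - (\hat\rho^h,\hat m^h)\|_\infty \to 0$, hence $(\tilde\rho^h,\tilde m^h) \to (\rho,m)$ uniformly. The uniform positivity (\ref{rhohunifpos}) then follows: $\rho$ is continuous and strictly positive on the compact set $\overline\Omega\times[0,1]$, so $\rho \ge c > 0$, and uniform convergence gives $\tilde\rho^h \ge c/2$ for $h$ small.

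For the final claim (\ref{specialelementc}), under (H\ref{energyhyp}c) with $\energy(\rho(\cdot,1)) < +\infty$ — meaning the continuum curve already satisfies $\|\rho(\cdot,1) - \rho_1\|_{L^2(\Omega)} = 0$, i.e., $\rho(\cdot,1) = \rho_1$ — I would estimate $\|\tilde\rho^h(\cdot,1) - \rho_1^h\|_{L^2(\Omega)} \le \|\tilde\rho^h(\cdot,1) - \hat\rho^h(\cdot,1)\|_{L^2} + \|\hat\rho^h(\cdot,1) - \rho_1^h\|_{L^2}$. The first term is $o(1)$ by construction (in fact can be made $O(\theta_h)$), and the second is $O(\Delta x)$ since both $\hat\rho^h(\cdot,1)$ and $\rho_1^h$ are pointwise piecewise constant approximations of the same $C^1$ function $\rho(\cdot,1) = \rho_1$ (using (H\ref{energyhyp})(\ref{rho1ass})). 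Thus $\|\tilde\rho^h(\cdot,1) - \rho_1^h\|_{L^2(\Omega)} = O(\Delta x + \Delta t)$, which is $\le \delta_5$ for $h$ small precisely by the hypothesis $\lim_{h\to 0}(\Delta x + \Delta t)/\delta_5 = 0$ in (H\ref{deltahyp}).

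I expect the main obstacle to be the bookkeeping around the convex-combination (or projection) correction: one must verify that pushing toward $(\rho_0^h, 0)$ simultaneously decreases \emph{all four} residuals at a controlled rate and does not destroy positivity, and must choose $\theta_h$ (as a function of $h$) to beat the slowest-decaying $\delta_i$ while still $\theta_h \to 0$ — which is possible precisely because every $\delta_i > 0$ for each fixed $h$ and every residual is $O(\Delta x + \Delta t) \to 0$. The rest is routine application of the finite-difference consistency estimates in (H\ref{discreteophyp}) and elementary $L^2$ versus $L^\infty$ comparisons on the bounded domain.
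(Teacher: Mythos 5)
Your diagnosis of the crux is right — hypothesis (H\ref{deltahyp}) only demands $\delta_1,\dots,\delta_4\to 0$, with no rate relative to $\Delta x+\Delta t$, so the raw pointwise piecewise constant pair $(\hat\rho^h,\hat m^h)$, whose residuals are $O(\Delta x+\Delta t)$, need not lie in $\mathcal{C}^h$. But your proposed repair does not work. The constraints (\ref{BB82i})--(\ref{BB82ii}) are affine and the stationary pair $(\rho^h\equiv\rho_0^h,\,m^h\equiv 0)$ has \emph{zero} residuals (contrary to your momentary doubt, it does satisfy the discrete PDE constraint exactly, since $D^h_t$ of a time-constant function vanishes), so the convex combination $(1-\theta_h)(\hat\rho^h,\hat m^h)+\theta_h(\rho_0^h,0)$ merely multiplies each residual by $(1-\theta_h)$. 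If some $\delta_i$ decays faster than $\Delta x+\Delta t$, forcing the $i$th residual below $\delta_i$ requires $\theta_h\geq 1-\delta_i/r_h\to 1$, which is incompatible with $\theta_h\to 0$; and with $\theta_h\to 1$ the pair converges to $(\rho_0,0)$, not to $(\rho,m)$, so the uniform convergence and hence (\ref{rhohunifpos}) are lost. Your fallback, ``project onto the affine set of exact discrete solutions,'' is also unjustified as stated: the distance from $(\hat\rho^h,\hat m^h)$ to $\{u: A^h u=b^h\}$ is not controlled by the residual $\|A^h\hat u-b^h\|$ alone (it involves the pseudoinverse of the $h$-dependent constraint operator), and even granting an $L^2$ bound you would still need closeness in $L^\infty$, uniformly in $h$, to get positivity and the later use of $\nabla_{\rho,m}\Phi$ bounded on $\{\rho\geq c\}$.

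The paper avoids any rate condition on $\delta_1,\dots,\delta_4$ by constructing an element of $\mathcal{C}^h$ with identically zero residuals: take $\tilde m^h$ to be the pointwise piecewise approximation of $m$ with the normal components set exactly to zero in boundary cells (so $\tilde m^h\cdot\nu^h\equiv 0$), and then \emph{define} $\tilde\rho^h$ by $\tilde\rho^h(\cdot,0)=\rho_0^h$ and $D^h_t\tilde\rho^h+D^h_x\tilde m^h\equiv 0$, which is possible because (H\ref{discreteophyp}) takes $D^h_t$ to be forward Euler, so $\tilde\rho^h$ is obtained by explicit time-marching; the discrete integration-by-parts formula with $f^h\equiv 1$ then yields exact mass conservation, so all four constraints hold with zero error for every $h$. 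Uniform convergence $\tilde m^h\to m$ uses $m\cdot\nu=0$ together with $\|Dm\|_\infty$, and uniform convergence $\tilde\rho^h\to\rho$ follows by summing the discrete equation in time and comparing with $\rho(x,t)=\rho(x,0)+\int_0^t\partial_s\rho(x,s)\,\rd s$, giving an $O(\Delta x+\Delta t)$ bound in $L^\infty$; positivity and (\ref{specialelementc}) (via $(\Delta x+\Delta t)/\delta_5\to 0$) then follow exactly as in your last two paragraphs. So your consistency estimates, boundary-flux bound, and the $\delta_5$ argument are fine, but the central step — producing an element of $\mathcal{C}^h$ that is provably $O(\Delta x+\Delta t)$-close to $(\rho,m)$ in $L^\infty$ — is missing from your argument, and the mechanism you propose for it fails precisely in the regime you yourself flagged.
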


\begin{proof}
We construct $(\rho^h,m^h) \in \mathcal{C}^h$ as follows. Let  
 $\hat{m}^h$  be a pointwise piecewise constant approximation of $m$; see equation (\ref{pointwisepiecewise}). Recall that  $\nu^h$ is the discrete outer unit normal vector. We define $\tilde{m}^h: \Omega \times[0,1] \to \Rd $ component-wise to respect the no flux boundary conditions, letting $(\tilde{m}^h)_l$ denote the $l$th component of the vector for $l = 1, \dots, d$. If $x \in \partial \Omega$, then we define
 \[ (\tilde{m}^h(x,t))_l = \begin{cases}  (\hat{m}^h(x,t))_l &\text{ for } e_l \cdot \nu^h(x) = 0 , \\ 0 &\text{ for } e_l \cdot \nu^h(x) \neq 0 . \end{cases}  \]
Otherwise, we take $\tilde{m}^h(x,t) = \hat{m}^h(x,t)$.
Define $\tilde{\rho}^h: \Omega \times [0,1] \to \R$ so that $\tilde{\rho}^h(x,0) =  {\rho}_0^h$    and $D^{h}_t \tilde{\rho}^h(x,t) + D^h_x \tilde{m}^h(x,t) \equiv 0$.

We begin by showing that  $(\tilde{\rho}^h, \tilde{m}^h) \in \mathcal{C}^h$. By construction, for all $h >0$,
\begin{align*}
\|D^h_t \tilde{\rho}^h + D^h_x \tilde{m}^h\|_{L^2(\Omega \times [0,1])}   &= 0 \\
 \|\tilde{m}^h \cdot \nu^h \|_{L^2(\partial \Omega \times [0,1])} &=0 \\
  \|\tilde{\rho}^h(\cdot,0) -  {\rho}_0^h \|_{L^2(\Omega)} &=0  .
\end{align*}
Taking $f^h \equiv 1$ in Hypothesis (H\ref{discreteophyp}) and applying the PDE constraint ensures that, for all $s \in [0,1]$ and $k \in \mathbb{N}$ so that $k (\Delta t) \leq s < (k+1) \Delta t$,
\begin{align*}
 \int_{\Omega} \tilde{\rho}^h(x, s) \rd x - \int_\Omega \tilde{\rho}^h(x,0) \rd x  &=   \int_0^{k (\Delta t)} \int_{\Omega} D^h_t \tilde{\rho}^h(x,t) f^h(x,t) \rd x \rd t   \\
&= - \int_0^{k (\Delta t)} \int_{\Omega} D^h_x \tilde{m}^h(x,t) f^h(x,t) \rd x \rd t  \\
&=  - \int_0^{k (\Delta t)} \int_{\partial \Omega}   \tilde{m}^h(x,t) \cdot \nu^h(x,t) \rd x \rd t    = 0 .
\end{align*}
Thus, we also obtain 
\[ \left\| \int_\Omega \tilde{\rho}^h(x, \cdot) \rd x - \int_\Omega {\rho}_0^h(x) \rd x  \right\|_{L^2([0,1])}  = 0 , \text{ for all } h >0 .\]
This concludes the proof that $(\tilde{\rho}^h, \tilde{m}^h) \in \mathcal{C}^h$. 

We now show that $(\tilde{\rho}^h, \tilde{m}^h) \to (\rho,m)$ uniformly on $\Omega \times [0,1]$ as $h \to 0$.
 We begin by proving convergence of $\tilde{m}^h$ to $m$.  Due to hypothesis (H\ref{domainhyp}) on our domain $\Omega$,  whenever $e_i \cdot \nu^h(x) \neq 0$, there exists $y \in \partial \Omega$ so that $|y-x| \leq 2\sqrt{d} (\Delta x)$ and $  \nu(y) = e_i$.  
Thus, whenever $e_i \cdot \nu^h(x) \neq 0$, the continuum boundary condition $m(y,t) \cdot \nu(y) = 0$ ensures that for all $t \in [0,1]$,
\begin{align*}
|(\tilde{m}^h(x,t) - m(x,t))_i|= |m(x,t)\cdot e_i| &\leq   |(m(x,t) - m(y,t)) \cdot e_i| + |m(y,t)\cdot e_i  | \leq 2\sqrt{d} (\Delta x) \|Dm\|_\infty  .
\end{align*}
We also have that, for all $(x,t) \in \Omega \times [0,1]$,
\begin{align*}
 |\hat{m}^h(x,t) - m (x,t)| \leq(\Delta x) \|D m \|_\infty + (\Delta t) \| \partial_t m \|_\infty     .
\end{align*}
Therefore, for all $(x,t) \in \Omega \times [0,1]$, there exists $C_m = C_m(d, \|Dm\|_\infty, \| \partial_t m \|_\infty)>0$ so that
\begin{align*} %\label{unifmconv}
 |\tilde{m}^h(x,t) - m (x,t)| \leq C_m(\Delta t + \Delta x) \xrightarrow{h \to 0} 0.
\end{align*}

We now prove the convergence of $\tilde{\rho}^h$  to $\rho$.  Since $(\rho,m)$  is a classical solution of the  PDE constraint and $\tilde{\rho}^h: \Omega \times [0,1] \to \R$ is defined by the conditions that $\tilde{\rho}^h(x,0) = \hat{\rho}_0^h$ and $D^{h}_t \tilde{\rho}^h(x,t) + D^h_x \tilde{m}^h(x,t) \equiv 0$,  for  $(x,t) \in \Omega \times[0,1]$ and $k \in \mathbb{N}$ so that $k (\Delta t) \leq t < (k+1) (\Delta t)$, we have
\begin{align} \label{unifrhoconv}
&|\tilde{\rho}^h(x,t) - \rho(x,t)|  \nonumber \\
&\quad = \left| \tilde{\rho}^h(x,0) + \int_0^{k (\Delta t)} D^h_t \tilde{\rho}^h(x,s) \rd s - \rho(x,0)  - \int_0^t \partial_s   \rho(x,s) \rd s \right| \nonumber \\
&\quad = \left|  {\rho}_0^h(x) - \int_0^{k (\Delta t)} D^h_x \tilde{m}^h(x,s) \rd s - \rho(x,0)  + \int_0^t \grad \cdot m(x,s) \rd s \right| \nonumber \\
&\quad = \left|  {\rho}_0^h(x) - \rho(x,0) \right| + \left| \int_0^{k (\Delta t)} D^h_x \tilde{m}^h(x,s) \rd s  - \int_0^{k (\Delta t)} \grad \cdot m(x,s) \rd s \right| +  \left| \int_{k (\Delta t)}^t \grad \cdot m(x,s) \rd s \right| \nonumber \\
&\quad \leq \|\grad \rho\|_\infty (\Delta x) +   C \|D^2 m \|_\infty (\Delta x) + \| \grad \cdot m\|_\infty (\Delta t) \xrightarrow{h \to 0} 0.
\end{align}
Since $\tilde{\rho}^h \to \rho$ uniformly and $\rho >0$, we immediately obtain (\ref{rhohunifpos}).

Finally, suppose  the energy satisfies (H\ref{energyhyp}c). Since $\energy(\rho(\cdot, 1))  = \G_{\rho_1}(\rho(\cdot, 1))< +\infty$, we have $\rho(\cdot, 1) = \rho_1$. By inequality  (\ref{unifrhoconv}) and the fact that $\rho^h_1$ is a pointwise piecewise approximation of $\rho(\cdot, 1)$,
\begin{align*}
\| \tilde{\rho}^h(\cdot, 1) - \rho^h_1\|_{L^2(\Omega)} \leq |\Omega|^{1/2} \left(\|\tilde{\rho}^h(\cdot, 1) - \rho(\cdot, 1)\|_\infty + \|\rho(\cdot, 1) -\rho^h_1 \|_\infty \right) \leq C_{\rho,m} (\Delta x + \Delta t)
\end{align*}
where $C_{\rho,m} = C_{\rho,m}(\Omega,\|\grad \rho\|_\infty,\|\grad \cdot m \|_\infty, \|D^2m\|_\infty)>0$.   By hypothesis (H\ref{deltahyp}), $\lim_{h \to 0} \frac{(\Delta x + \Delta t)}{\delta_5} \to 0$. Thus, for $h$ sufficiently small,
\[ \| \tilde{\rho}^h(\cdot, 1) - \rho^h_1\|_{L^2(\Omega)} \leq \delta_5 , \]
which completes the proof.

\end{proof}

\begin{theorem}[minimizers of discrete dynamic JKO exist]  \label{minimizersexist}
Suppose that hypotheses (H\ref{domainhyp})-(H\ref{smoothhyp})   hold. Then for all $h>0$ sufficiently small, a minimizer of Problem \ref{ddynamicJKO2} exists.
\end{theorem}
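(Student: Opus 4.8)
The plan is to apply the direct method of the calculus of variations, taking advantage of the fact that, for each fixed $h>0$, Problem \ref{ddynamicJKO2} is a constrained minimization over the finitely many coefficients $(\rho_{j,k},m_{j,k})$ of the piecewise constant functions $(\rho^h,m^h)$ on $\Omega\times[0,1]$. First I would record that the constraint set is nonempty and the infimum is finite: applying Proposition \ref{specialelement} to the smooth, positive minimizer $(\rho,m)$ of the continuum Problem \ref{dynamicJKO} guaranteed by hypothesis (H\ref{smoothhyp}) produces, for all $h$ sufficiently small, an element $(\tilde\rho^h,\tilde m^h)\in\mathcal{C}^h$ with $\tilde\rho^h$ bounded below by a positive constant uniformly in $h$; when the energy has the form (H\ref{energyhyp}c), estimate (\ref{specialelementc}) of that proposition---valid because $\energy(\rho(\cdot,1))=\G_{\rho_1}(\rho(\cdot,1))<+\infty$ forces $\rho(\cdot,1)=\rho_1$---guarantees that $(\tilde\rho^h,\tilde m^h)$ also respects the relaxed target constraint $\|\tilde\rho^h(\cdot,1)-\rho_1^h\|_{L^2(\Omega)}\le\delta_5$. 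Since $\tilde\rho^h>0$, the kinetic term $\int_0^1\int_\Omega\Phi(\tilde\rho^h,\tilde m^h)$ is finite, and the energy term is finite in all three cases of (H\ref{energyhyp}), so the infimum in Problem \ref{ddynamicJKO2} is bounded above by some $C_0<+\infty$.

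The heart of the argument is a compactness estimate: the sublevel set $K:=\{(\rho^h,m^h)\in\mathcal{C}^h:\text{objective}\le C_0\}$ is a bounded subset of the coefficient space. On $K$ the integrand $\Phi$ is finite in every cell, which by definition of $\Phi$ forces $\rho_{j,k}\ge0$ for all $j,k$. The relaxed mass constraint---the $\delta_3$ inequality in (\ref{BB82ii})---controls $\big|\sum_j\rho_{j,k}(\Delta x)^d-\int_\Omega\rho_0^h\big|$ uniformly in $k$, so together with $\rho_{j,k}\ge0$ one gets $0\le\rho_{j,k}\le R_1$ for a constant $R_1$ depending only on $h$ and $\rho_0^h$; consequently $|\energy^h(\rho^h(\cdot,1))|$ is bounded on $K$, and hence $\int_0^1\int_\Omega\Phi(\rho^h,m^h)\le C_2$ for some constant $C_2$. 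Examining one cell at a time, if $\rho_{j,k}>0$ then $|m_{j,k}|^2/\rho_{j,k}\le C_2/((\Delta x)^d(\Delta t))$, so $|m_{j,k}|^2\le R_1C_2/((\Delta x)^d(\Delta t))$, while if $\rho_{j,k}=0$ then $m_{j,k}=0$; either way $|m_{j,k}|\le R_2$ on $K$. (Alternatively, the bound on $\sum_j\rho_{j,k}(\Delta x)^d$ can be derived from the $\delta_1,\delta_2,\delta_4$ constraints by testing the discrete continuity equation against $f^h\equiv1$, just as in the proof of Proposition \ref{specialelement}.)

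To conclude, I would invoke the Weierstrass theorem. The set $\mathcal{C}^h$ is closed, since the inequalities (\ref{BB82i})--(\ref{BB82ii}), and in case (c) the additional $\delta_5$ inequality, involve only continuous (indeed convex quadratic) functions of the coefficients; hence $K$ is compact by the previous step. The objective is lower semicontinuous on $K$: the map $(\rho,m)\mapsto\Phi(\rho,m)$ is the perspective function of $|m|^2$, which is convex and lower semicontinuous on $\R\times\R^d$, so its finite sum over cells is as well; the energy term is continuous in the coefficients in cases (a) and (b) (using that $U\in C([0,+\infty))$, that limits of the nonnegative quantities $\rho_{j,N_t}$ remain nonnegative, and that $V^h,W^h$ are bounded) and lower semicontinuous in case (c) (its sublevel sets are $\emptyset$, a closed ball, or everything). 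Thus the objective attains its minimum over the compact set $K$, and since $(\tilde\rho^h,\tilde m^h)\in K$ this minimum coincides with the infimum over all of $\mathcal{C}^h$; the minimizer produced is the asserted minimizer of Problem \ref{ddynamicJKO2}.

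The step I expect to be the main obstacle is the compactness estimate: one must first use the effective domain of $\Phi$---which delivers $\rho\ge0$---in tandem with the relaxed mass constraint to bound $\rho$, and only afterwards use the resulting control of the kinetic energy to bound $m$; nonnegativity is indispensable here, as the mass constraint by itself says nothing without it. A secondary subtlety is the nonemptiness of $\mathcal{C}^h$, which genuinely fails for poorly chosen relaxation parameters (for instance $\delta_5\equiv0$, cf. Remark \ref{deltaremark}); this is exactly why hypotheses (H\ref{deltahyp}) and (H\ref{smoothhyp}) are needed and why the statement requires $h$ sufficiently small.
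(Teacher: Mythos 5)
Your proposal is correct and follows essentially the same route as the paper: Proposition \ref{specialelement} for nonemptiness and finiteness of the infimum, nonnegativity of $\rho$ from the effective domain of $\Phi$ combined with the relaxed mass constraint to bound the $\rho$-coefficients, the resulting bound on the kinetic term (via $\Phi(\rho,m)\geq |m|^2/R$) to bound the $m$-coefficients, and finite-dimensional compactness plus lower semicontinuity of the objective and closedness of $\mathcal{C}^h$ to conclude. Phrasing this as compactness of a sublevel set plus Weierstrass, rather than extracting a uniformly convergent subsequence from a minimizing sequence, is only a cosmetic difference.
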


\begin{proof}
First, we note that Proposition \ref{specialelement} ensures that, for $h>0$ sufficiently small, the constraint set $\mathcal{C}^h$ is nonempty and contains some $(\rho^h,m^h)$ satisfying $\rho^h > 0$. 
If the energy satisfies (H\ref{energyhyp}a) or (H\ref{energyhyp}b), then we immediately obtain $\energy^h(\rho^h( \cdot, 1))<+\infty$. Similarly,   if the energy satisfies (H\ref{energyhyp}c), then inequality (\ref{specialelementc}) in Proposition \ref{specialelement} again ensures that $\energy^h(\rho^h(\cdot,1)) < +\infty$.

Since $\Phi(\rho^h,m^h)<+\infty$ whenever $\rho^h \geq 0$, this ensures that value of the objective function in the discrete minimization problem \ref{ddynamicJKO2} is not identically $+\infty$ on the constraint set.
Therefore,
\begin{align} \label{minimizingsequence1}
\inf_{(\rho^h,m^h) \in \C^h}  \int_0^1 \int_\Omega \Phi( \rho^h (x,t), m^h (x,t)) \rd x \rd t  +2 \tau \energy^h(\rho^h(\cdot, 1))  <+\infty ,
\end{align}
and we may choose a minimizing sequence $(\rho^h_n, m^h_n) \in \mathcal{C}^h$ that converges to the infimum. We may assume, without loss of generality, that
\begin{align} \label{minimizingsequence1p5}
\sup_n \int_0^1 \int_\Omega \Phi( \rho_n^h (x,t), m_n^h (x,t)) \rd x \rd t  +2 \tau \energy^h(\rho_n^h(\cdot, 1))  <+\infty ,
\end{align}

To conclude the proof of the theorem, we will now show that there exists $(\rho^h_*, m^h_*)$ so that a subsequence of $(\rho^h_n, m^h_n)$ converges to $(\rho^h_*, m^h_*)$ uniformly on $\Omega \times [0,1]$. Then, since the objective functional $\energy^h$ is lower semi-continuous along uniformly convergent sequences \cite[Example 2.36]{ambrosio2000functions} and the constraint set $\mathcal{C}^h$ is closed under uniform convergence for fixed $h>0$, $ (\rho^h_*, m^h_*)$ must be a minimizer of the fully discrete problem. 

In order to obtain compactness of $(\rho^h_n, m^h_n)$, first note that (\ref{minimizingsequence1}) ensures $ \Phi( \rho^h, m^h) < +\infty$ on $\overline{\Omega} \times [0,1]$, so $\rho^h \geq 0$ on $\overline{\Omega}$. Furthermore, the mass constraint (\ref{BB82ii}) ensures that there exists $R = R(h) >0$, depending on $\Omega$, $(\Delta x)$, $(\Delta t)$, and $\delta_3$ so that $|\rho^h_n(x,t)| \leq R$ for all $(x,t) \in \Omega \times [0,1]$. Therefore, the vector of coefficients $(\rho^h_n)_{j,k}$ for this piecewise constant function satisfies $(\rho^h_n)_{j,k} \in B_R(0) \subseteq \mathbb{R}^{N_x^d N_t}$. Consequently, by the Heine-Borel theorem, there exists a vector $(\rho^h_*)_{j,k} \in \mathbb{R}^{N_x^d N_t}$ so that, up to a subsequence, $(\rho^h_n)_{j,k} \to (\rho^h_*)_{j,k}$. Therefore, if $\rho^h_*$ denotes the corresponding piecewise constant function, we have that, up to taking a subsequence which we again denote by $\rho^h_n(x,t)$,  $\lim_{n \to +\infty} \rho^h_n(x,t) = \rho^h_*(x,t)$ uniformly on $\Omega \times [0,1]$.

Next, we show that
\begin{align} \label{fixedhenergybd}
 \inf_{n} \energy^h(\rho^h_n(\cdot, 1)) >-\infty . 
 \end{align}
 If the energy satisfies (H\ref{energyhyp}c), then $\energy^h(\rho^h_n(\cdot, 1)) \geq 0$ for all $n$, and the above inequality is immediate. If the energy satisfies (H\ref{energyhyp}a) or (H\ref{energyhyp}b), then this follows from the fact that $U$ is bounded below on $[0, +\infty]$, $V$ and $W$ are bounded below on $\overline{\Omega}$ and $\rho^h_n(x,t) \to \rho^h_*(x,t)$ uniformly.
 
 Combining  (\ref{minimizingsequence1p5}) and (\ref{fixedhenergybd}), we obtain   
 \begin{align} \label{fixedhPhibd1}
 \sup_{n} \int_0^1\int_\Omega \Phi( \rho^h_n(x,t), m^h_n(x,t)) \rd x \rd t < +\infty .
 \end{align}
 Furthermore, since $0 \leq \rho^h_n(x,t) \leq R$ for all $(x,t) \in \Omega \times [0,1]$, $n \in \mathbb{N}$, we have 
 \begin{align} \label{fixedhPhibd}
  \Phi( \rho^h_n(x,t), m^h_n(x,t)) \geq |m^h_n(x,t)|^2/R .
  \end{align}
  Therefore, combining (\ref{fixedhPhibd1}) and (\ref{fixedhPhibd}), we obtain that there exists $R' = R'(h)>0$,   depending on $\Omega$, $(\Delta x)$, $(\Delta t)$, and $\delta_3$, so that $|m^h_n(x,t)| \leq R'$ for all $(x,t) \in \Omega \times [0,1]$. Arguing as before, the Heine-Borel theorem ensures that, up to a subsequence, $\lim_{n \to +\infty} m^h_n(x,t) = m^h_*(x,t)$ uniformly on $\Omega \times [0,1]$, for some piecewise constant function $m^h_*(x,t)$.
This gives the result.

\end{proof}

\subsection{Convergence of minimizers} \label{convsec}

We now prove that minimizers of the discrete dynamic JKO scheme, Problem \ref{ddynamicJKO2}    converge to minimizers of Problem \ref{dynamicJKO} as  $h \to 0$. %Recall that  $\mathcal{C}^h$ is the constraint set of Problem \ref{ddynamicJKO} and $\mathcal{C}$ is the constraint set of Problem \ref{dynamicJKO}. 
We begin with the following lemma, showing that any $(\rho^h, m^h) \in \mathcal{C}^h$  satisfies a weak form of the PDE constraint, in the limit as $h \to 0$.

\begin{lemma}[properties of $\mathcal{C}^h$] \label{weakPDElem}
Suppose that hypotheses (H\ref{domainhyp})-(H\ref{smoothhyp})   hold, and fix $(\rho^h,m^h) \in \mathcal{C}^h$ so that $\int_0^1 \int_\Omega \Phi(\rho^h, m^h) <+\infty$ for each $h>0$. Then  $\rho^h(\cdot, 0) \to \rho_0$ in $L^2(\Omega)$, and there exist $\rho \in \P(\Omega \times [0,1])$ and $\mu \in \P(\Omega)$ so that, up to a subsequence, $\rho^h \wsto \rho$ and $\rho^h(\cdot, 1) \wsto \mu$. Furthermore, for any piecewise constant function $f^h$  with $\sup_{h>0} \|f^h \|_{L^2(\Omega \times [0,1])} +\|f^h \|_{L^2(\partial \Omega \times[0,1])}  < +\infty$, we have
\begin{align} \label{weakPDEconstraint}
\int_0^1 \int_{\Omega} \left( D_t^{-h}f^h  {\rho}^h + D_x^{-h} f^h \cdot m^h   \right)  \rd x \rd t+ \int_{\Omega} \left(  f^h(\cdot,0) \rho^{h}(\cdot,0)- f^h(\cdot,1) \rho^h( \cdot,1)  \right)  \rd x \rd t\xrightarrow{h \to 0} 0\,.
\end{align}
\end{lemma}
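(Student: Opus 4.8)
The plan is to prove the three conclusions in turn, the common engine being the two discrete integration-by-parts identities of hypothesis (H\ref{discreteophyp}), applied so that the boundary-in-time terms cancel against the explicit boundary terms in \eqref{weakPDEconstraint}.

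\textbf{Convergence at $t=0$ and weak-$*$ compactness.} For the first claim I would just use the triangle inequality: the relaxed constraint \eqref{BB82ii} gives $\|\rho^h(\cdot,0)-\rho_0^h\|_{L^2(\Omega)}\le\delta_4\to0$ by (H\ref{deltahyp}), while $\|\rho_0^h-\rho_0\|_{L^2(\Omega)}\le|\Omega|^{1/2}\|\rho_0^h-\rho_0\|_\infty\to0$ because $\rho_0\in C^1(\overline\Omega)$ and $\rho_0^h$ is its pointwise piecewise approximation. For compactness, note first that $\int_0^1\int_\Omega\Phi(\rho^h,m^h)<+\infty$ forces $\rho^h\ge0$, so each $\rho^h$ is a nonnegative measure. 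To bound the total mass I would integrate the discrete continuity equation: since $D^h_t$ is forward Euler, $\int_0^{t_k}D^h_t\rho^h\,\rd t=\rho^h(\cdot,t_k)-\rho^h(\cdot,0)$ by telescoping, and the discrete divergence theorem (the $f^h\equiv1$ case of the space identity in (H\ref{discreteophyp}), using that $D^{-h}_x$ annihilates constants) rewrites $\int_0^{t_k}\int_\Omega D^h_xm^h$ as $\int_0^{t_k}\int_{\partial\Omega}m^h\cdot\nu^h$; hence, by Cauchy--Schwarz and \eqref{BB82i},
\[ \Bigl|\int_\Omega\rho^h(x,t_k)\,\rd x-\int_\Omega\rho^h(x,0)\,\rd x\Bigr|\le C\bigl(\|D^h_t\rho^h+D^h_xm^h\|_{L^2(\Omega\times[0,1])}+\|m^h\cdot\nu^h\|_{L^2(\partial\Omega\times[0,1])}\bigr)\le C(\delta_1+\delta_2) \]
for every $k$, with $C$ depending only on $|\Omega\times[0,1]|$ and $|\partial\Omega\times[0,1]|$. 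Since $\int_\Omega\rho^h(x,0)\,\rd x\to\int_\Omega\rho_0\,\rd x=1$ by the first part, the masses $\int_0^1\int_\Omega\rho^h\,\rd x\,\rd t$ and $\int_\Omega\rho^h(\cdot,1)\,\rd x$ both tend to $1$, so $\{\rho^h\}$ and $\{\rho^h(\cdot,1)\}$ are bounded families of nonnegative measures on the compact sets $\overline\Omega\times[0,1]$ and $\overline\Omega$. By Banach--Alaoglu I would extract a subsequence with $\rho^h\wsto\rho$ and $\rho^h(\cdot,1)\wsto\mu$, and testing against the constant $1$ identifies $\rho\in\P(\Omega\times[0,1])$ and $\mu\in\P(\Omega)$.

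\textbf{The weak PDE identity.} Integrating the time identity of (H\ref{discreteophyp}) over $\Omega$ and the space identity over $[0,1]$ gives
\[ \int_0^1\!\!\int_\Omega D^{-h}_t f^h\rho^h=\int_\Omega\bigl[f^h\rho^h\bigr]_0^1-\int_0^1\!\!\int_\Omega D^h_t\rho^h f^h,\qquad \int_0^1\!\!\int_\Omega D^{-h}_xf^h\cdot m^h=\int_0^1\!\!\int_{\partial\Omega}f^hm^h\cdot\nu^h-\int_0^1\!\!\int_\Omega D^h_xm^h f^h. \]
Adding these to the explicit term $\int_\Omega\bigl(f^h(\cdot,0)\rho^h(\cdot,0)-f^h(\cdot,1)\rho^h(\cdot,1)\bigr)\,\rd x$, the $t=0$ and $t=1$ contributions cancel exactly, so the left-hand side of \eqref{weakPDEconstraint} equals
\[ -\int_0^1\!\!\int_\Omega\bigl(D^h_t\rho^h+D^h_xm^h\bigr)f^h\,\rd x\,\rd t+\int_0^1\!\!\int_{\partial\Omega}f^h\,(m^h\cdot\nu^h)\,\rd x\,\rd t. \]
By Cauchy--Schwarz and \eqref{BB82i} the first term is at most $\delta_1\sup_h\|f^h\|_{L^2(\Omega\times[0,1])}\to0$, and the second is at most $\delta_2\|f^h\|_{L^2(\partial\Omega\times[0,1])}$; since $f^h$ will always be the pointwise approximation of a fixed continuous test function, $\|f^h\|_{L^2(\partial\Omega\times[0,1])}\le|\partial\Omega\times[0,1]|^{1/2}\|f\|_\infty$ is bounded in $h$, so this term also vanishes, which yields \eqref{weakPDEconstraint}.

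\textbf{Main obstacle.} The genuinely delicate point is the boundary-flux term: a piecewise constant $f^h$ concentrated on boundary cells obeys $\|f^h\|_{L^2(\partial\Omega)}\sim(\Delta x)^{-1/2}\|f^h\|_{L^2(\Omega)}$, so it is \emph{not} controlled by the $L^2(\Omega\times[0,1])$ bound on $f^h$ alone — one really needs the uniform $L^\infty$ bound available for pointwise approximations of continuous functions (equivalently, a uniform trace bound). Everything else is bookkeeping: tracking which relaxation parameter $\delta_i$ governs which discrete error, and checking that $D^{-h}_t$ and $D^{-h}_x$ kill constants so that the discrete Gauss--Green formula is available.
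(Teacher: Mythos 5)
Your proof is correct and follows essentially the same route as the paper: triangle inequality with $\delta_4$ for the initial slice, nonnegativity from $\Phi<+\infty$ plus a uniform mass bound to get weak-$*$ compactness, and summation by parts via (H\ref{discreteophyp}) combined with Cauchy--Schwarz and \eqref{BB82i} for the weak identity. Two points of divergence are worth recording. First, you obtain the uniform mass bounds for $\rho^h$ and $\rho^h(\cdot,1)$ directly from the relaxed continuity and flux constraints (parameters $\delta_1,\delta_2$) by telescoping the forward-Euler time derivative, whereas the paper uses the separately imposed mass constraint \eqref{BB82ii} (parameter $\delta_3$) for the space-time mass and then the $f^h\equiv1$ case of \eqref{weakPDEconstraint} for the $t=1$ slice; both work, and your variant makes explicit why the mass constraint is in principle redundant (cf.\ Remark \ref{relaxpderemark}). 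Second, and more substantively, you retain the boundary-flux term $\int_0^1\int_{\partial\Omega}f^h\,m^h\cdot\nu^h$ produced by the discrete Gauss--Green formula and bound it by $\delta_2\|f^h\|_{L^2(\partial\Omega\times[0,1])}$; the paper's displayed computation silently drops this term (and writes $\delta_4$ where $\delta_1$ is meant), so your treatment is the more careful one. Your caveat is also apt: the trace norm of $f^h$ is not controlled by $\sup_{h>0}\|f^h\|_{L^2(\Omega\times[0,1])}$ alone, so as literally stated the lemma needs either a uniform $L^\infty$ (or trace) bound on $f^h$ or a condition such as $\delta_2 = o\bigl((\Delta x)^{1/2}\bigr)$; this is harmless in practice because the lemma is only invoked (in Theorem \ref{gammaconvergence}) with $f^h$ a pointwise piecewise constant approximation of a fixed smooth test function, exactly the situation you single out.
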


\begin{proof}
By hypothesis (H\ref{initialhyp}),  $ \rho^h_0 \to \rho_0$ uniformly on $\overline{\Omega}$. Likewise, the constraint on the initial data (\ref{BB82ii}) and  (H\ref{deltahyp}) ensure  $\lim_{h \to 0} \| \rho^h(\cdot, 0) -  {\rho}_0^h \|_{L^2(\Omega)} \leq \lim_{h \to 0} \delta_4 = 0$. Thus,  $\rho^h(\cdot, 0) \to  \rho_0$ in ${L^2(\Omega)}$. 

We now turn to equation (\ref{weakPDEconstraint}). By the PDE constraint and boundary conditions  (\ref{BB82i}) and  summation by parts, via hypotheses (H\ref{discreteophyp}),
\begin{align*}
&\left| \int_0^1 \int_{\Omega} \left( D_t^{-h}f^h  {\rho}^h + D_x^{-h} f^h \cdot m^h   \right)  \rd x \rd t + \int_{\Omega} \left(  f^h(\cdot,0) \rho^{h}(\cdot,0)- f^h(\cdot,1) \rho^h( \cdot,1)  \right)  \rd x \right| \\
&\quad=  \left| \int_0^1 \int_\Omega \left( f^h D_t^h \rho^h + f^h D_x^{h} m ^h \right) \rd x \rd t - \int_0^1 \int_{\partial \Omega } f^h m^h \cdot \nu^h \rd x \right| \\
& \quad \leq \|f^h \|_{L^2(\Omega \times[0,1])} \| D_t^h \rho^h+ D_x^{h} m ^h \|_{L^2(\Omega \times[0,1])} + \|f^h \|_{L^2(\partial \Omega \times [0,1])} \|m^h \cdot \nu^h \|_{L^2(\partial \Omega \times [0,1])} \\
&\quad \leq \delta_4 \|f^h \|_{L^2(\Omega \times[0,1])} + \delta_2 \|f^h \|_{L^2(\partial \Omega \times[0,1])} \xrightarrow{h \to 0} 0 ,
\end{align*}
where, in the last line, we use that (H\ref{deltahyp}) ensures $\delta_2, \delta_4 \to 0$ and the fact that $f^h$ is bounded uniformly in $h$ in $L^2(\Omega \times [0,1]$ and $L^2(\partial \Omega \times[0,1])$.

Next, we show that  there exist $\rho \in \P(\Omega \times [0,1])$ and $\mu \in \P(\Omega)$ so that, up to a subsequence, $\rho^h \wsto \rho$ and $\rho^h(\cdot, 1) \wsto \mu$.
By H\"older's inequality and the mass constraint (\ref{BB82ii}),
\begin{align*}
  \left\| \int_\Omega \rho^h(x,\cdot ) \rd x -  \int_{\Omega}  {\rho}^h_0(x) \rd x \right\|_{L^1([0,1])} \leq  \left\| \int_\Omega \rho^h(x,\cdot ) \rd x -  \int_{\Omega}  {\rho}^h_0(x) \rd x \right\|_{L^2([0,1])}  \leq  \delta_3 \xrightarrow{h \to 0}  0 ,
\end{align*}
where, in the last line, we use that (H\ref{deltahyp}) ensures $\delta_3 \to 0$.  
Since hypothesis (H\ref{initialhyp}) ensures $\rho_0^h \to \rho_0$ uniformly and $\int_\Omega \rho_0 = 1$, we obtain,
\begin{align*} %\label{massconvergencefulldomain}
 \int_0^1 \int_\Omega \rho^h(x, s) \rd x \rd s \to 1 .
\end{align*}
Furthermore, since  $\int_0^1 \int_\Omega \Phi(\rho^h,m^h) < +\infty$ for each $h >0$, we must have  $\rho^h \geq 0$ on $\Omega \times [0,1]$, and  the above equation ensures $\sup_{h >0} \| \rho^h\|_{L^1(\Omega \times[0,1])} < +\infty$. Thus,  classical functional analysis results ensure there exists a subsequence that converges to some $\rho \in \P(\Omega \times [0,1])$ in the   weak-* topology  (see, e.g., \cite[Section 3]{brezis2010functional}).

Finally, taking $f^h \equiv 1$ in equation (\ref{weakPDEconstraint}) gives,
\begin{align*}
 \lim_{h \to 0} \int_\Omega     \rho^{h}(\cdot,0) - \rho^{h}(\cdot,1)  \rd x= 0 \quad \implies \quad \lim_{h \to 0} \int_\Omega \rho^{h}(\cdot,1) \rd x = 1  \implies \sup_{h >0} \|\rho^h(\cdot, 1)\|_{L^1(\Omega)} < +\infty.
  \end{align*}
Arguing as above, we obtain that, up to a further subsequence, $\rho^h(\cdot, 1) \wsto \mu(\cdot)$ for $\mu \in \P(\Omega)$. 
\end{proof}

We now prove that the  discrete energies $\energy^h$ are lower semicontinuous along weak-* convergent sequences.
\begin{proposition}[Lower semicontinuity  of energies along weak-* convergent sequences] \label{energygamma}
Suppose that hypotheses (H\ref{domainhyp})-(H\ref{smoothhyp})   hold. Then, for any sequence of piecewise constant functions $\rho^h: \Omega \to \mathbb{R}$ such that $\rho^h \wsto \rho$, we have $\liminf_{h \to 0} \energy^h(\rho^h) \geq  \energy(\rho )$.
\end{proposition}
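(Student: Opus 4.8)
We sketch the argument (a full proof would follow these lines). The plan is to treat the three admissible forms of the discrete energy from hypothesis (H\ref{energyhyp}) one at a time, reducing each to a combination of (i) the classical weak-$*$ lower semicontinuity of the internal energy term and (ii) the fact that linear and bilinear functionals are continuous under weak-$*$ convergence when paired against uniformly convergent coefficients. Before splitting into cases, I would pass to a subsequence along which $\liminf_h \energy^h(\rho^h)$ is attained and assume this value is finite, since otherwise there is nothing to prove. The weak-$*$ convergence $\rho^h \wsto \rho$ together with the uniform boundedness principle gives $\sup_h \|\rho^h\|_{L^1(\Omega)} < +\infty$, and, when $U \not\equiv 0$, finiteness of $\energy^h(\rho^h)$ together with $U \in C([0,+\infty))$ forces $\rho^h \geq 0$ on $\Omega$ (when $U \equiv 0$ the internal term is absent and the remaining terms are continuous for measures of any sign). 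In particular all the measures involved live on the compact set $\overline{\Omega}$ (or $\overline{\Omega}\times\overline{\Omega}$) with uniformly bounded mass, which is what makes the compactness and density arguments below go through.

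For energies of the form (H\ref{energyhyp}a) I would write $\F^h(\rho^h) = \int_\Omega U(\rho^h)\,\rd x + \int_\Omega V^h\rho^h\,\rd x + \frac12 \iint_{\Omega\times\Omega} W^h(x,y)\,\rho^h(x)\rho^h(y)\,\rd x\,\rd y$ and handle the three pieces separately. The internal energy $\int_\Omega U(\rho^h)\,\rd x$ is lower semicontinuous along weak-$*$ convergent sequences by the standard convexity-plus-superlinear-growth argument \cite[Remark 9.3.8]{AGS}, \cite[Example 2.36]{ambrosio2000functions}; if the limit $\rho$ fails to be absolutely continuous the liminf of this term is $+\infty$, matching the convention $\energy(\rho) = +\infty$ off $\P_{ac}(\Omega)$. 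For the drift term, the uniform convergence $V^h \to V$ from (H\ref{energyhyp}) and $\rho^h \wsto \rho$ give $\int_\Omega V^h\rho^h \to \int_\Omega V\rho$. For the interaction term, I would first show $\rho^h \Kron \rho^h \wsto \rho \Kron \rho$ on $\overline{\Omega}\times\overline{\Omega}$: this is immediate for tensor-product test functions $\sum_i \phi_i \Kron \psi_i$ by factoring the integral, and extends to all continuous test functions by a Stone--Weierstrass density argument together with the uniform mass bound; combined with the uniform convergence $W^h(x,y) \to W(x-y)$ from (H\ref{energyhyp}), this yields convergence of the bilinear term to $\frac12 \iint_{\Omega\times\Omega} W(x-y)\rho(x)\rho(y)$. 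Adding the three contributions and using $\liminf(a_h+b_h) \geq \liminf a_h + \lim b_h$ gives $\liminf_h \F^h(\rho^h) \geq \F(\rho) = \energy(\rho)$. Case (H\ref{energyhyp}b) follows the same pattern: writing $\mathcal{H}^h_{\rho_0}(\rho^h) = \frac12\F^h(\rho^h) + \frac12 \int_\Omega g^h\rho^h$ with $g^h$ the piecewise constant function carrying the coefficients appearing in (H\ref{energyhyp}b), one has $g^h \to g := U'(\rho_0) + V + W*\rho_0$ uniformly (using $\rho_0^h \to \rho_0$ uniformly from (H\ref{initialhyp}), $\rho_0 \in C^1(\overline{\Omega})$, and that $U'\circ\rho_0$ is continuous, which holds when $U \equiv 0$ or $\inf_\Omega \rho_0 > 0$), so $\int_\Omega g^h\rho^h \to \int_\Omega g\rho$ and the liminf bound for $\frac12\F^h$ carries over.

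Case (H\ref{energyhyp}c) is easier and essentially unrelated to convexity: since $\G^h_{\rho_1}$ takes only the values $0$ and $+\infty$, finiteness of the liminf forces $\|\rho^h - \rho_1^h\|_{L^2(\Omega)} \leq \delta_5$ along a further subsequence. Since $\rho_1^h \to \rho_1$ uniformly (hence in $L^2(\Omega)$) by the assumptions on $\rho_1$ in (H\ref{energyhyp}) and $\delta_5 \to 0$ by (H\ref{deltahyp}), this gives $\rho^h \to \rho_1$ in $L^2(\Omega)$, so $\rho = \rho_1$ by uniqueness of weak-$*$ limits, and therefore $\energy(\rho) = \G_{\rho_1}(\rho_1) = 0 = \liminf_h \energy^h(\rho^h)$.

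The step I expect to require the most care is the interaction term in cases (a) and (b): one must justify that the \emph{product} measures $\rho^h \Kron \rho^h$ converge weak-$*$ to $\rho \Kron \rho$ (not merely that each factor converges), for which the uniform mass bound and a density argument are both needed, and one must make sure the convention $\energy = +\infty$ off $\P_{ac}(\Omega)$ is compatible with the cited internal-energy lower semicontinuity statement -- this compatibility is exactly where the superlinear growth of $U$ (via equi-integrability / Dunford--Pettis) enters. A secondary point worth flagging is the continuity of $g^h$ in case (b) when $U$ is the Boltzmann entropy ($m=1$) and $\rho_0$ is permitted to vanish; this degeneracy does not arise under the positivity built into the smooth-minimizer hypothesis (H\ref{smoothhyp}).
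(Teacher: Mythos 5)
Your proposal is correct and follows essentially the same route as the paper's proof: the same three-case split according to (H2a)--(H2c), passing to the limit in the drift, interaction, and linearized terms via uniform convergence of $V^h$, $W^h$, and $U'(\rho_0^h)$ (using $\rho_0>0$ from the smooth-minimizer hypothesis) paired with weak-$*$ convergence, classical lower semicontinuity for the internal energy, and the uniqueness-of-limits argument for the indicator energy. The only difference is that you spell out the justification of $\rho^h\Kron\rho^h \wsto \rho\Kron\rho$ for the interaction term, which the paper asserts without detail.
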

\begin{proof}
First, suppose the energy satisfies (H\ref{energyhyp}a).
Since   the piecewise constant approximations $\hat{V}^h$ and $\hat{W}^h$  converge to $V$ and $W$ uniformly, for any sequence $\rho^h \wsto \rho$,
\begin{align} \label{Vargue}
& \lim_{h \to 0} \int_\Omega V^h \rho^h \rd x =   \int (V^h - V) \rho^h \rd x + \int V \rho^h  \rd x = \int V \rho ~ \rd x , \\
& \lim_{h \to 0} \int_{\Omega \times \Omega} W^h(x,y) \rho^h(x) \rho^h(y) \rd x  \rd y = \int_{\Omega \times \Omega} W(x-y) \rd \rho(x) \rd \rho(y) . \label{Wargue}
\end{align}
Furthermore,  our assumptions on $U$ guarantee that the internal energy term is lower semicontinuous with respect to weak-* convergence \cite[Remark 9.3.8]{AGS}, so $\liminf_{h \to 0} \int_\Omega U(\rho^h(x)) \rd x \geq \int_\Omega U(\rho(x)) \rd x$. Combining this with equations (\ref{Vargue}-\ref{Wargue}) gives the result.

Next, suppose the energy satisfies (H\ref{energyhyp}b).  Since  $\rho_0>0$ on the compact set $\overline{\Omega}$ and $U'$ is uniformly continuous on  $\rho_0(\overline{\Omega}) \subset (0, +\infty)$, the fact that hypothesis (H\ref{initialhyp}) ensures $\hat{\rho}_0^h \to \rho_0$ uniformly ensures $U'(\hat{\rho}_0^h) \to U'(\rho_0)$ uniformly. Therefore,
\begin{align*} \lim_{h \to 0} \int_\Omega U'(\hat{\rho}_0^h) \rho^h  \rd x= \int_\Omega U'(\rho_0) \rho \rd x. %\label{Uargue2}
\end{align*}
Likewise, since  $\hat{V}^h$ and $\hat{W}^h$ converge to $V$ and $W$ uniformly, we also have
\begin{align}
 \lim_{ h \to 0} \int_\Omega \left(  \hat{V}^h(x)  +\int_\Omega \hat{W}^h(x,y) \rho^h_0(y) \rd y  \right) \rho^h(x)\rd x =  \int_\Omega \left(   V(x)  +\int_\Omega W(x,y) \rho_0(y) \rd y  \right) \rho(x) \rd x .  \label{VWargue2}
 \end{align}
Combining these   limits with the $\liminf$ inequality for energies of the form (H\ref{energyhyp}a) gives the result.

Finally, suppose the energy satisfies (H\ref{energyhyp}c). Without loss of generality, we may assume that $\liminf_{h \to 0} \G^h_{\rho_1}(\rho^h) <+\infty$, so that up to a subsequence, $\G^h_{\rho_1}(\rho^h) \equiv 0$ and  $ \lim_{h \to 0} \|\rho^h - \rho^h_1 \|_{L^2(\Omega)} = 0$.
By uniqueness of limits,  $\rho = \rho_1$. Thus, since $\G^h_{\rho_1} \geq 0$, we have $\liminf_{h \to 0} \G^h_{\rho_1}(\rho^h) \geq 0 = \G_{\rho_1}(\rho) $.

\end{proof}

We now apply Proposition \ref{energygamma} to prove the $\Gamma$-convergence of  Problem \ref{ddynamicJKO2}   to Problem \ref{dynamicJKO}.

\begin{theorem}[$\Gamma$-convergence of discrete to continuum JKO] \label{gammaconvergence}
Suppose hypotheses (H\ref{domainhyp})-(H\ref{smoothhyp}) hold.

\begin{enumerate}[(a)]
\item If $(\rho^h, m^h) \in \mathcal{C}^h$  with $(\rho^h, m^h) \wsto (\rho,m)$, then $(\rho,m) \in \mathcal{C}$  and
\[ \liminf_{h \to 0} \int_0^1 \int_\Omega \Phi( \rho^h, m^h) \rd x \rd t + 2 \tau  \energy^h(\rho^h(\cdot, 1)) \geq \int_0^1 \int_\Omega \Phi( \rho , m )  \rd x \rd t +  2 \tau \energy(\rho(\cdot, 1)) . \] \label{liminfineq}
\item For any $(\rho, m) \in \mathcal{C}$   satisfying $\rho \in C^2([0,1]; C^1(\overline{\Omega}))$, $\rho >0$, and $m \in C([0,1]; C^2(\overline{\Omega})$, there exists a sequence $(\tilde{\rho}^h,
\tilde{m}^h) \in \mathcal{C}^h$   so that $(\tilde{\rho}^h ,\tilde{m}^h) \to (\rho,m)$ uniformly and
\[ \limsup_{h \to 0} \int_0^1 \int_\Omega \Phi( \tilde{\rho}^h, \tilde{m}^h) \rd x \rd t  +  2 \tau \energy^h(\tilde{\rho}^h(\cdot, 1)) \leq \int_0^1 \int_\Omega \Phi( \rho , m )  \rd x \rd t +  2 \tau \energy(\rho(\cdot, 1))  . \] \label{limsupineq}
\end{enumerate}
\end{theorem}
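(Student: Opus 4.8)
The plan is to prove the two halves of the $\Gamma$-convergence statement separately, leveraging the already-established Lemma \ref{weakPDElem}, Proposition \ref{specialelement}, and Proposition \ref{energygamma}.

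For part (\ref{liminfineq}), the $\liminf$ inequality, I would first show that the weak-* limit $(\rho,m)$ of any sequence $(\rho^h,m^h)\in\mathcal{C}^h$ with bounded objective actually lies in $\mathcal{C}$. Without loss of generality one may assume $\liminf_{h\to 0}\int_0^1\int_\Omega\Phi(\rho^h,m^h)<+\infty$ (otherwise there is nothing to prove), and combined with the energy lower bound this gives a uniform bound on $\int_0^1\int_\Omega\Phi(\rho^h,m^h)$; since $\Phi(\rho^h,m^h)\geq |m^h|^2/\rho^h$ and the masses are controlled by Lemma \ref{weakPDElem}, this yields equi-integrability of $m^h$ (a de la Vall\'ee--Poussin / superlinearity argument), so $m^h\wsto m$ for some finite measure $m$. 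To verify the continuum PDE constraint \eqref{BB2}--\eqref{BB3p5}, I would take a fixed test function $f\in C^\infty_c(\Rd\times[0,1])$ and let $f^h$ be its pointwise piecewise approximation; by hypothesis (H\ref{discreteophyp}), $D^{-h}_t f^h\to\partial_t f$ and $D^{-h}_x f^h\to\grad f$ uniformly, so one can pass to the limit in the discrete weak formulation \eqref{weakPDEconstraint}, using $\rho^h\wsto\rho$, $m^h\wsto m$, $\rho^h(\cdot,0)\to\rho_0$ in $L^2$ (from Lemma \ref{weakPDElem}), and $\rho^h(\cdot,1)\wsto\mu$, to obtain the weak PDE identity with $\rho(\cdot,1)$ replaced by the limit measure $\mu$; a standard argument identifying traces of absolutely continuous curves then gives $\mu=\rho(\cdot,1)$ and $\rho(\cdot,0)=\rho_0$. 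Once $(\rho,m)\in\mathcal{C}$ is established, the inequality follows by adding the lower semicontinuity of $(\rho,m)\mapsto\int_0^1\int_\Omega\Phi(\rho,m)$ along weak-* convergence \cite[Example 2.36]{ambrosio2000functions} to the lower semicontinuity of the discrete energies from Proposition \ref{energygamma} applied to $\rho^h(\cdot,1)\wsto\rho(\cdot,1)$.

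For part (\ref{limsupineq}), the recovery sequence, I would take $(\tilde{\rho}^h,\tilde{m}^h)\in\mathcal{C}^h$ to be exactly the element constructed in Proposition \ref{specialelement}, which converges uniformly to $(\rho,m)$ and satisfies the uniform positivity bound \eqref{rhohunifpos} (and, in case (H\ref{energyhyp}c), the bound \eqref{specialelementc}, so its discrete energy vanishes). For the $\Phi$ term, since $\Phi$ is continuous and $C^1$ on the region $\{\rho\geq c>0\}$ — where $c$ is the uniform lower bound from \eqref{rhohunifpos}, which also covers the range of $\rho$ itself — and since $(\tilde{\rho}^h,\tilde{m}^h)\to(\rho,m)$ uniformly on the compact set $\overline{\Omega}\times[0,1]$ (with $m$ bounded), we get $\Phi(\tilde{\rho}^h,\tilde{m}^h)\to\Phi(\rho,m)$ uniformly, hence $\int_0^1\int_\Omega\Phi(\tilde{\rho}^h,\tilde{m}^h)\to\int_0^1\int_\Omega\Phi(\rho,m)$ by dominated convergence on a bounded domain. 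For the energy term, uniform convergence $\tilde{\rho}^h(\cdot,1)\to\rho(\cdot,1)$ together with the uniform convergence of $V^h,W^h$ and (for (H\ref{energyhyp}b)) the uniform continuity of $U'$ on the compact positive range of $\rho_0$ yields $\energy^h(\tilde{\rho}^h(\cdot,1))\to\energy(\rho(\cdot,1))$; in case (H\ref{energyhyp}c) both sides are zero for $h$ small. Adding the two limits gives the desired $\limsup$ bound (in fact, equality in the limit).

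The main obstacle is the first step of part (\ref{liminfineq}): showing the weak-* limit $(\rho,m)$ genuinely belongs to the continuum constraint set $\mathcal{C}$. The delicate points are (i) extracting a weak-* limit for $m^h$ — this requires an equi-integrability estimate derived from the superlinear growth of $\Phi$ in $m$ combined with the mass bound, so that the momenta do not concentrate or escape; and (ii) correctly identifying the time-$1$ boundary trace, i.e.\ showing that the weak-* limit $\mu$ of $\rho^h(\cdot,1)$ coincides with the time-$1$ slice of the limiting curve $\rho$, which uses that the relaxed discrete PDE constraint \eqref{BB81}--\eqref{BB82} forces $\rho^h$ to be (approximately) absolutely continuous in time with the correct endpoint behavior as $\delta_i\to 0$. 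Everything else reduces to passing to limits in linear expressions using the uniform convergence of the finite-difference operators from (H\ref{discreteophyp}) and the already-proven propositions.
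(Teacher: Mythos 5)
Your proposal is correct and follows essentially the same route as the paper: for part (\ref{liminfineq}) you pass to the limit in the relaxed discrete weak formulation via Lemma \ref{weakPDElem} and (H\ref{discreteophyp}), identify $\mu=\rho(\cdot,1)$ through the absolute continuity in time of the limiting curve, and combine lower semicontinuity of the $\Phi$-functional with Proposition \ref{energygamma}, while for part (\ref{limsupineq}) you use the recovery sequence of Proposition \ref{specialelement}, the uniform positivity bound (\ref{rhohunifpos}) to control $\Phi$ on $\{\rho\geq c\}$, and uniform convergence to pass to the limit in the energies, exactly as the paper does. The only superfluous step is your equi-integrability/compactness argument for $m^h$ in part (\ref{liminfineq}): the weak-$*$ convergence of $(\rho^h,m^h)$ is already part of the hypothesis there, and that compactness is only needed later, in Theorem \ref{convergence}, where the paper derives it from the uniform $\Phi$-bound via H\"older's inequality.
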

\begin{proof}
We first prove part (\ref{liminfineq}). Suppose $(\rho^h, m^h) \in \mathcal{C}^h$,  with $\rho^h \wsto \rho$ and $m^h \wsto m$. We begin by showing that the limit $(\rho,m)$ belongs to $\mathcal{C}$. Fix $f \in C^\infty(\Omega \times [0,1])$ and let $f^h$ be a pointwise piecewise constant approximation of $f$. (See equation (\ref{pointwisepiecewise}).) By Lemma \ref{weakPDElem} and hypothesis (H\ref{discreteophyp}),
\begin{align*}
\int_0^1 \int_\Omega \left( f_t  {\rho} +\grad f \cdot m \right) \rd x \rd t  + \int_\Omega f(\cdot, 0) \rho(\cdot, 0) - f(\cdot, 1) \mu  \rd x  = 0 .
\end{align*}
We conclude that $(\rho,m)$ satisfies the PDE constraint in the sense of distributions (\ref{BB8}), which gives  $ {\rho} \in AC([0,1], \P(\Omega))$ \cite[Lemma 8.1.2]{AGS}. In particular, since $\rho$ is continuous in time, we have that the $\mu$ defined in Lemma \ref{weakPDElem} satisfies $\mu = \rho(\cdot, 1)$.

We now consider the inequality in part (a). Since the integral functional $(\rho,m) \mapsto \int_0^1 \int_\Omega \Phi(\rho,m)$ is lower semicontinuous with respect to weak-* convergence of measures \cite[Example 2.36]{ambrosio2000functions}, we immediately obtain
\begin{align*} %\label{liminf1} 
\liminf_{h \to 0} \int_0^1 \int_\Omega \Phi( \rho^h, m^h)  \rd x \rd t \geq \int_0^1 \int_\Omega \Phi( \rho , m ) \rd x \rd t   . \end{align*}
This ensures $m \in L^1([0,1],L^2(\rho^{-1}))$ and completes the proof that $(\rho,m) \in \mathcal{C}$.
Finally, since Lemma \ref{weakPDElem} ensures $\rho^h(\cdot, 1) \wsto \mu = \rho(\cdot, 1)$, applying Proposition \ref{energygamma}  gives
\[ \liminf_{h \to 0}    \energy^h(\rho^h(\cdot, 1))  \geq   \energy(\rho(\cdot, 1)) , \]
which completes the proof of part (\ref{liminfineq}).

We now turn to part (\ref{limsupineq}). Let $(\tilde{\rho}^h, \tilde{m}^h) \in \mathcal{C}^h$ be the sequence constructed in Proposition \ref{specialelement}, so $(\tilde{\rho}^h, \tilde{m}^h) \to (\rho,m)$ uniformly. By inequality (\ref{rhohunifpos}), there exists $c>0$ so that  $ {\rho}^h(x,t) \geq c $ for $h$ sufficiently small.  Therefore, 
\begin{align*}
\left| \int_0^1 \int_\Omega \Phi(\tilde{\rho}^h, \tilde{m}^h) - \int_0^1 \int_\Omega \Phi(\rho, m) \right| 
&\leq |\Omega| \| \grad_{\rho, m} \Phi \|_{L^\infty(\{ \rho \geq c\})} \left( \|\tilde{m}^h - m\|_{\infty} +  \|\tilde{\rho}^h - \rho\|_{\infty} \right) \xrightarrow{h \to 0} 0.
\end{align*}

It remains to show that
\begin{align*} %\label{energylimsup}
 \limsup_{h \to 0} \energy^h (\tilde{\rho}^h(\cdot, 1)) \leq \energy (\rho(\cdot, 1)) . 
 \end{align*}
 First, suppose  the energy satisfies either (H\ref{energyhyp}a) or (H\ref{energyhyp}b). By equations (\ref{Vargue})-(\ref{VWargue2}), which hold for any weak-* convergent sequence, and the fact that $U'(\tilde{\rho}^h(\cdot, 0)) \to U'(\rho(\cdot, 0) )$ uniformly,   it suffices to show 
 \[ \limsup_{h \to 0} \int_\Omega U(\tilde{\rho}^h(\cdot,1)) \rd x   \leq \int_\Omega U(\rho(\cdot,1)) \rd x. \]
 Since $U \in C([0, +\infty])$, $\rho(\cdot, 1) \in L^\infty(\Rd)$, and $\tilde{\rho}^h(\cdot, 1) \to \rho(\cdot,1)$ uniformly, $ U(\tilde{\rho}^h(\cdot,1))  \to \int U(\rho(\cdot,1)) $ uniformly, which gives the result.

Finally, suppose  the energy satisfies (H\ref{energyhyp}c). Without loss of generality, suppose $\energy(\rho(\cdot,1)) = \G_{\rho_1}(\rho(\cdot, 1))< +\infty$. Inequality (\ref{specialelementc}) ensures that, for $h$ sufficiently small,
\[ \| \tilde{\rho}^h(\cdot, 1) - \rho^h_1\|_{L^2(\Omega)} \leq \delta_5 . \]
By definition of  $\G_{\rho_1}^h$, this implies $\G^h_{\rho_1}(\tilde{\rho}^h(\cdot, 1)) \equiv 0$. Therefore,
\[ \limsup_{h \to 0} \G^h_{\rho_1}(\tilde{\rho}^h(\cdot, 1)) = 0 \leq \G_{\rho_1}(\rho_1) , \]
which gives the result.
\end{proof}

We conclude this section by applying the $\Gamma$-convergence proof from Theorem \ref{gammaconvergence} to prove that any sequence of minimizers of the discrete Problem \ref{ddynamicJKO2}   converges, up to a subsequence, to a minimizer of the continuum Problem \ref{dynamicJKO}.

\begin{theorem}[Convergence of minimizers] \label{convergence}
Suppose that hypotheses (H\ref{domainhyp})-(H\ref{initialhyp})   hold. Then, for any  sequence of minimizers $(\rho^h, m^h)$ of Problem \ref{ddynamicJKO2}, we have,   up to a subsequence, $\rho^h  \wsto \rho $   and $m^h  \wsto m $, where  $(\rho,m)$ is a minimizer of Problem \ref{dynamicJKO}.
\end{theorem}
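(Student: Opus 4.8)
The plan is to run the usual $\Gamma$-convergence-plus-compactness argument, using the smooth, positive continuum minimizer supplied by hypothesis (H\ref{smoothhyp}) as the competitor that pins down the optimal value. First I would feed this continuum minimizer $(\brho,\bm)$ into the $\limsup$ inequality, Theorem \ref{gammaconvergence}(\ref{limsupineq}): since $(\brho,\bm)\in\mathcal{C}$ with $\brho\in C^2([0,1];C^1(\overline{\Omega}))$, $\brho>0$, $\bm\in C^1([0,1];C^2(\overline{\Omega}))$, it produces a recovery sequence $(\tilde\rho^h,\tilde m^h)\in\mathcal{C}^h$ with $\limsup_{h\to 0}\bigl(\int_0^1\!\int_\Omega \Phi(\tilde\rho^h,\tilde m^h)\,\rd x\,\rd t+2\tau\energy^h(\tilde\rho^h(\cdot,1))\bigr)\le M$, where $M:=\int_0^1\!\int_\Omega\Phi(\brho,\bm)\,\rd x\,\rd t+2\tau\energy(\brho(\cdot,1))$ is finite. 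Because each $(\rho^h,m^h)$ minimizes Problem \ref{ddynamicJKO2} while $(\tilde\rho^h,\tilde m^h)$ is admissible, the discrete objective at $(\rho^h,m^h)$ is at most $M+o(1)$; combining this with a lower bound on $\energy^h(\rho^h(\cdot,1))$ along the minimizers — obtained exactly as in the proof of Theorem \ref{minimizersexist}, using that $U,V,W$ are bounded below and the mass of $\rho^h$ is uniformly controlled by constraint (\ref{BB82ii}), or that $\G^h_{\rho_1}\ge0$ — I get the uniform bound $\sup_h\int_0^1\!\int_\Omega\Phi(\rho^h,m^h)\,\rd x\,\rd t<+\infty$.

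Next I would extract the limit. The bound on $\int\!\int\Phi(\rho^h,m^h)$ together with $\rho^h\ge0$ and the Cauchy--Schwarz estimate $\int_0^1\!\int_\Omega|m^h|\le(\int_0^1\!\int_\Omega\Phi(\rho^h,m^h))^{1/2}(\int_0^1\!\int_\Omega\rho^h)^{1/2}$, plus the uniform $L^1$ mass bound on $\rho^h$ coming from constraint (\ref{BB82ii}) and $\rho_0^h\to\rho_0$, show that $(\rho^h,m^h)$ is bounded in the space of finite measures on $\Omega\times[0,1]$. Hence, up to a subsequence, $\rho^h\wsto\rho$ and $m^h\wsto m$, and Lemma \ref{weakPDElem} simultaneously gives $\rho^h(\cdot,0)\to\rho_0$ in $L^2(\Omega)$ and a weak-* limit $\mu=\rho(\cdot,1)$ of the time-one slices. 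Applying the $\liminf$ inequality, Theorem \ref{gammaconvergence}(\ref{liminfineq}), to this subsequence yields $(\rho,m)\in\mathcal{C}$ and $\int_0^1\!\int_\Omega\Phi(\rho,m)\,\rd x\,\rd t+2\tau\energy(\rho(\cdot,1))\le\liminf_{h\to0}\bigl(\int_0^1\!\int_\Omega\Phi(\rho^h,m^h)\,\rd x\,\rd t+2\tau\energy^h(\rho^h(\cdot,1))\bigr)$.

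To finish I would chain the estimates: the left-hand side above is $\le\liminf(\cdots)\le\limsup(\cdots)\le M=\int_0^1\!\int_\Omega\Phi(\brho,\bm)\,\rd x\,\rd t+2\tau\energy(\brho(\cdot,1))$. Since $(\brho,\bm)$ is a minimizer of Problem \ref{dynamicJKO}, the right-hand side equals the minimum value of that problem, and since $(\rho,m)\in\mathcal{C}$, the left-hand side is at least that minimum value. Therefore every inequality in the chain is an equality, $(\rho,m)$ attains the infimum, and so $(\rho,m)$ is a minimizer of Problem \ref{dynamicJKO}. (By Remark \ref{uniquenessmincont} this minimizer is unique, so the convergence actually holds for the whole sequence, not merely a subsequence.)

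I expect the genuine difficulty to lie in the compactness and identification step: controlling $m^h$, whose mass is only bounded indirectly through $\Phi$, and, above all, identifying the weak-* limit $\mu$ of the time-one slices $\rho^h(\cdot,1)$ with the time-one slice $\rho(\cdot,1)$ of the limiting curve, so that the discrete energy term converges against the correct object. This identification hinges on passing the relaxed discrete continuity constraint (\ref{BB82i}) to the limit and on the time-continuity of the limiting curve $\rho\in AC([0,1];\mathcal{P}(\Omega))$ — precisely the content of Lemma \ref{weakPDElem} and the opening of the proof of Theorem \ref{gammaconvergence}(\ref{liminfineq}). Once that is secured, the remainder is the routine $\liminf$/$\limsup$ sandwich.
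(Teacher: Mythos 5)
Your proposal is correct and follows essentially the same route as the paper's proof: compactness of $\rho^h$ and identification of the time-one slice via Lemma \ref{weakPDElem}, a uniform bound on $\int_0^1\int_\Omega\Phi(\rho^h,m^h)$ by comparing the discrete minimizers with the recovery sequence for the smooth, positive continuum minimizer from (H\ref{smoothhyp}) via Theorem \ref{gammaconvergence}(\ref{limsupineq}), an $L^1$ bound on $m^h$ by Cauchy--Schwarz, and the $\liminf$/$\limsup$ sandwich from Theorem \ref{gammaconvergence}(\ref{liminfineq}) to conclude minimality. The only cosmetic difference is that you source the lower bound on $\energy^h(\rho^h(\cdot,1))$ from the argument in Theorem \ref{minimizersexist}, while the paper invokes Proposition \ref{energygamma} together with the boundedness below of $U$, $V$, $W$ (and the weak-* convergence of the time-one slices from Lemma \ref{weakPDElem}); the substance is the same.
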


Note that, if the minimizer of the continuum Problem \ref{dynamicJKO} is unique, then this theorem ensures that any sequence of minimizers of the discrete Problem \ref{ddynamicJKO} has a further subsequence that converges to this minimizer. Therefore, the sequence itself must converge to the unique minimizer of the continuum problem. (See Remark \ref{uniquenessmincont} for sufficient conditions that ensure the minimizer of the continuum problem is unique.)

\begin{proof}[Proof of Theorem \ref{convergence}]
First, note that Lemma \ref{weakPDElem} ensures that there exist $\rho \in \P(\Omega \times[0,1])$ and $\mu \in \P(\Omega)$ so that, up to a subsequence, $\rho^h \wsto \rho$ and $\rho^h(\cdot, 1) \wsto \mu$. 
In order to prove an analogous weak-* compactness result for $m^h$ we  first prove that, up to a subsequence,
\begin{align}
 \sup_{h>0}\int_0^1 \int_\Omega \Phi( \rho^h, m^h) <+\infty. \label{energybound}
 \end{align}
 By (H\ref{smoothhyp}), there exists a minimizer $(\bar{\rho}, \bar{m})$ of the continuum Problem \ref{dynamicJKO} satisfying $ \bar{\rho} \in C^2([0,1]; C^1(\overline{\Omega}))$, $ \bar{\rho} >0$, and $ \bar{m} \in C^1([0,1]; C^2(\overline{\Omega}))$. Comparing the recovery sequence  $( \tilde{\rho}^h, \tilde{m}^h) \in \mathcal{C}^h$ from  Theorem \ref{gammaconvergence}(\ref{limsupineq}) for $(\bar{ \rho}, \bar{ m})$  with the discrete minimizer $(\rho^h,m^h) \in \mathcal{C}^h$, we obtain
\begin{align} \label{almost2}
\limsup_{h \to 0} \int_0^1 \int_\Omega \Phi( {  \rho}^h,  {m}^h)  + 2\tau \energy^h( {\rho}^h(\cdot, 1)) &\leq \limsup_{h \to 0} \int_0^1 \int_\Omega \Phi( \tilde{ \rho}^h,  \tilde{ m}^h) +  2 \tau \energy^h( \tilde{ \rho}^h(\cdot, 1)) \\
&\leq \int_0^1 \int_\Omega \Phi( \bar{ \rho} , \bar{ m} ) +2 \tau  \energy(\bar{ \rho}(\cdot, 1))  .  \nonumber\end{align}
Furthermore, Proposition \ref{energygamma} ensures that
\[ \liminf_{h \to 0} 2 \tau  \energy^h(\rho^h(\cdot, 1)) \geq 2\tau \energy(\mu) , \]
which is bounded below by some constant, since hypothesis (H\ref{energyhyp}c) ensures $\energy \geq 0$ and hypotheses (H\ref{energyhyp}a) or (H\ref{energyhyp}b) ensures $\energy(\mu) >-\infty$, since $U$, $V$, and $W$ are bounded below and $U'$ is bounded below on the range of the strictly positive density $\rho_0$. Therefore, up to a subsequence, we obtain (\ref{energybound}).
  
We now deduce weak-* convergence of $m^h$.
By H\"older's inequality, the fact that $\rho^h \wsto \rho$, and the definition of $\Phi$, we have
\[ \sup_{h >0} \| m^h\|_{L^1(\Omega\times[0,1])}    \leq  \sup_{h >0 } \left( \int_0^1 \int_\Omega \Phi(\rho^h,m^h)  \right)  \left( \int_0^1 \int_\Omega 1^2 \rho^h \right)^{1/2} < +\infty .\]
Thus, up to another subsequence, $m^h \wsto m$ on $\Omega \times [0,1]$.  

It remains to show that the limit $(\rho,m)$ of $(\rho^h, m^h)$ is a minimizer of   Problem \ref{dynamicJKO}. By Theorem \ref{gammaconvergence}, part (\ref{liminfineq}), we have $(\rho, m) \in \mathcal{C}$ and
\begin{align*} %\label{almost1} 
\int_0^1 \int_\Omega \Phi( \rho , m ) + 2 \tau  \energy(\rho(\cdot, 1)) \leq \liminf_{h \to 0} \int_0^1 \int_\Omega \Phi( \rho^h, m^h) + 2 \tau  \energy^h(\rho^h(\cdot, 1))   .
\end{align*}
Combining this with inequality (\ref{almost2}) above, we conclude that  $(\rho,m) \in \mathcal{C}$ is also a minimizer of Problem \ref{dynamicJKO}, which completes the proof.
\end{proof}

%%%%%%%%%%%%%%%%%%%%%%%%%%%%%%%%%%%%%%%%%%%%%%%%%%
\section{Numerical results}\label{sec:examples}

In this section, we provide several examples demonstrating the efficiency and accuracy of our algorithms. We begin by using algorithm \ref{alg:dis} to compute Wasserstein geodesics between given source and target measures, and we then turn to algorithm \ref{alg:nonlinearJKO} to compute solutions of nonlinear gradient flows. In the following simulations, we take our computational domain $\Omega$ to be a square, imposing the no flux boundary conditions on $m$ dimension by dimension. In practice, unless otherwise specified, we always impose the discrete PDE constraint via the Crank-Nicolson finite difference operators \eqref{crankspace}, and we choose $\epsilon_1 = \epsilon_2 = \epsilon$ in the stopping criteria to be $10^{-5}$ unless otherwise specified. For the relaxation of the constraints in \eqref{BB81} and \eqref{BB82}, we choose $\delta_1 = \delta _2 = \delta_4 = \delta_5 = \delta$, and $\delta_3$ differently, as specified in each example. 
\subsection{Wasserstein geodesics}
As described in Remark \ref{W2georemark}, a particular case of our numerical scheme provides a method for computing the Wasserstein geodesic between two probability densities.
We begin by computing the Wasserstein geodesic between rescaled Gaussians in one dimension:
\begin{equation} \label{gaussian}
g_{\mu,\theta}(x) = \frac{1}{(2\pi \theta^2)^{d/2}} e^{-\frac{(x-\mu)^2}{\theta^2}}\,.
\end{equation}
The target measure is simply a translation and dilation of the initial measure, $\rho_0(x)=(0.5) g_{\mu_0, \theta_0 }(x)$ and $\rho_1(x) = (0.5) g_{\mu_1, \theta_1}(x)$.
The optimal transport map $T(x)$ from $\rho_0(x)$ to $\rho_1(x)$ is given explicitly by\footnote{One way to see that this is the unique optimal transport map from $\rho_0$ to $\rho_1$ is to note that $T \# \rho_0 = \rho_1$ and $T(x)$ is the gradient of a convex function; see, for example, \cite[Section 6.2.3]{AGS}.}
\[ 
T(x) = \frac{\theta_1}{\theta_0} (x-\mu_0) + \mu_1 . 
\]
Rewriting equation (\ref{W2geoform}) for the geodesic $\rho(x,t)$ and velocity $v(x,t)$ induced by this transport map, via the definition of the push forward, we obtain
\begin{align*} 
\rho(x,t)= \rho_0(T_t^{-1}(x)) \textrm{det}(\nabla_x T^{-1}_t)  \quad &\text{ and } \quad m(x,t) = \rho(x,t) v(x,t) =  \rho(x,t)  (T\circ T_t^{-1}(x) - T_t^{-1}(x))) , \\
T_t^{-1}(x) = &\, \frac{x+(\frac{\theta_1}{\theta_0}\mu_0-\mu_1)t}{ 1-t+ t \frac{\theta_1}{\theta_0}},  \  \textrm{det}(\nabla_x T_t^{-1}) = \frac{1}{1-t+t\frac{\theta_1}{\theta_0}} .
\end{align*}

\begin{figure}[h!]
\centering
 \textbf{Wasserstein geodesic between Gaussians } \\
\includegraphics[width=0.48\textwidth,trim={.5cm .05cm 1.3cm .6cm},clip]{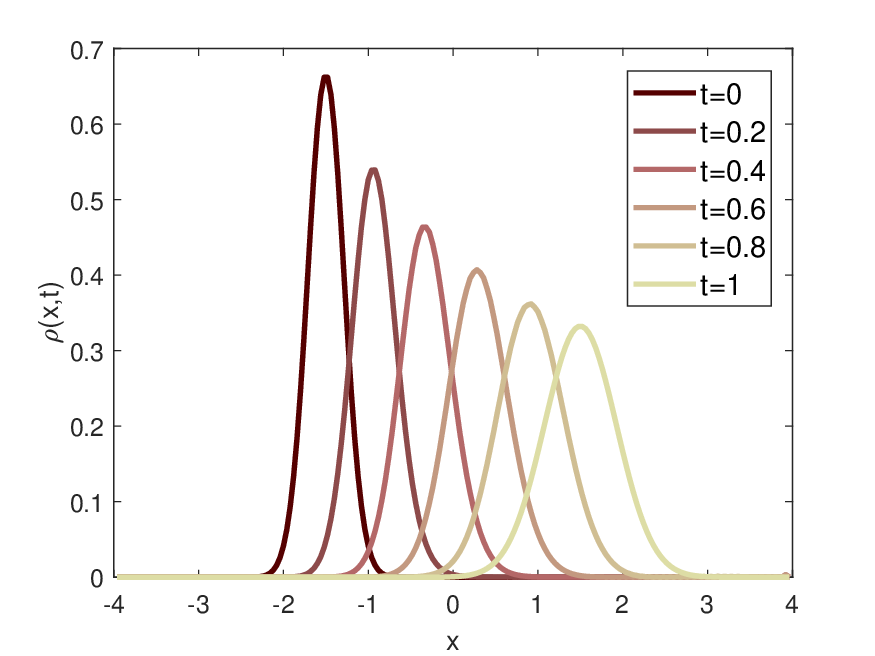}  \quad\includegraphics[width=0.48\textwidth,,trim={.5cm .05cm 1.3cm .6cm},clip]{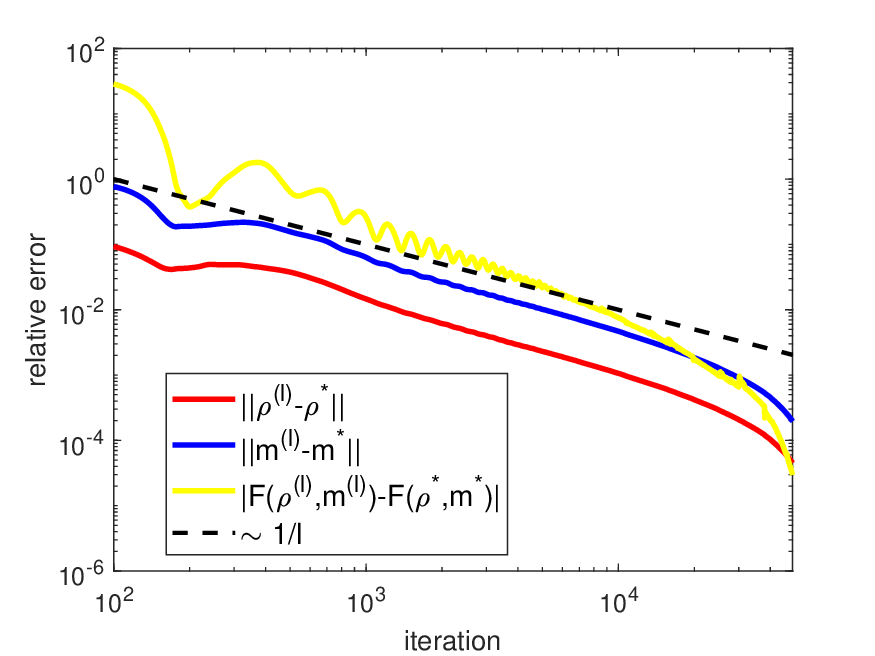}
\caption{We compute the Wasserstein geodesic between two Gaussians on the domain $\Omega = [-4,4]$, with $N_t = 20$ temporal grid points ($\Delta t = \frac{1}{20}$)  and $N_x = 200$ spatial points ($\Delta x = \frac{8}{200}$). We choose  $\sigma=0.1$ and $\sigma \lambda=1.5/\lambda_{max}(AA^t)$ and compute $10^5$ iterations. Left: evolution of geodesic from time $t =0$ to $t=1$. Right: rate of convergence of numerical solution to exact solution, as a function of the number of iterations in algorithm \ref{alg:dis}. }
\label{fig:dis1Da}
\end{figure}

In Figure \ref{fig:dis1Da}, we apply algorithm \ref{alg:dis} to compute the Wasserstein geodesic $\rho(x,t)$ between the initial and target densities (\ref{gaussian}), with means and variances $\mu_0 = -1.5, \theta_0 = 0.3, \mu_1 = 1.5,$ and $\theta_1 = 0.6$. On the left, we plot the evolution of the geodesic at various times. On the right, we plot the $\ell^1$ error of the densities, momenta, and Wasserstein distance  as a function of the number of iterations, $l$, observing a rate of convergence of order $\mathcal{O}(1/l)$ (dashed black line). Here the error is defined as 
\begin{align} \label{defnoferroriter}
\|\rho ^{(l)}- \rho^* \| = \frac{1}{N_x(N_s+1)}\sum_{k=0}^{N_s}\sum_{j = 1}^{N_x-1} |\rho_{j,k}^{(l)} - \rho^*_{j,k}| \ .
\end{align}

\begin{figure}[h!]
\centering
 \textbf{Optimal scaling of relaxation parameter $\delta$} \\
\includegraphics[width=0.48\textwidth,trim={1cm .05cm 1.1cm .6cm},clip]{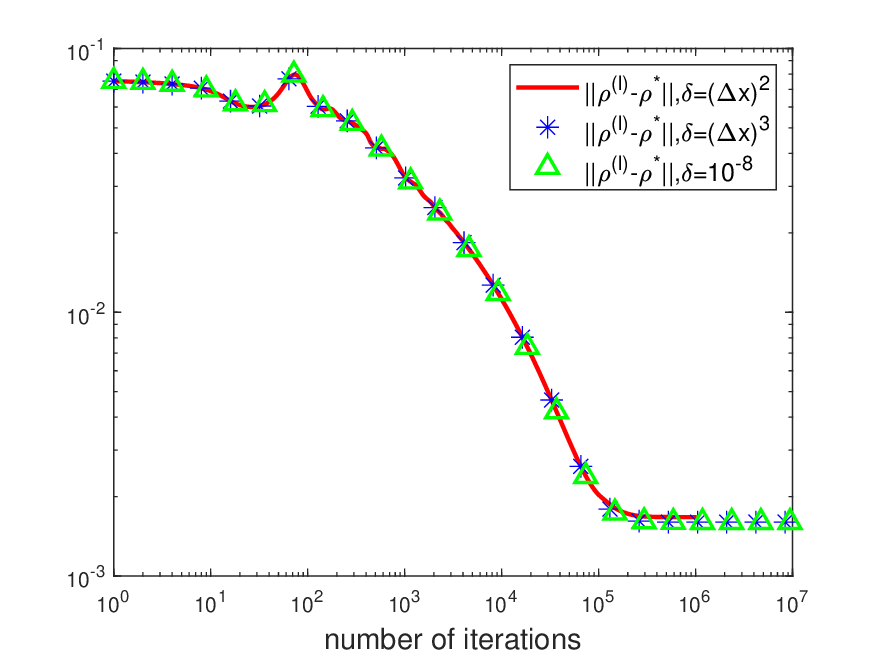}  \quad\includegraphics[width=0.48\textwidth,,trim={1cm .05cm 1.1cm .6cm},clip]{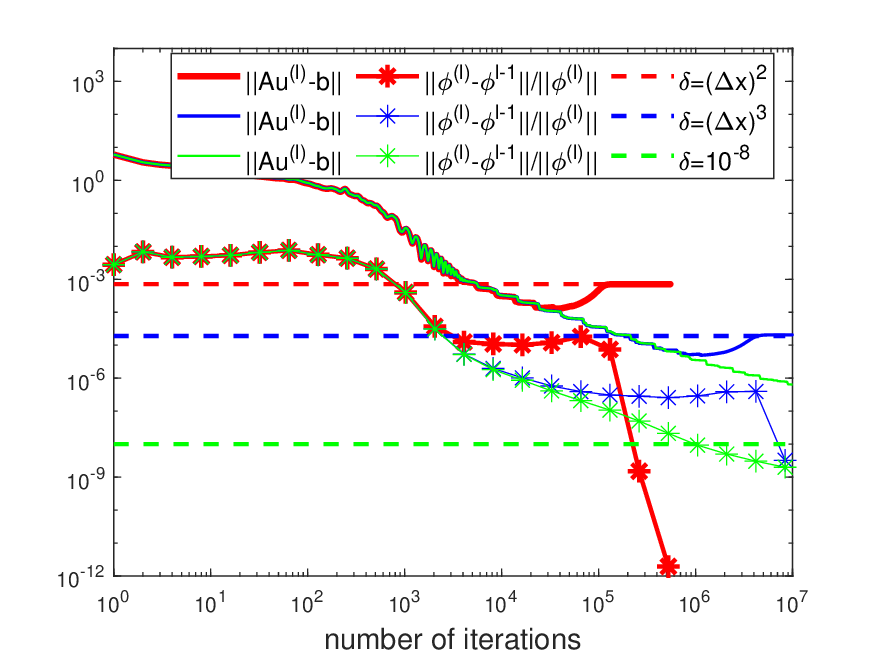}
\caption{Analysis of how the   scaling relationship between  the relaxation parameter $\delta$ and the spatial discretization $(\Delta x)$ affects the accuracy of the numerical method and the number of iterations required to converge. We contrast the choices $\delta = (\Delta x)^2$, $\delta = (\Delta x)^3$ and $\delta = 10^{-8}$ for the example of the Wasserstein distance between geodesics, illustrated in Figure \ref{fig:dis1Da}. We take  $N_t = 30$, $N_x = 300$, $\sigma=1$, $\sigma \lambda=0.99/\lambda_{max}(AA^t)$  and $\delta_3 = \delta$.}
\label{fig:optimaldelta}
\end{figure} 

In Figure \ref{fig:optimaldelta}, we illustrate how choosing the optimal scaling relationship between the relaxation parameter $\delta$ and the spatial and temporal discretizations $(\Delta x),  (\Delta t)$ allows the method to converge in fewer iterations. We contrast the choices $\delta  = (\Delta x)^2  $,  $\delta   = (\Delta x)^3$, and $\delta = 10^{-8}$, for the example of the Wasserstein distance between geodesics, illustrated in Figure \ref{fig:dis1Da}, where the outer time step $\tau = 1$, $(\Delta x) \sim (\Delta t)$, and $\delta_3 = \delta$. Based on the order of accuracy of our Crank-Nicolson approximation of the PDE constraint, we expect that  $\delta = (\Delta x)^2$ should give the optimal balance between accuracy and computational efficiency. (See Remark \ref{relaxpderemark}.) 

In the  plot on the left, we observe that for all choices of $\delta$, the  error between the numerical solution $\rho^{(l)}$ and the exact solution $\rho^*$ is identical, with the error saturating after $10^5$ iterations. Thus all three choices of $\delta$ provide the same level of accuracy, and the best way to distinguish between them is to identify which choice of $\delta$ causes the stopping criteria (\ref{cri02}-\ref{cri03}) to be satisfied in the least number of excess iterations after $10^5$. The behavior of two key stopping criteria are shown in the plot on the right--- the PDE constraint $\| A u^{(l)} - b \|$ and the convergence monitor for the relative error of the dual variables $\| \phi^{(l)} - \phi^{(l-1)} \|/\|\phi^{(l)} \|$. Of the four stopping criteria we consider (PDE constraint and three convergence monitors), these two are the last to be satisfied in all of the numerical simulations contained in this manuscript, hence these determine when our method terminates its iterations.

For the case of $\delta = (\Delta x)^2$ (red lines), we indeed observe that the PDE constraint (solid line) satisfies its stopping criteria (dashed line) by $10^4$ iterations and the dual variables (starred line) satisfy their stopping criteria of $10^{-5}$ by $10^5$ iterations. On the other hand, for the cases of $\delta = (\Delta x)^3$ (blue lines) and $\delta = 10^{-8}$ (green lines), we see that while the dual variables (starred lines) have satisfied their stopping criteria of $10^{-5}$ by $10^4$ iterations, the PDE constraints (solid lines) do not satisfy their stopping criteria (dashed lines) until   later---it takes more than $10^5$ iterations for $\delta = (\Delta x)^3$ and more than $10^7$ iterations for $\delta = 10^{-8}$. This example shows that choosing $\delta$  without respecting the order of accuracy of the finite difference approximation in the PDE constraint, one wastes computational effort without improving the accuracy of the numerical solution.

\begin{figure}[h!]
{\hspace{2cm} \textbf{Evolution of geodesic between translations of British Parliament} }
\\
\begin{picture}(155,130)
\put(15,0){\includegraphics[width=0.33\textwidth,trim={1.5cm 1.1cm 1.3cm .7cm},clip]{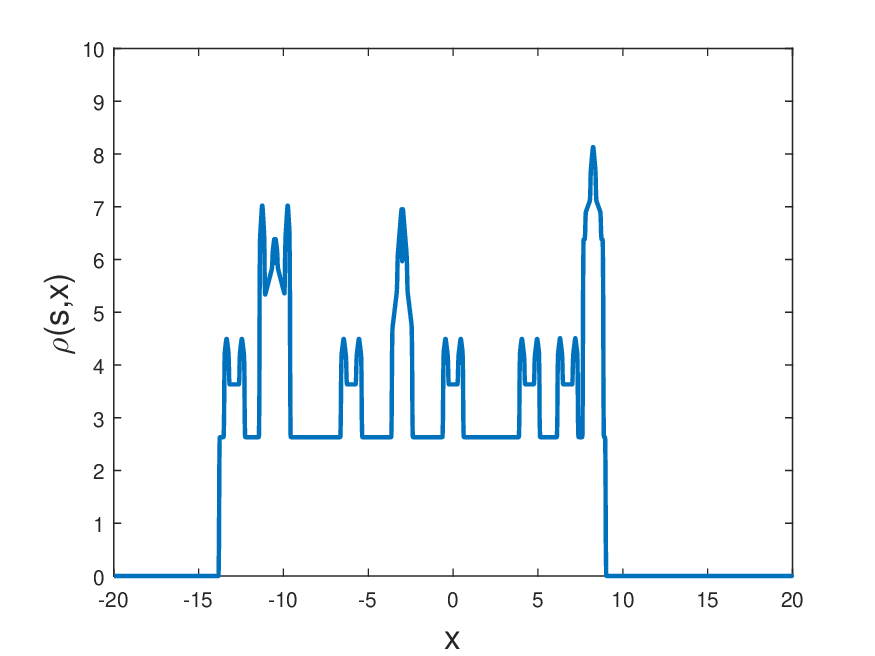}}
\put(72,105){t = 0.000}
\put(0,50){\rotatebox{90}{$\rho(x,t)$}}
\end{picture}
\begin{picture}(155,130)
\put(15,0){\includegraphics[width=0.33\textwidth,trim={1.5cm 1.1cm 1.3cm .7cm},clip]{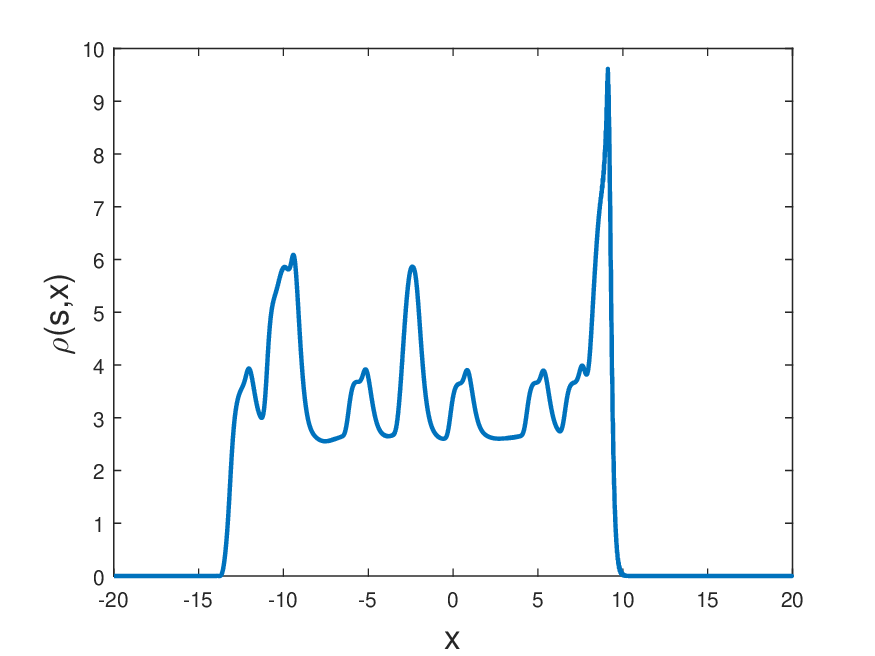}}
\put(72,105){t = 0.125}
\end{picture}
\begin{picture}(150,130)
\put(15,0){\includegraphics[width=0.33\textwidth,trim={1.5cm 1.1cm 1.3cm .7cm},clip]{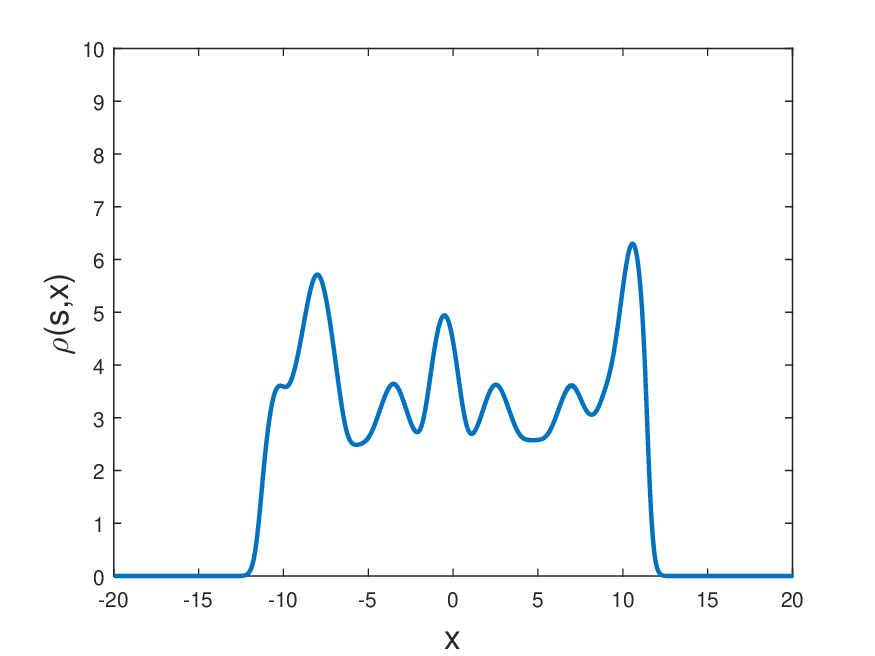}}
\put(72,105){t = 0.050}
\end{picture}
\\
\begin{picture}(155,130)
\put(15,0){\includegraphics[width=0.33\textwidth,trim={1.5cm .8cm 1.3cm .7cm},clip]{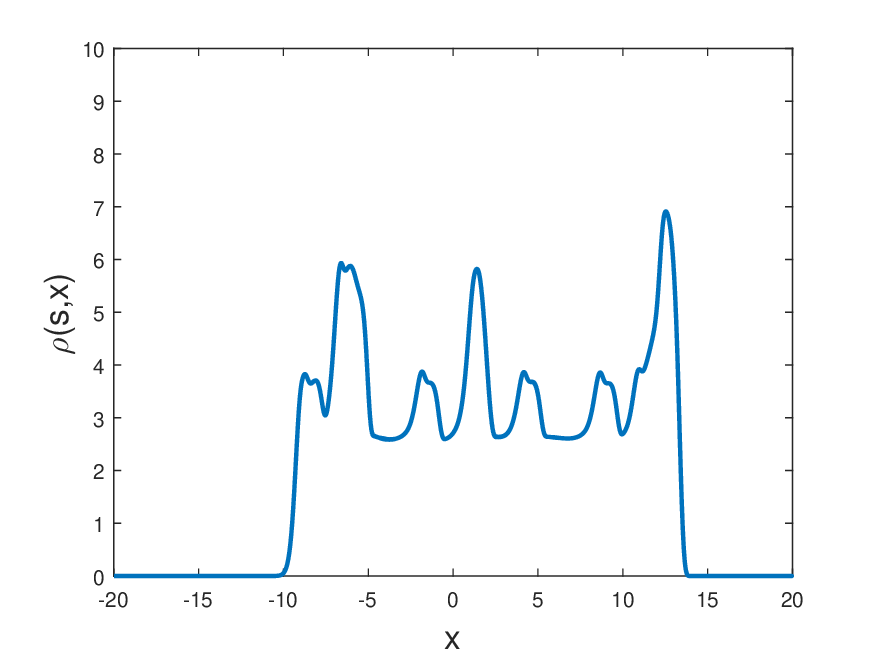}}
\put(72,110){t = 0.875}
\put(0,50){\rotatebox{90}{$\rho(x,t)$}}
\put(92,-10){$x$}
\end{picture}
\begin{picture}(155,130)
\put(15,0){\includegraphics[width=0.33\textwidth,trim={1.5cm .8cm 1.3cm .7cm},clip]{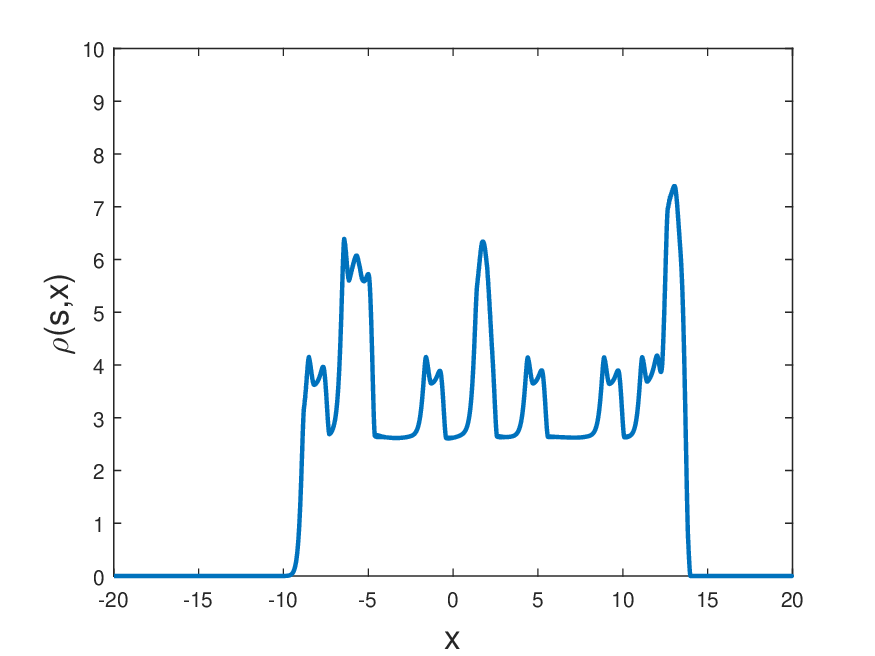}}
\put(72,110){t = 0.095}
\end{picture}
\begin{picture}(150,130)
\put(15,0){\includegraphics[width=0.33\textwidth,trim={1.5cm .8cm 1.3cm .7cm},clip]{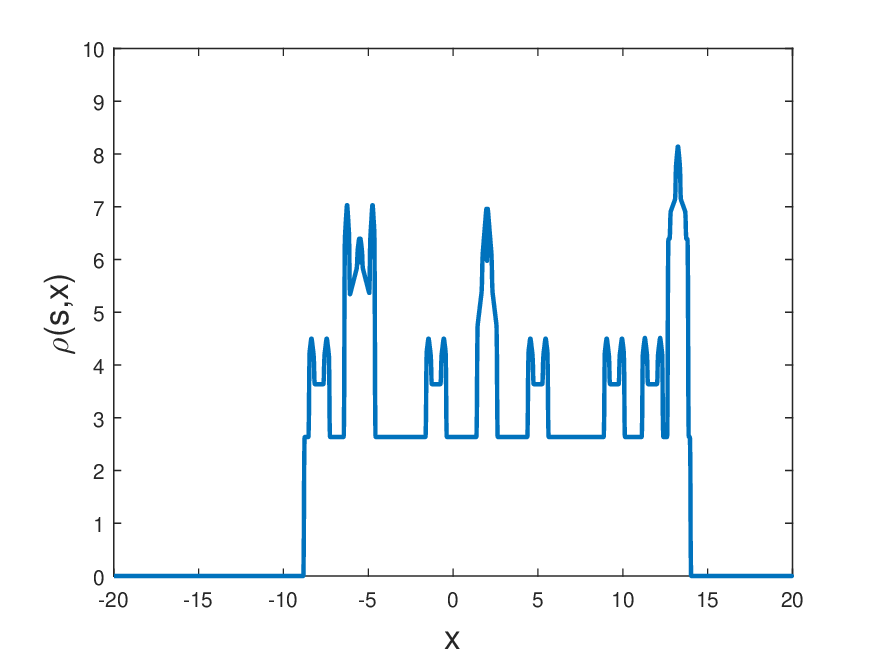}}
\put(72,110){t = 1.000}
\end{picture} \\

\caption{Computation of the Wasserstein geodesic between two translations of British parliament on the domain, with  $N_t = 40$ temporal grid points ($\Delta t = \frac{1}{40}$) and $N_x = 2000$ spatial grids ($\Delta x = \frac{40}{2000}$). Here $\sigma=0.1$, $\sigma\lambda=0.99/\lambda_{max}(AA^t)$ and then $\lambda=0.9727$, $\delta = 10^{-5}$, and $\delta_3 = 10^{-8}$. }\label{fig:palace}
\end{figure}

Next, we compute Wasserstein geodesics between initial and target measures when neither are smooth nor strictly positive. In Figure~\ref{fig:palace}, we compute the geodesic between a profile of the British Parliament and its translation. We do not observe convergence to the exact geodesic, which would be a constant speed translation, and instead observe degradation of the parliamentary building at intermediate times, due to numerical smoothing. Similarly, in Figure \ref{fig:pacman}, we compute the geodesic between Pac-Man and a ghost, visualized as characteristic functions on sets in two dimensions. Again, we observe numerical smoothing around the edges of discontinuity.
Both of these examples offer a  numerical justification for the smoothness assumption we impose in our main convergence Theorem \ref{convergence}.  In the absence of such smoothness, it appears that the method does not converge. Similar smoothness assumptions are required in the other numerical methods for Wasserstein geodesics for which rigorous convergence has been analyzed, including  Monge Amp\'ere type methods \cite{GM96, BFO14}.

\begin{figure}[h!]
\centering
{ \textbf{Evolution of geodesic between Pac-Man and a ghost} } \\
\includegraphics[width=0.2\textwidth,trim={3cm 2cm 2.5cm 1.25cm},clip]{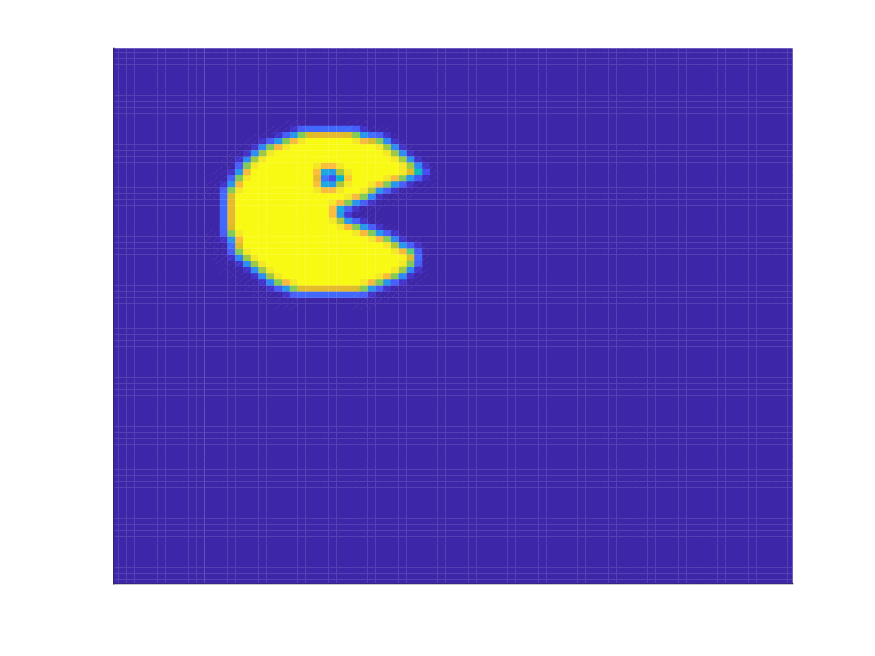}
\includegraphics[width=0.2\textwidth,trim={3cm 2cm 2.5cm 1.25cm},clip]{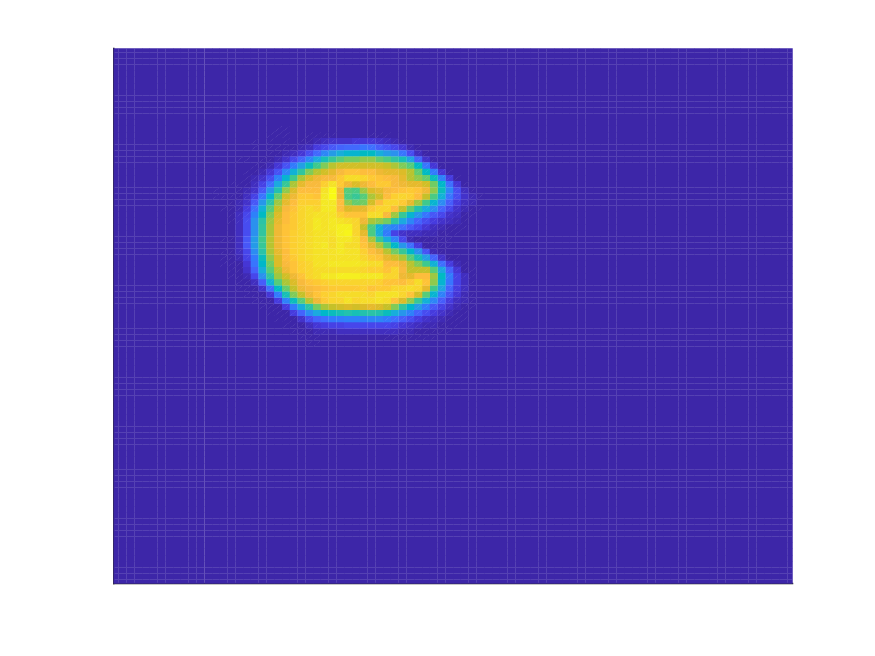}
\includegraphics[width=0.2\textwidth,trim={3cm 2cm 2.5cm 1.25cm},clip]{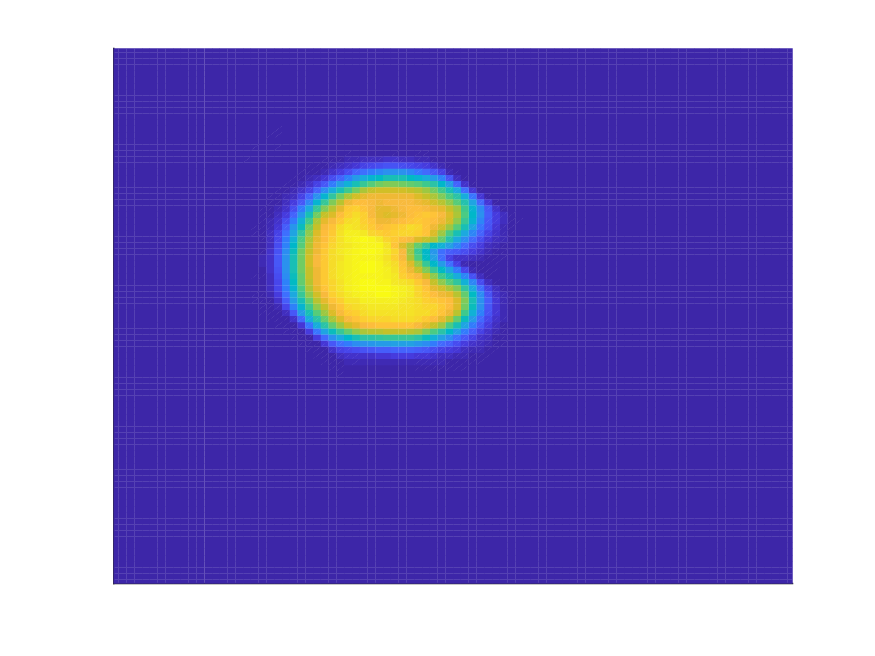}\\
\includegraphics[width=0.2\textwidth,trim={3cm 2cm 2.5cm 1.25cm},clip]{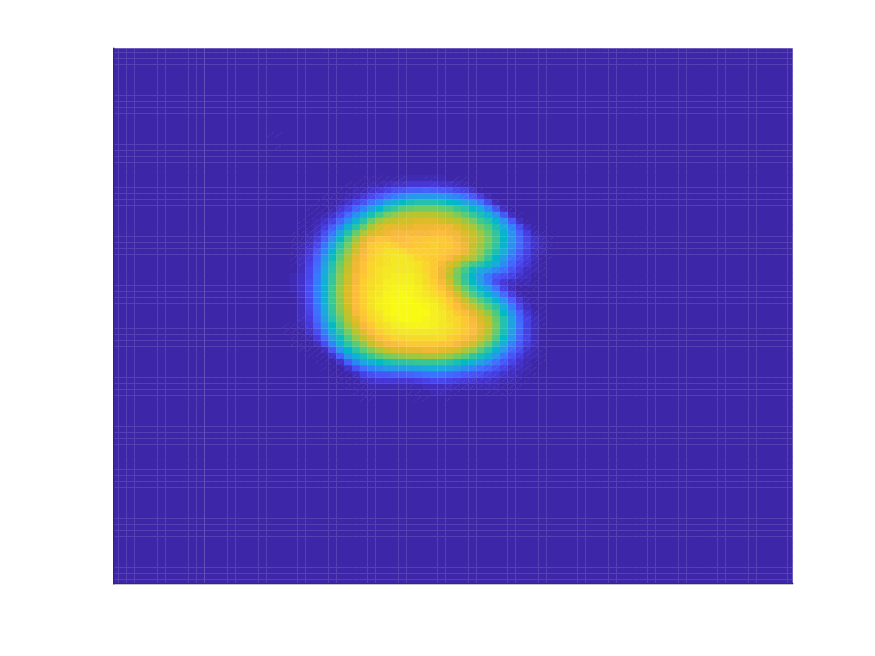}
\includegraphics[width=0.2\textwidth,trim={3cm 2cm 2.5cm 1.25cm},clip]{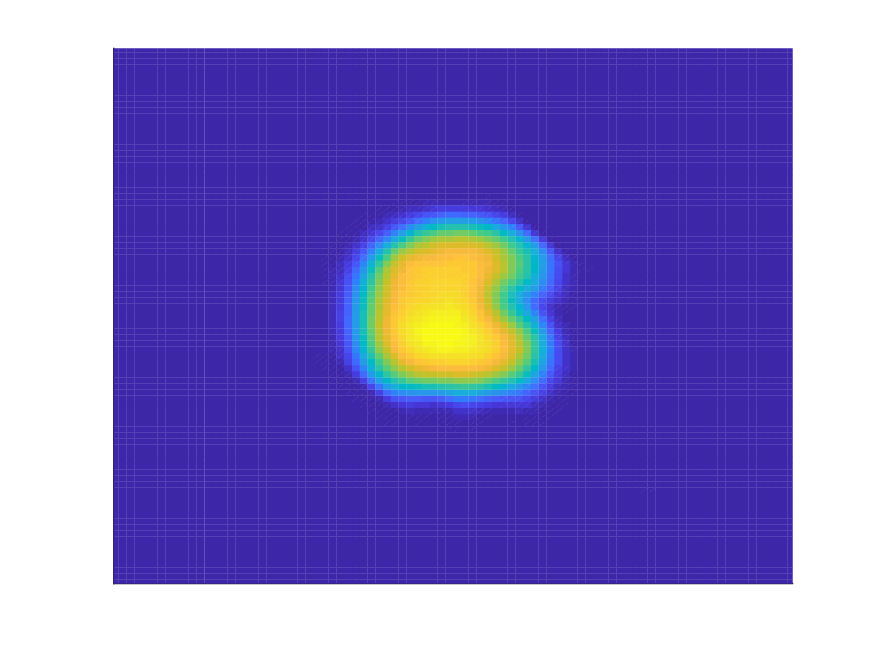}
\includegraphics[width=0.2\textwidth,trim={3cm 2cm 2.5cm 1.25cm},clip]{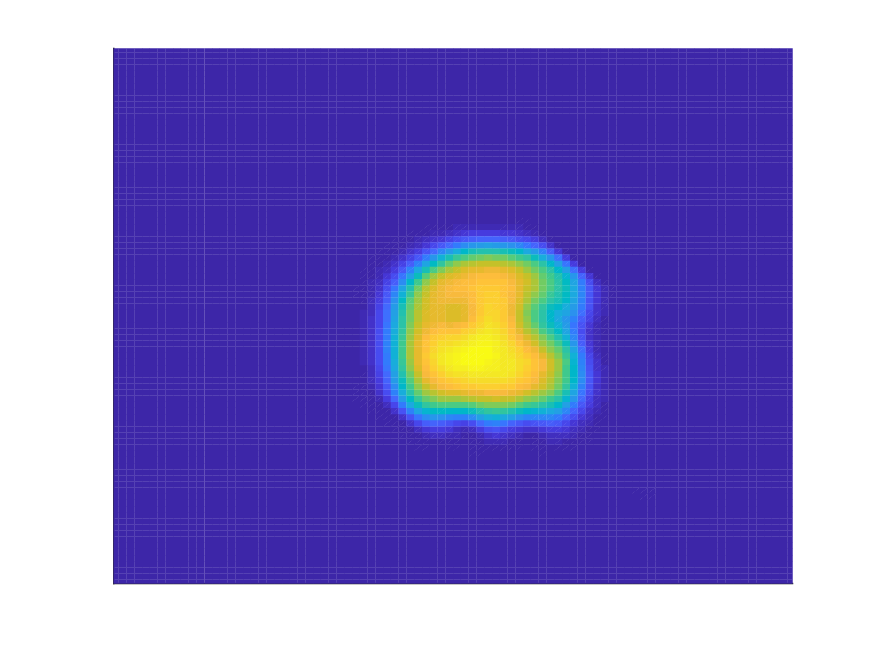}\\
\includegraphics[width=0.2\textwidth,trim={3cm 2cm 2.5cm 1.25cm},clip]{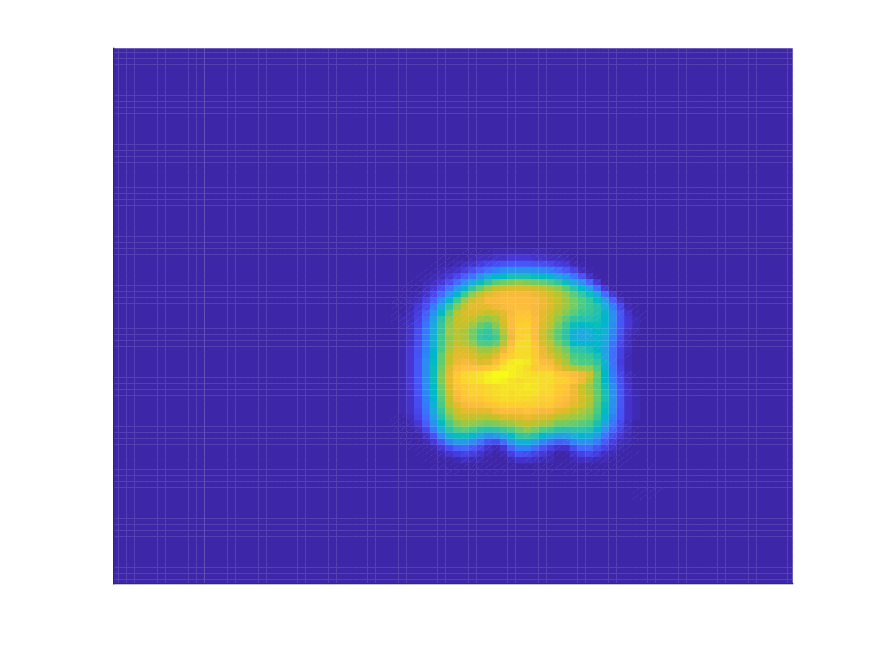}
\includegraphics[width=0.2\textwidth,trim={3cm 2cm 2.5cm 1.25cm},clip]{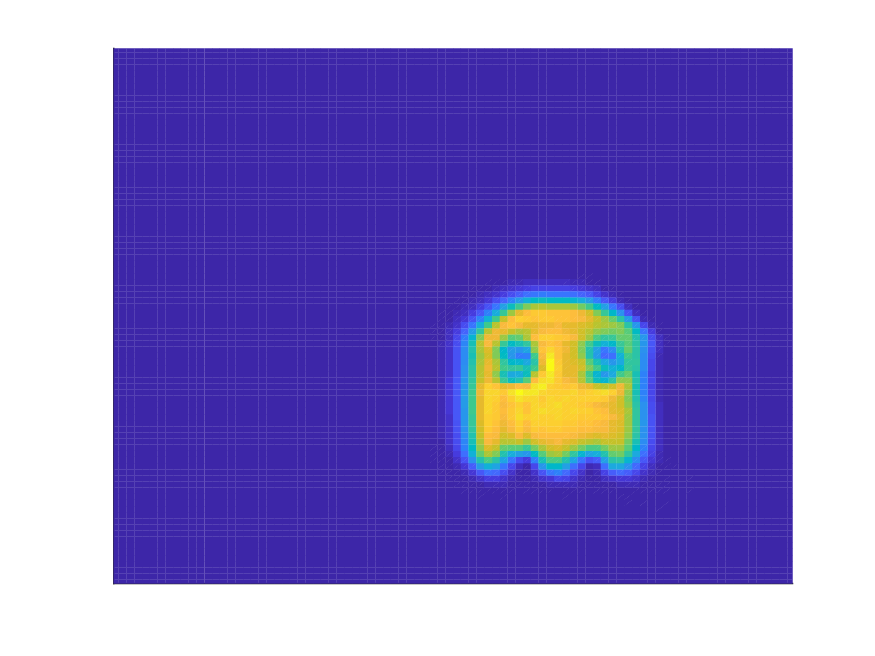}
\includegraphics[width=0.2\textwidth,trim={3cm 2cm 2.5cm 1.25cm},clip]{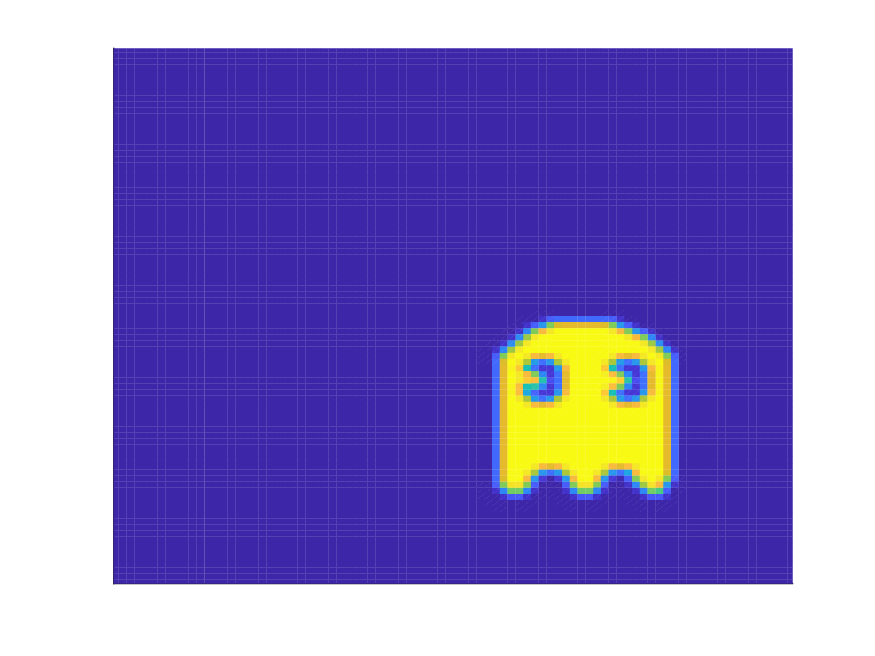}
\caption{Computation of the Wasserstein geodesic between Pac-Man and a ghost, with  $N_t = 40$ temporal points ($\Delta t = \frac{1}{40}$) and $N_x = N_y=120$ spatial grids ($\Delta x = \Delta y = 0.0458$). From left to right, up to down, the plots correspond to $t=0$, $t=0.15$, $t=0.275$, $t = 0.4$, $t = 0.525$, $t = 0.65$, $t = 0.775$, $t = 0.9$, and $t=1$. Here $\lambda=40$, $\sigma\lambda=1.2/\lambda_{max}(AA^t)$ then $\sigma=0.0036$, $\delta = 10^{-5}$, and $\delta_3 = 10^{-5}$.}\label{fig:pacman}
\end{figure}

\subsection{Wasserstein gradient flows: one dimension}
In this and the next section, we consider several examples of Wasserstein gradient flows, including some which have appeared in previous numerical studies \cite{CCH15, CRW16, SCS18,BCH}, to demonstrate the performance of our method for simulating solutions of nonlinear partial differential equations.

\subsubsection{Porous medium equation} \label{sec:PME}
The porous medium equation
\begin{equation} \label{PMEeqn}
\partial_t \rho = \Delta \rho^{m}\,, \quad m>1, 
\end{equation}
is the Wasserstein gradient flow of the energy (\ref{eqn:energy}), with  ${U}(\rho) = \frac{1}{m-1} \rho^m$ and $V = W = 0\,$. A well-known family of exact solutions is given by Barenblatt profiles (c.f. \cite{VazquezPME}), which are densities of the form
\begin{equation} \label{eqn:Barenblatt}
\rho(x,t) = (t+t_0)^{-\frac{1}{m+1}} \left( C - \alpha \frac{m-1}{2m(m+1)}  x^2 (t+t_0)^{-\frac{2}{m+1}} \right)_{+}^{\frac{1}{m-1}} , \qquad \text{ for } C, t_0 > 0 .
\end{equation}

\begin{figure}[h!]
\centering{\textbf{Evolution of solution to porous medium equation}} \\
\centering{\includegraphics[width=0.4\textwidth,trim={.8cm .15cm 1.5cm .8cm},clip]{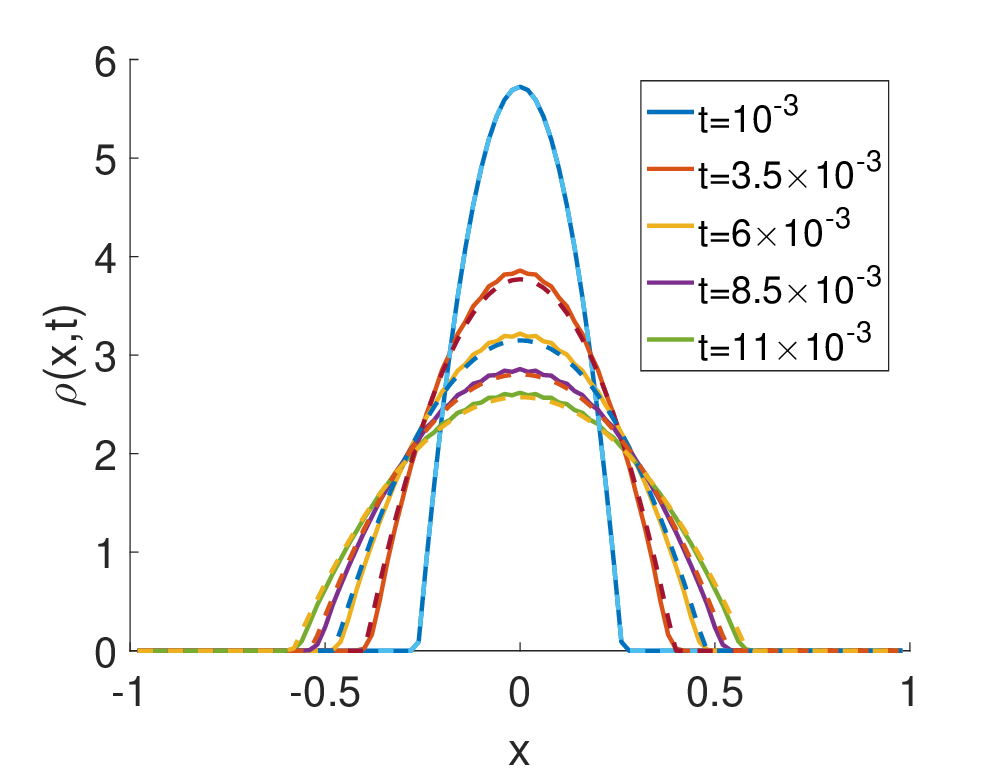}}
\caption{Evolution of the solution $\rho(x,t)$ to the one dimensional porous medium equation, with $m=2$ on the domain $\Omega = [-1,1]$. We choose $\tau = 0.5\times10^{-3}$, $ \Delta x = 0.02$,  $\Dt = 0.1$, $\lambda=0.2$, $\sigma\lambda=1.1/\lambda_{max}(AA^t)$ then $\sigma=0.1954$, $\delta = 10^{-5}$, and $\delta_3 = 10^{-5}$.}\label{fig:PM1D}
\end{figure}
We now apply algorithm \ref{alg:nonlinearJKO} to simulate solutions of the $m=2$  porous medium equation with  Barenblatt initial data, $t_0 = 10^{-3}$  and $C = (3/16)^{1/3}$. Here the Euler discretization (\ref{forwardEuler}) is used. In Figure~\ref{fig:PM1D}, we plot the evolution of the numerical solution over time, and we observe good agreement with the exact solution of the form (\ref{eqn:Barenblatt}), which is displayed in dashed curve.

In Figure~\ref{fig:convg}, we analyze how the rate of convergence depends on the   inner time step $\Delta t$, the spacial discretization $\Delta x$, and outer time step of the JKO scheme $\tau$. We compute the error between the exact solution and the numerical solution in the $\ell^1$ norm, i.e.
\[ \|\rho- \rho^* \|_{\ell^1} = \frac{1}{N_x(n+1)}\sum_{k=0}^{N_t}\sum_{j = 1}^{N_x-1} |\rho_{j}(k\tau) - \rho^*_{j}(k\tau)| . \]
In the plot on the left of Figure~\ref{fig:convg}, we consider two fixed values of $\tau$ and examine how the error depends on $N_t$ and $N_x  = 10 N_t$.  In both cases, the error quickly saturates, indicating that the outer time step $\tau$ dominates the error. In the plot on the right, we fix $N_t = 20$ and $N_x = 200$ and consider how the error depends on $\tau$. We observe slightly less than first order convergence in $\tau$ for the classical JKO scheme ($\energy^h = \F^h$) and higher order convergence for the Crank-Nicolson inspired scheme ($\energy^h = \H^h$). We believe these slower rates of convergence are due to the lower regularity of solutions to the porous medium equation with compactly supported initial data, which are merely H\"older continuous.

\begin{figure}[h!]
\centering
\textbf{Rate of convergence to porous medium equation} \\
\includegraphics[width=0.49\textwidth, trim={.1cm .05cm 1.2cm .6cm},clip]{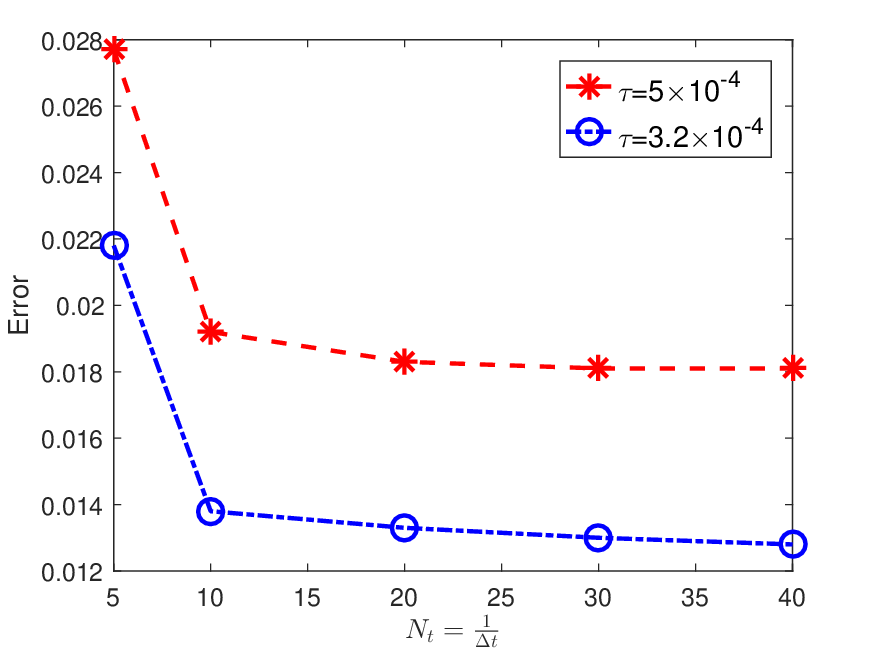}
\includegraphics[width=0.49\textwidth, trim={.1cm .05cm 1.2cm .6cm},clip]{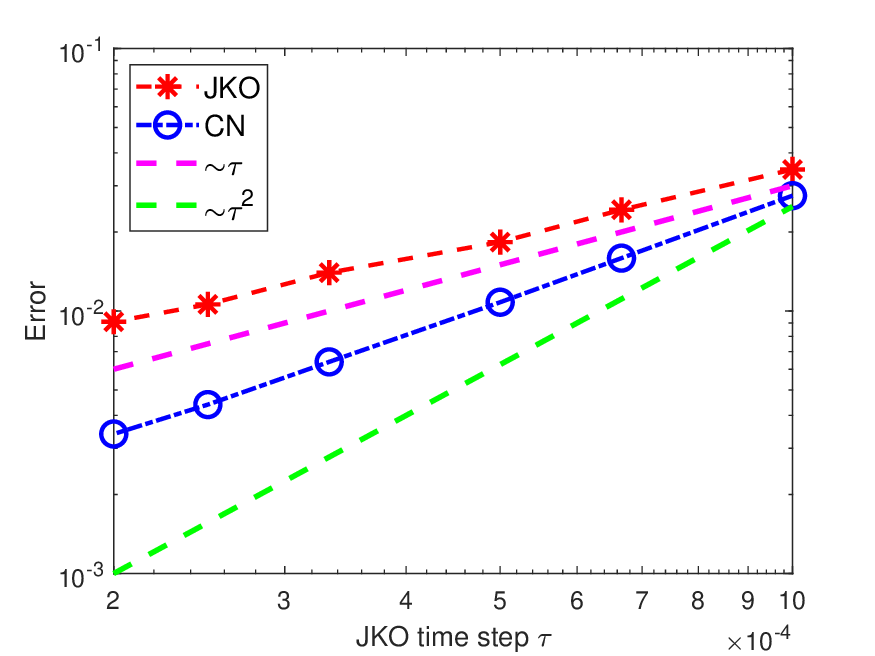}
\caption{Analysis of rate of convergence for  a solution of porous medium equation, as in Figure \ref{fig:PM1D}. Left: Convergence  to exact solution for  $N_x/N_t=10$ for choices of $\tau$. Right: Convergence to exact solution for $N_t=20$ and $N_x=200$ and various choices of $\tau$, contrasting the traditional first-order JKO scheme with the new Crank-Nicolson inspired scheme. }
\label{fig:convg}
\end{figure}

In  Figure~\ref{fig:convgsmooth}, we consider the case of smooth, strictly positive initial data, given by a Gaussian with mean $\mu =0$ and variance $\theta = 0.2$ (\ref{gaussian}),  in which case solutions of the PDE remain smooth over time.   On the left, we show the evolution of the solutions over time, and on the right, we illustrate that the classical JKO scheme indeed attains first order accuracy, though the Crank-Nicolson inspired scheme is still less than second order accurate.

\begin{figure}[h!]
	\centering
	\textbf{Porous medium equation with smooth positive initial density} \\
	\includegraphics[width=0.49\textwidth, trim={.1cm .05cm 1.2cm .6cm},clip]{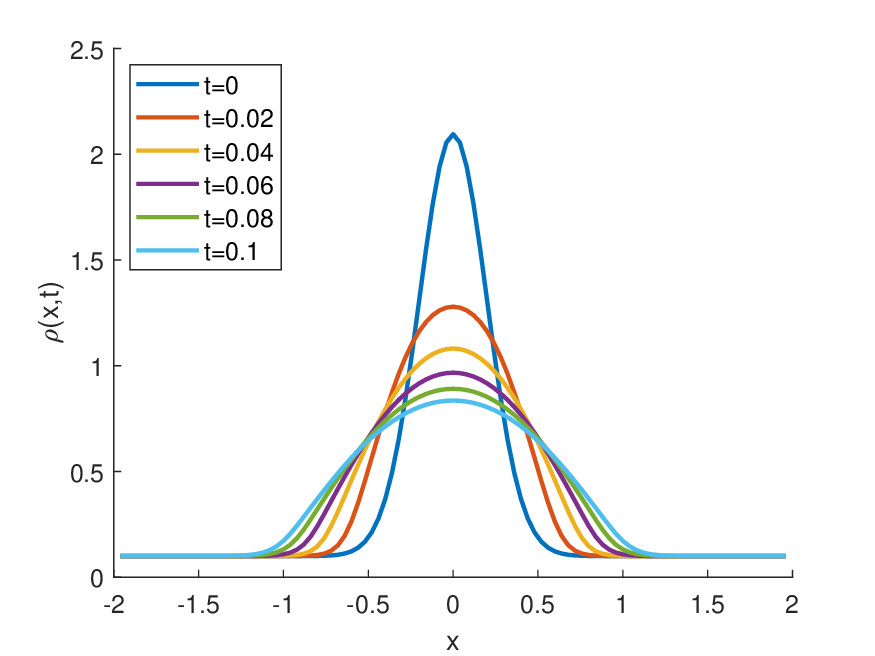}
	\includegraphics[width=0.49\textwidth, trim={.1cm .05cm 1cm .6cm},clip]{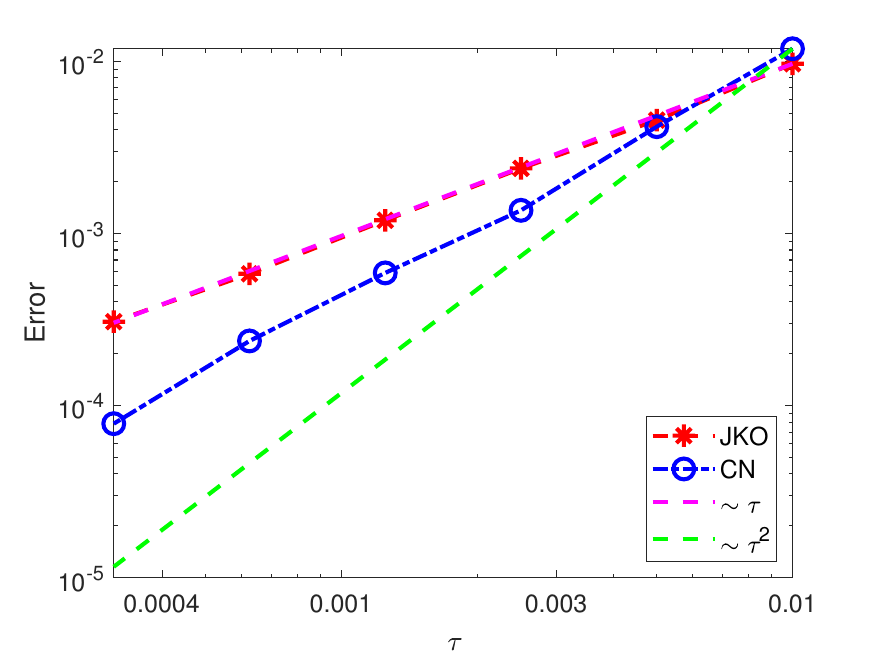}
	\caption{Evolution and the rate of convergence for  a solution of porous medium equation with smooth positive initial density. We choose $N_t=10$, $N_x=100$, $\sigma = 10$, $\lambda = 0.0148$. Left: Evolution of the solution $\rho(x,t)$ to the one dimensional porous medium equation, with $m=2$ on the domain $\Omega = [-2,2]$ for $\tau = 0.005$. Right: The rate of convergence for  various choices of $\tau$, contrasting the traditional first-order JKO scheme with the new Crank-Nicolson inspired scheme. For each choice of $\tau$ in our computation of the higher-order method, we choose our stopping criteria $\epsilon = 10^{-4} * 2^{-0.01/\tau}$.}
	\label{fig:convgsmooth}
\end{figure}

\subsubsection{Nonlinear Fokker-Planck equation} \label{sec:FP}
We now consider a nonlinear variant of the Fokker-Planck equation,
\begin{equation*} %\label{nonFPeqn}
\partial_t \rho = \grad\cdot (\rho \grad V) + \Delta \rho^{m}\,, \quad V: \Rd \to \R, \quad m>1, 
\end{equation*}
inspired by the porous medium equation described in the previous section (\ref{PMEeqn}). When $V$ is a confining drift potential, all solutions approach the unique steady state
\[
\rho_\infty(x) =  \left( C -  \frac{m-1}{m}{V(x)} \right)_{+}^{\frac{1}{m-1}} \,,
\]
where $C>0$ depends on the mass of the initial data, so that $\int \rho_\infty \rd x = \int \rho_0 \rd x $, see \cite{CaTo00,CJMTU}.

\begin{figure}[h!]
\centering
\textbf{Evolution of solution to nonlinear Fokker-Planck equation}
\includegraphics[width=0.49\textwidth,trim={.6cm .05cm 1.3cm .6cm},clip]{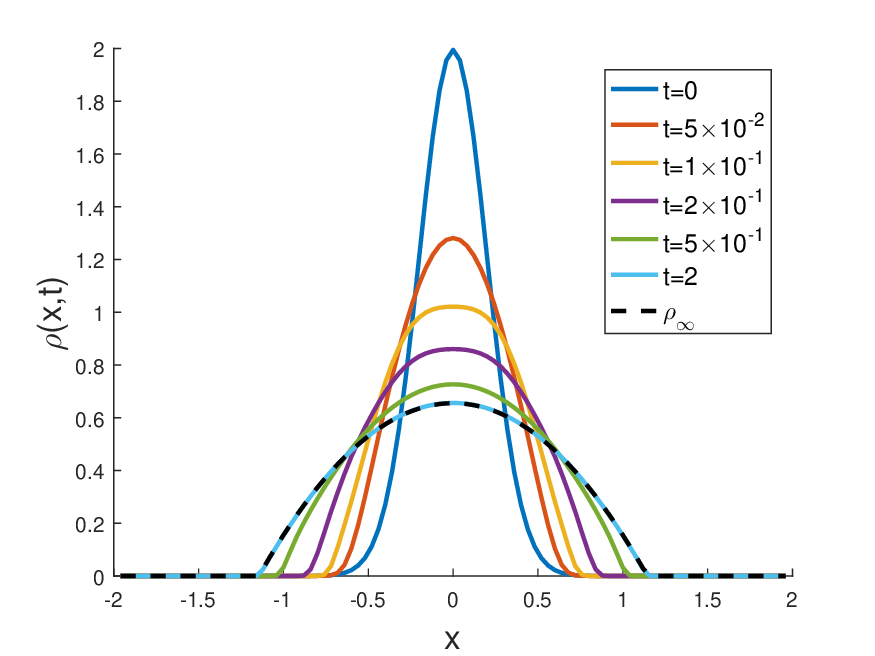}
\includegraphics[width=0.49\textwidth,trim={.6cm .05cm 1.3cm .6cm},clip]{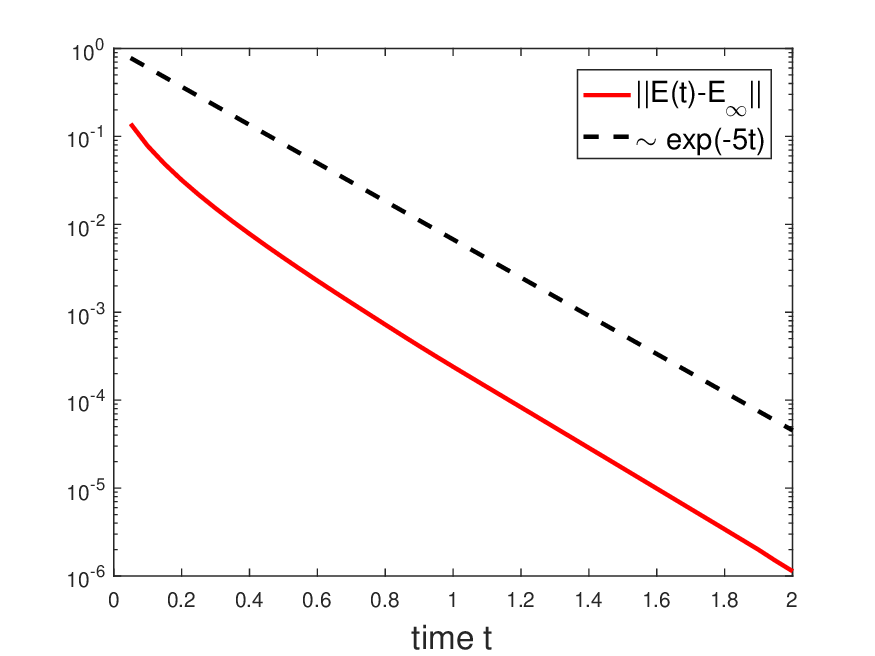}
\caption{Evolution of the solution $\rho(x,t)$ to the one dimensional nonlinear Fokker-Planck equation, with $m=2$ and $V(x) = x^2$. We choose $\tau = 0.05$, $ \Delta x = 0.04$,  $\Dt = 0.1$, $\lambda=0.1641$, $\sigma=1$, $\delta = 10^{-5}$, and $\delta_3 = 10^{-5}$. Left: evolution of density $\rho(x,t)$ towards equilibrium $\rho_\infty(x)$. Right: Rate of decay of corresponding energy with respect to time.}  \label{fig:FK1D}
\end{figure}
In Figure \ref{fig:FK1D}, we simulate the evolution of solutions to the nonlinear Fokker-Planck equation with $V(x) =  x^2 $, $m=2$, and initial data given by a Gaussian with  mean $\mu =0$ and variance $\theta = 0.2$ (\ref{gaussian}). On the left, we plot the evolution of the density $\rho(x,t)$ towards the steady state $\rho_\infty(x)$. On the right, we compute the rate of decay of the corresponding energy (\ref{eqn:energy}) as a function of time, observing exponential decay as the solution approaches equilibrium. In this way, our method recovers analytic results on convergence to equilibrium of Carrillo, DiFrancesco, and Toscani \cite{CaTo00, CDT07}.

\begin{figure}[h!]
\centering{\textbf{Rate of convergence to nonlinear Fokker-Planck equation}} \\
\centering{\includegraphics[width=0.49\textwidth,trim={.4cm .1cm 1.0cm .6cm},clip]{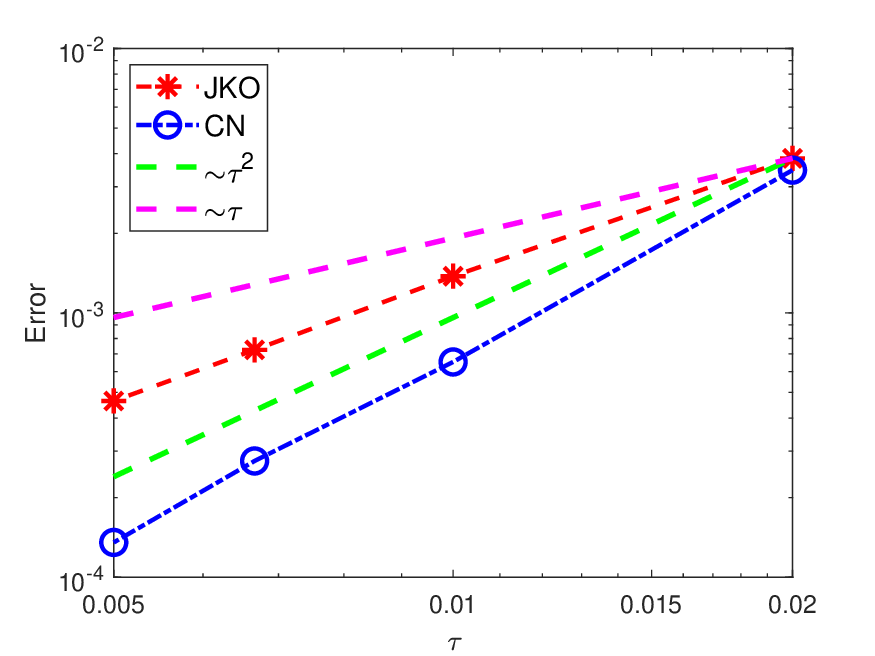}}
\caption{Analysis of rate of convergence for  a solution of the nonlinear Fokker-Planck equation, as in Figure \ref{fig:FK1D}. We choose $\Delta t = 0.1$, $\Delta x = 0.04$ and consider the error \eqref{eqn:error00} for various choices of  $\tau$, contrasting the traditional first-order JKO scheme with the new Crank-Nicolson inspired scheme.} \label{fig:FK1D-conv}
\end{figure}

In Figure~\ref{fig:FK1D-conv}, we analyze how the rate of convergence depends on the outer time step $\tau$ of the   scheme, for sufficiently small inner time step $\Delta t = 0.1$ and spacial discretization $\Delta x = 0.04$.
We compute the error
\begin{equation} \label{eqn:error00}
e_{\tau} = \| \rho_{\tau} (x,t) - \rho_{\tau/2} (x,t)   \|_{\ell^1}
\end{equation}
We observe slightly faster than first order convergence for the traditional JKO scheme ($\energy^h = \F^h$), and higher order convergence for the new Crank-Nicolson inspired scheme  ($\energy^h = \H^h$). We believe this improvement in the rate of convergence as compared to our previous example for the porous medium equation, Figure \ref{fig:convg}, is due to the rapid convergence to the steady state $\rho_\infty$.

\subsubsection{Aggregation equation} \label{aggeqn1d}
In this section, we consider a nonlocal partial differential equation of Wasserstein gradient flow-type, known as the aggregation equation
\begin{align} \label{aggeqn1}
\partial_t \rho = \grad\cdot (\rho \grad W*\rho)  \,, \quad W: \Rd \to \R\,.
\end{align}
In recent years, there has been significant interest in interaction kernels $W$ that are repulsive at short length scales and attractive at longer distances, such as the kernel with logarithmic repulsion and quadratic attraction
\begin{equation} \label{Weqn1}
W(x) = \frac{|x|^2}{2} - \text{ln}(|x|)\,.
\end{equation}
For this particular choice of $W$, there exists a unique equilibrium profile \cite{CFP12}, given by
\[\rho_\infty (x) =   \frac{1}{\pi} \sqrt{(2-x^2)_+}    .\]

\begin{figure}[h!]
\centering
\textbf{Evolution of solution to aggregation equation } \\
\includegraphics[width=0.49\textwidth,trim={.4cm .1cm 1.3cm .4cm},clip]{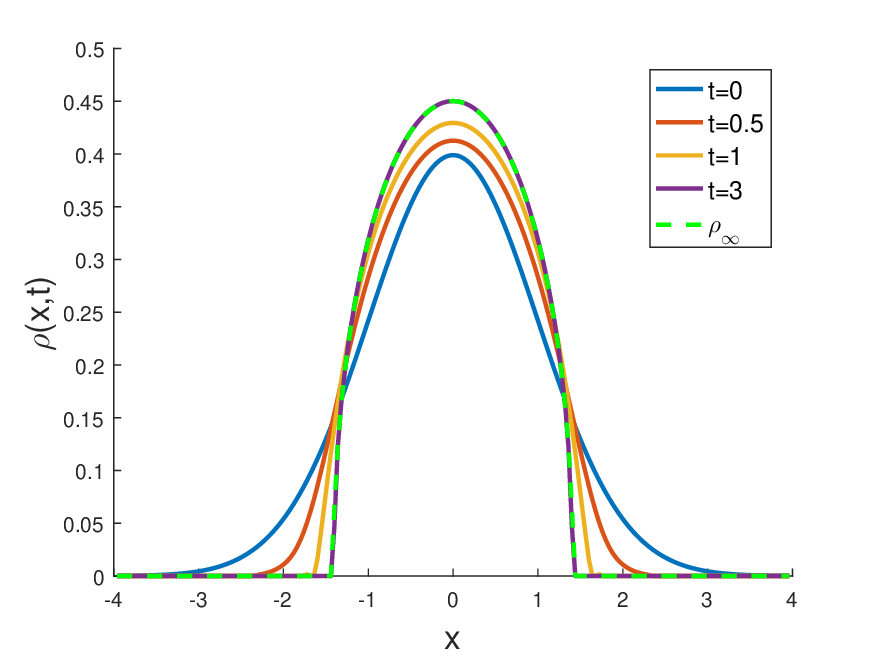}
 \includegraphics[width=0.49\textwidth,trim={.4cm .1cm 1.3cm .4cm},clip]{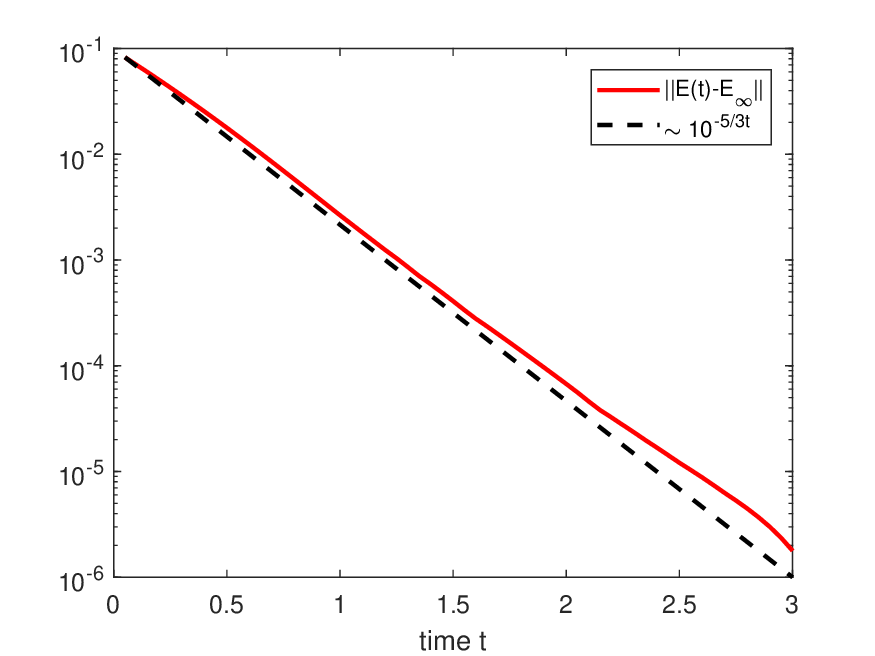}
\caption{Evolution of the solution $\rho(x,t)$ to the one dimensional aggregation equation, with $W(x) = x^2/2 - \ln(|x|)$, $x\in [-4,4]$. We choose $\tau = 0.05$, $ \Delta x = 0.04$,  $\Dt = 0.05$, $\lambda=0.01$, $\sigma\lambda=0.99/\lambda_{max}(AA^t)$ then $\sigma=18.8$, $\delta = 10^{-6}$, and $\delta_3 = 10^{-6}$. Left: evolution of density $\rho(x,t)$ towards equilibrium $\rho_\infty(x)$. Right: Rate of decay of corresponding energy with respect to time. }  \label{fig:Agg1D}
%\lw{the exact decay rate should be $e^{-2t}$} }
\end{figure}
In Figure~\ref{fig:Agg1D}, we simulate the solution to the aggregation equation with Gaussian initial data (\ref{gaussian}) with mean $\mu =0$ and variance $\theta =1$, analyzing  convergence to equilibrium. On the left, we plot the evolution of the density $\rho(x,t)$ at varying times, observing convergence to the equilibrium profile $\rho_\infty(x)$. On the right, we compute the rate of the decay of the energy as a function of time, observing exponential decay as obtained by Carrillo, Ferreira, and Precioso \cite{CFP12} with a slightly slower numerical rate.

As the interaction potential $W$ defined in equation (\ref{Weqn1}) is not continuous, we make the following modifications to our discretization of the JKO scheme. To avoid evaluation of $W(x)$ at $x=0$, we set $W(0)$ to equal the average value of $W$ on the cell of width $2h$ centered at 0, i.e., $W(0) = \frac{1}{2h} \int_{-h}^{h} W(x) \rd x$, where we apply Gauss-Legendre quadrature rule with four grid points to evaluate the integral. In addition to modifying the interaction kernel in this way, we also introduce   an artificial diffusion term of the form $\epsilon \partial_x( \rho \partial_x \rho)$ with $\epsilon=1.6\times(\Delta x)^2$ to the right hand side of (\ref{aggeqn1}), to avoid the possible overshoot at the boundary. (See also \cite{CCH15} for a similar treatment.)

\subsection{Wasserstein gradient flows: two dimensions}
In the following, we consider a few gradient flows in two dimensions. Here the constraint relaxation parameters are always chosen as $\delta = \delta _3 = 10^{-6}$.

\subsubsection{Aggregation equation}
We now continue our study of the aggregation equation (\ref{aggeqn1}) with repulsive-attractive interaction potentials in two dimensions, with interaction kernels of the form
\begin{equation} \label{attrep1}
W(x) = \frac{|x|^a}{a} - \frac{|x|^b}{b}, \quad a > b \geq 0\, ,
\end{equation}
using the convention that $\frac{|x|^0}{0} = \text{ln}(|x|)$. It is well-known that the repulsion near the origin of the potential determines the dimension of the support of the steady state measure, see \cite{BCLR13-2,CDM}. In the following simulations, we take the initial data to be a gaussian (\ref{gaussian}) with mean $\mu =0$ and variance  $\theta=0.25$.

\begin{figure}[h!]
\centering
\textbf{Aggregation equation, smooth interaction potential} \\ \hspace{-.4cm}
\includegraphics[width=0.33\textwidth, trim={.9cm .5cm 1.3cm 1.2cm},clip]{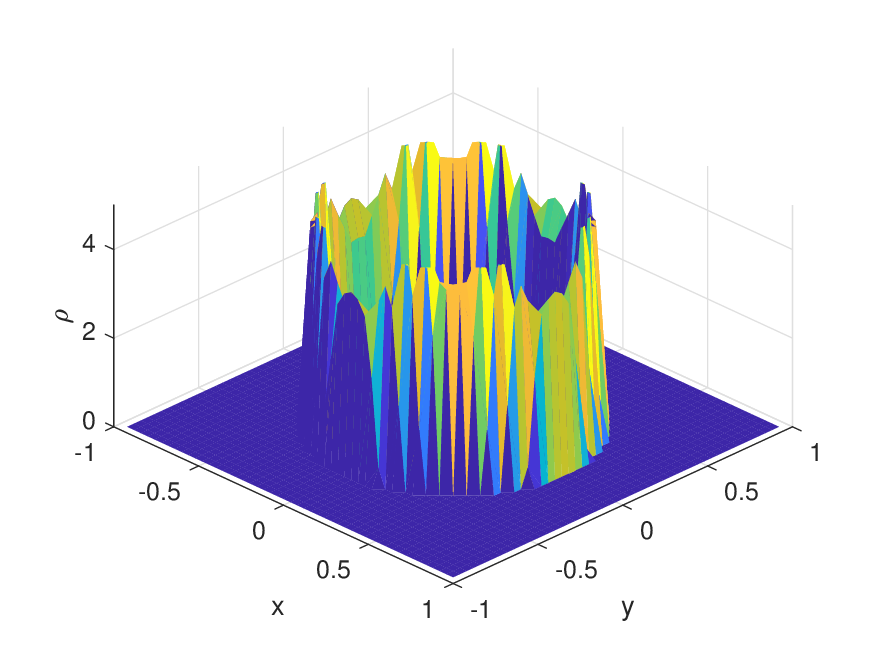}
\includegraphics[width=0.33\textwidth,trim={.5cm .05cm 1.3cm .4cm},clip]{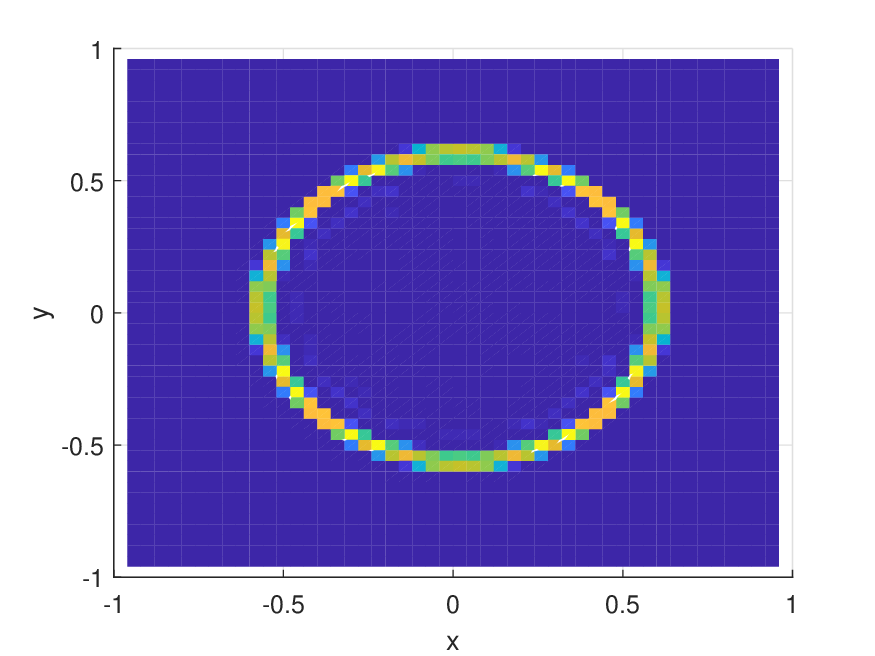}
\includegraphics[width=0.33\textwidth,trim={.1cm .1cm 1.2cm .4cm},clip]{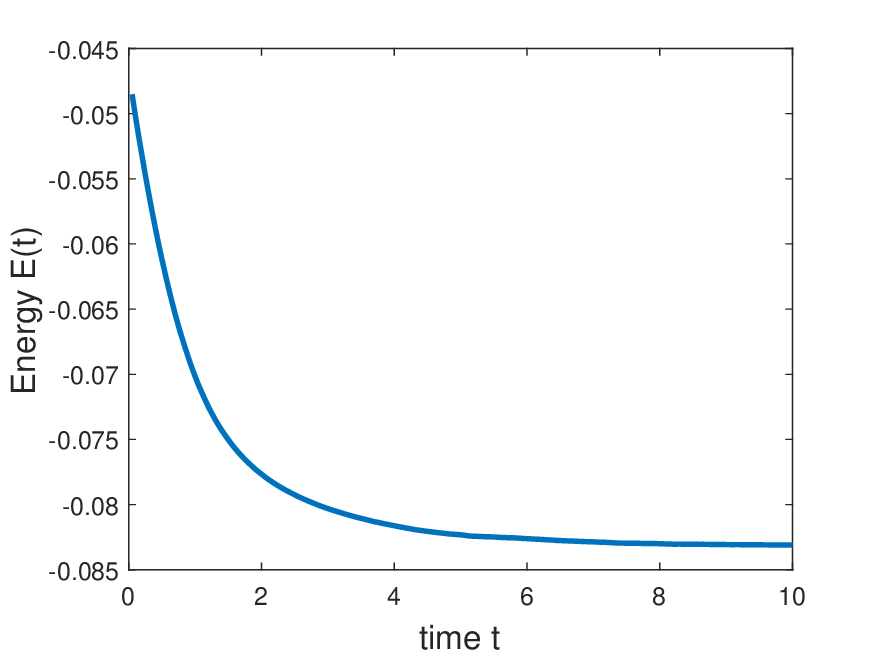}
\caption{We compute the steady state of a solution to the two dimensional aggregation equation with interaction potential $W(x) = |x|^4/4 - |x|^2/2$, which is a Dirac ring of radius 0.5, centered at the origin. The computational domain is [-1,1]$\times$[-1,1]. We choose $\tau = 0.05$, $ \Delta x = \Delta y = 0.04$,  $\Dt = 0.1$, $\lambda = 20$, $\sigma = 0.0052$, and $\epsilon_1=\epsilon_2=10^{-6}$. The steady state shown is the solution at time $t=10$. Left: side view of equilibrium. Center: bird's eye view of equilibrium. Right: rate of decay of energy as solution approaches equilibrium.} \label{AD}
\end{figure}
In Figure \ref{AD}, we simulate the evolution of solutions to the aggregation equation, with   $a=4$ and $b=2$ in the interaction potential $W$, defined in equation (\ref{attrep1}). We observe that the solution concentrates on a Dirac ring with radius $0.5$ centered at the origin, recovering analytical results on the existence of a stable Dirac ring equilibrium for these values of $a$ and $b$ \cite{BCLR13, BKSUV15}.

\begin{figure}[h!]
\centering
\textbf{Aggregation equation, singular interaction potential} \\ \hspace{-.2cm}
\includegraphics[width=0.33\textwidth, trim={.9cm .5cm 1.3cm 1.2cm},clip]{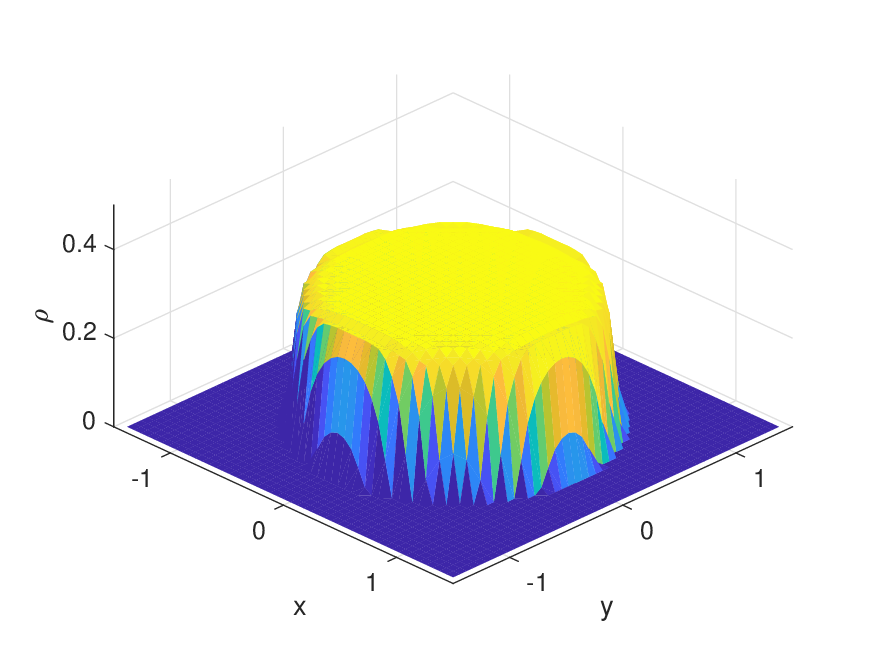}
\includegraphics[width=0.33\textwidth, trim={.5cm .05cm 1.3cm .4cm},clip]{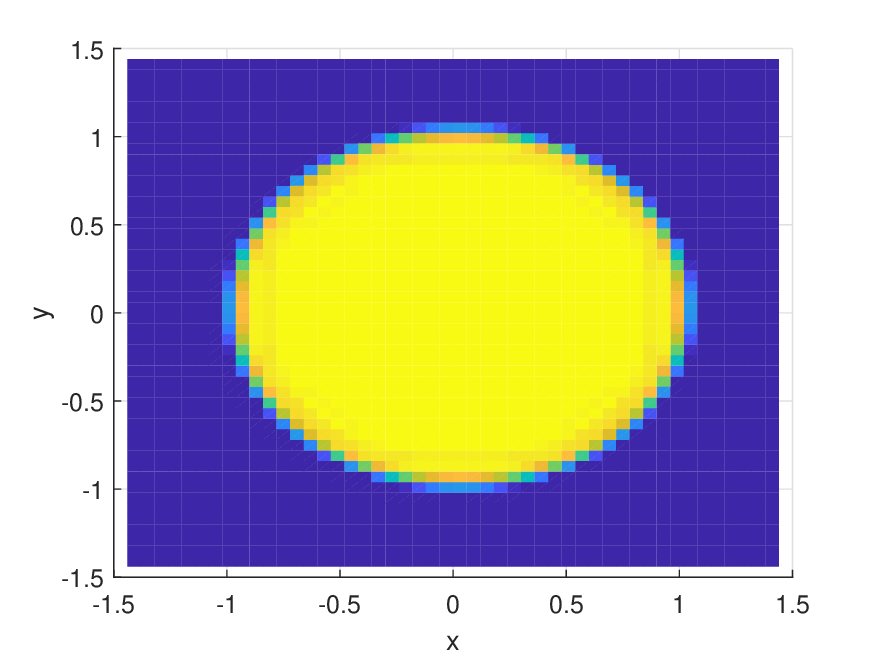}
\includegraphics[width=0.33\textwidth, trim={.1cm .1cm 1.2cm .4cm},clip]{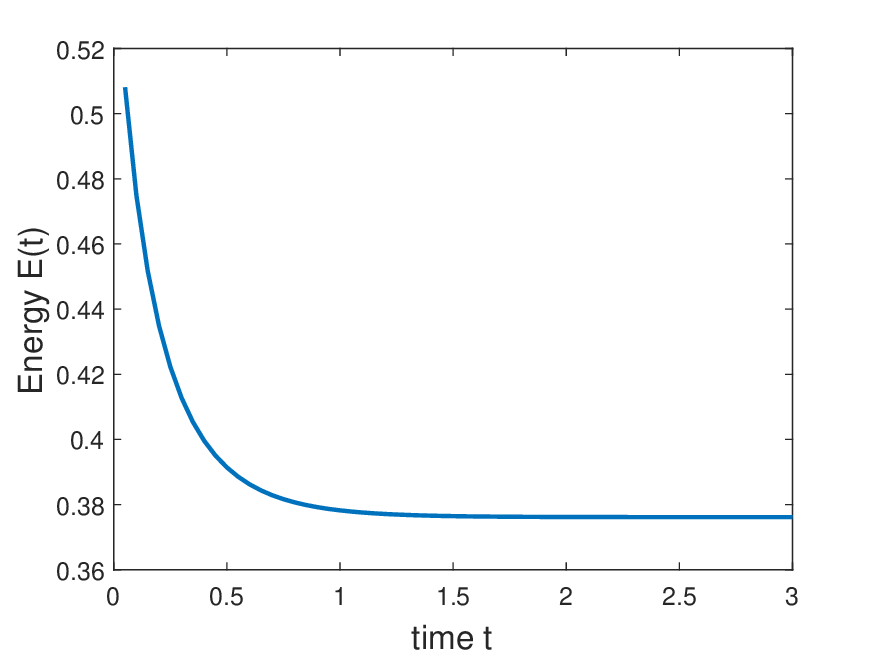}
\caption{
We compute steady state of a solution to the two dimensional aggregation equation with interaction potential $W(x) = |x|^2/2 - \ln(|x|)$, which is the characteristic function on a disk of radius 1, centered at the origin. The computational domain is [-1.5,1.5]$\times$[-1.5,1.5]. We choose $\tau = 0.05$, $ \Delta x = \Delta y = 0.06$,  $\Dt = 0.05$, $\lambda = 50$, and $\sigma = 0.0037$. The steady state is plotted at time t=3. Left: side view of equilibrium. Center: bird's eye view of equilibrium. Right: rate of decay of energy as solution approaches equilibrium.} \label{AD2}
\end{figure}
In Figure \ref{AD2}, we simulate the evolution of solutions to the aggregation equation, with $a = 2$ and $b = 0$. We observe that the solution converges to a characteristic function on the disk of radius 1, centered at the origin, recovering analytic results on solutions of the aggregation equation with Newtonian repulsion \cite{FHK11, BLL12, CDM16}. We follow the same strategy described in section \ref{aggeqn1d} with $\epsilon=1.6\times(\Delta  x^2+\Delta y^2)$ to overcome the singularity of the interaction potential at $x=0$ and potential overshooting.

\subsubsection{Aggregation drift equation}
Next, we compute solutions of aggregation-drift equations
\begin{align*} 
\partial_t \rho = \grad\cdot (\rho \grad W*\rho) + \grad \cdot (\rho \grad V) ,
\end{align*}
where  $W(x) = |x|^2/2 - \ln(|x|)$ and $V(x) = - \frac{\alpha}{\beta} \ln (|x|)$. As shown in several analytical and numerical results \cite{CK14, CHM14, CCH15}, the steady state is a characteristic function on a torus or ``milling profile'', with inner and outer radius given by
\[
R_i = \sqrt{\frac{\alpha}{\beta}},  \quad R_o = \sqrt{\frac{\alpha}{\beta} + 1}\,.
\]

\begin{figure}[h!]
\centering
\textbf{Equilibrium of aggregation-drift equation} \\ \hspace{-.2cm}
\includegraphics[width=0.33\textwidth, trim={.9cm .5cm 1.3cm 1.2cm},clip]{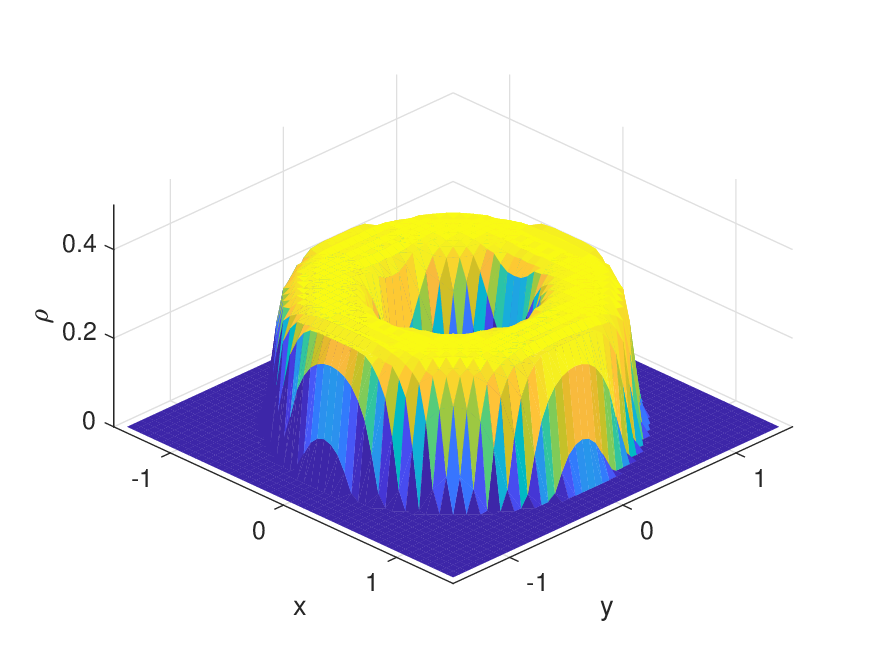}
\includegraphics[width=0.33\textwidth, trim={.5cm .05cm 1.3cm .4cm},clip]{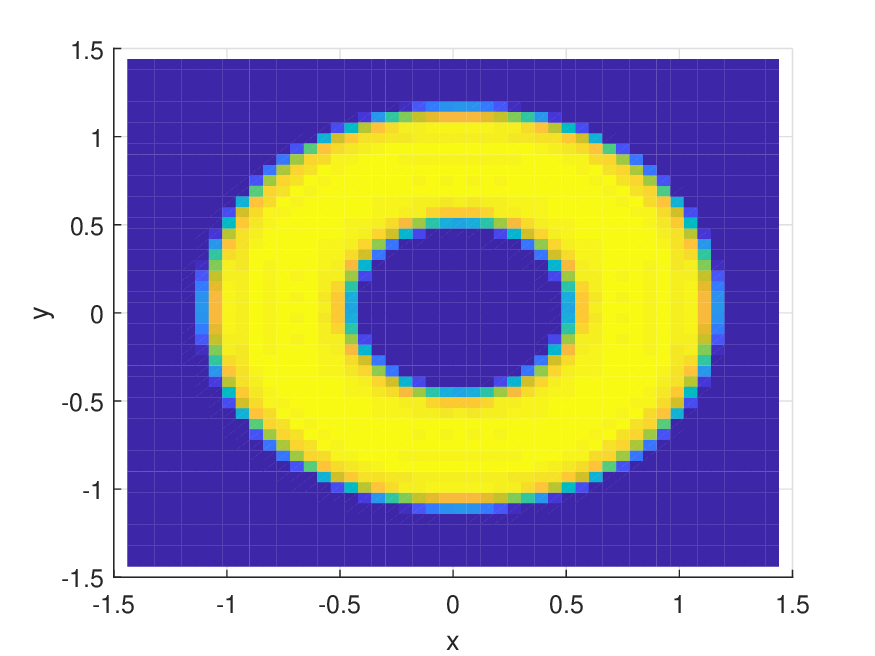}
\includegraphics[width=0.33\textwidth, trim={.1cm .1cm 1.2cm .4cm},clip]{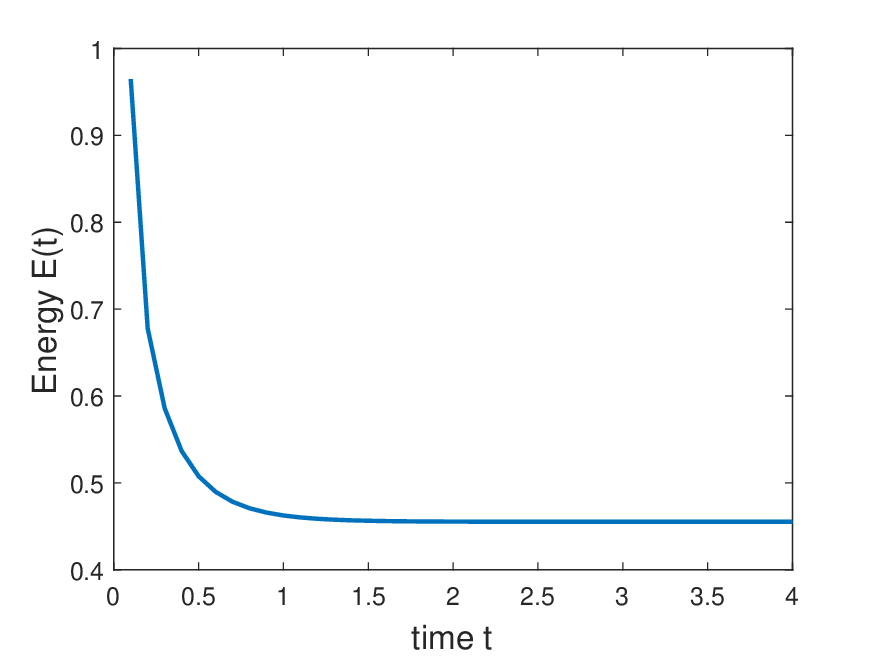}
\caption{
We compute steady state of a solution to the two dimensional aggregation-drift equation with interaction potential $W(x) = |x|^2/2 - \ln(|x|)$ and drift potential $V(x) = -(1/4) \ln(|x|)$, which is the characteristic function on a torus, centered at the origin. The computational domain is [-1.5,1.5]$\times$[-1.5,1.5]. We choose $\tau = 0.1$, $ \Delta x = \Delta y = 0.06$,  $\Dt = 0.05$, $\lambda = 40$, and $\sigma = 0.0046$. The steady state is the solution at time t=4. Left: side view of equilibrium. Center: bird's eye view of equilibrium. Right: rate of decay of energy as solution approaches equilibrium. } \label{AD3}
\end{figure}
In Figure \ref{AD3}, we simulate the long time behavior of a solution of the aggregation-drift equation with $\alpha =1$ and $\beta =4$ and Gaussian initial data (\ref{gaussian}), $\mu = 0$, $\theta = 0.25$, as well as the rate of the decay of the entropy as the solution converges to equilibrium.  In Figure (\ref{fig:AR4}), we plot the evolution of the density from a non-radially symmetric initial data, given by five Gaussians to the same equilibrium profile. We follow the same strategy described in section \ref{aggeqn1d} to overcome the singularity of the interaction potential at $x=0$ and potential overshooting ($\epsilon= 2\times(\Delta x^2+ \Delta y^2)$ in Figure \ref{AD3}  and $\epsilon=2.6\times(\Delta x^2+ \Delta y^2)$ in Figure \ref{fig:AR4}.)

\begin{figure}[h!]
\centering
\textbf{Evolution of aggregation-drift equation} \\ \hspace{-.3cm}
\begin{picture}(155,130)
\put(0,0){\includegraphics[width=0.345\textwidth,trim={.6cm 1.1cm 1.3cm 2.2cm},clip]{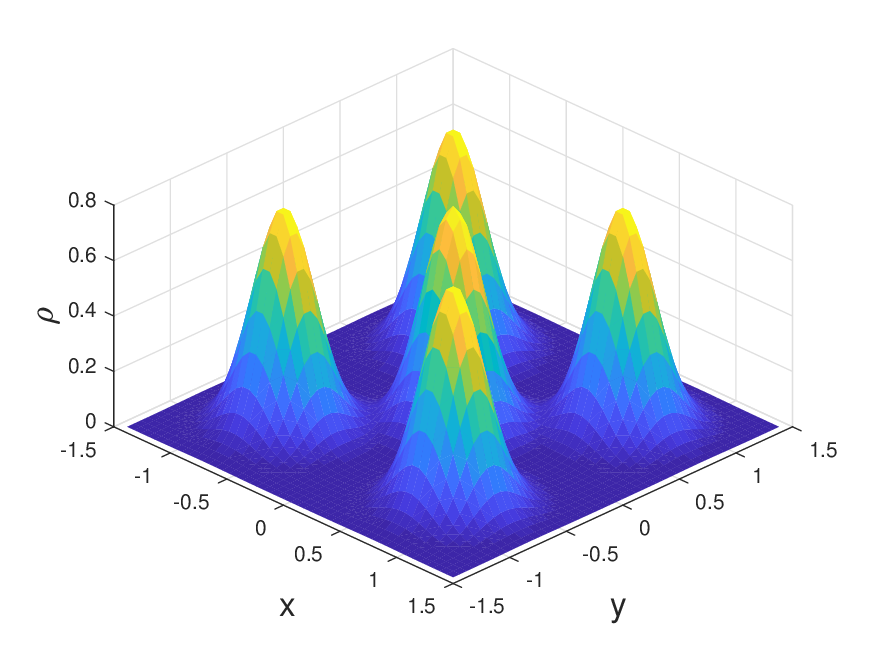}}
\put(72,105){t = 0.0}
\end{picture}
\begin{picture}(155,130)
\put(5,0){\includegraphics[width=0.33\textwidth,trim={1.9cm 1.1cm .7cm 2.2cm},clip]{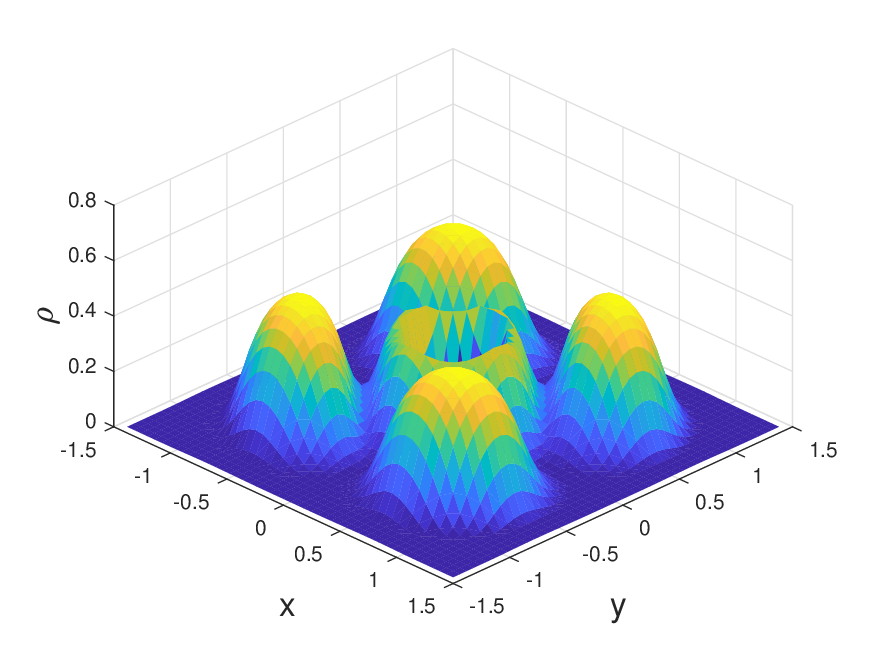}}
\put(63,105){t = 0.4}
\end{picture}
\begin{picture}(155,130)
\put(0,0){\includegraphics[width=0.33\textwidth,trim={1.9cm 1.1cm .7cm 2.2cm},clip]{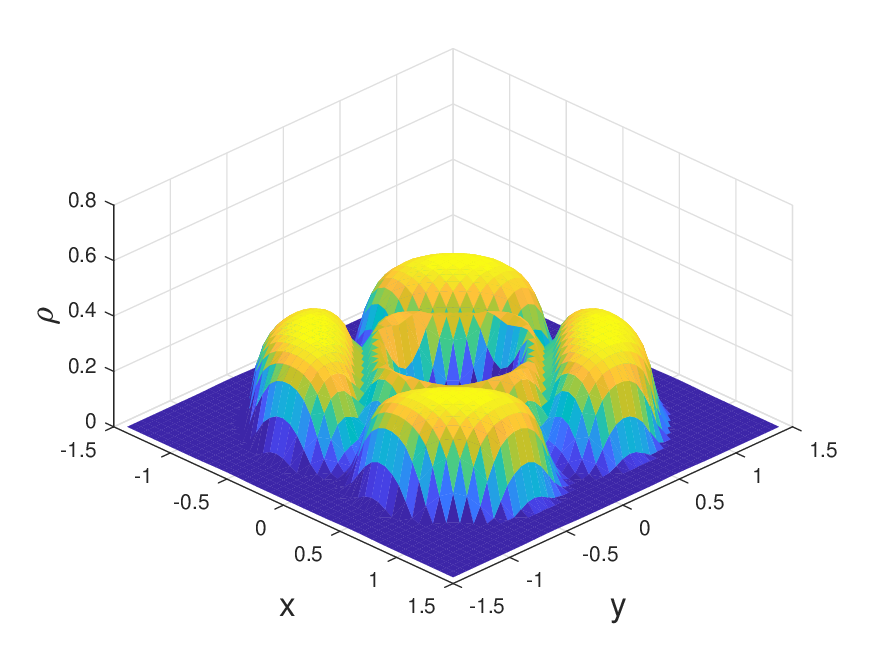}}
\put(58,105){t = 1.0}
\end{picture} \\  \hspace{-.3cm}
\begin{picture}(155,130)
\put(0,0){\includegraphics[width=0.345\textwidth,trim={.6cm 1.1cm 1.3cm 2.2cm},clip]{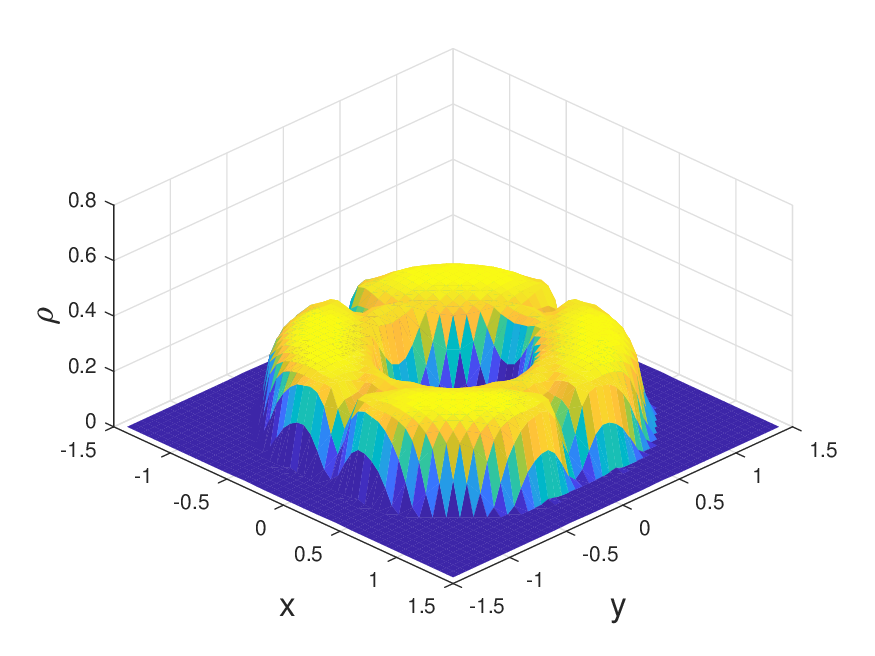}}
\put(72,105){t = 2.0}
\end{picture}
\begin{picture}(155,130)
\put(5,0){\includegraphics[width=0.33\textwidth,trim={1.9cm 1.1cm .7cm 2.2cm},clip]{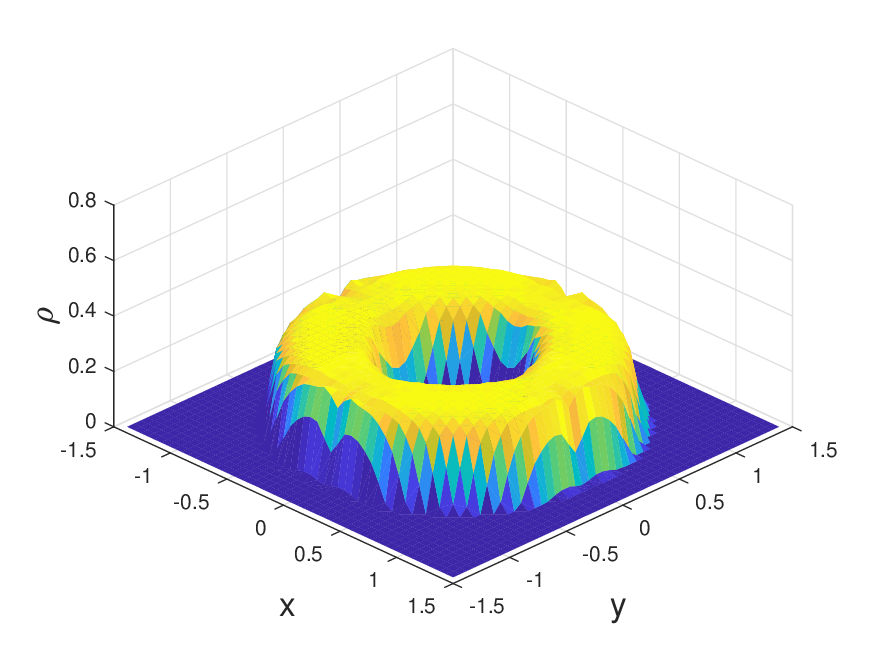}}
\put(63,105){t = 3.0}
\end{picture}
\begin{picture}(155,130)
\put(0,0){\includegraphics[width=0.33\textwidth,trim={1.9cm 1.1cm .7cm 2.2cm},clip]{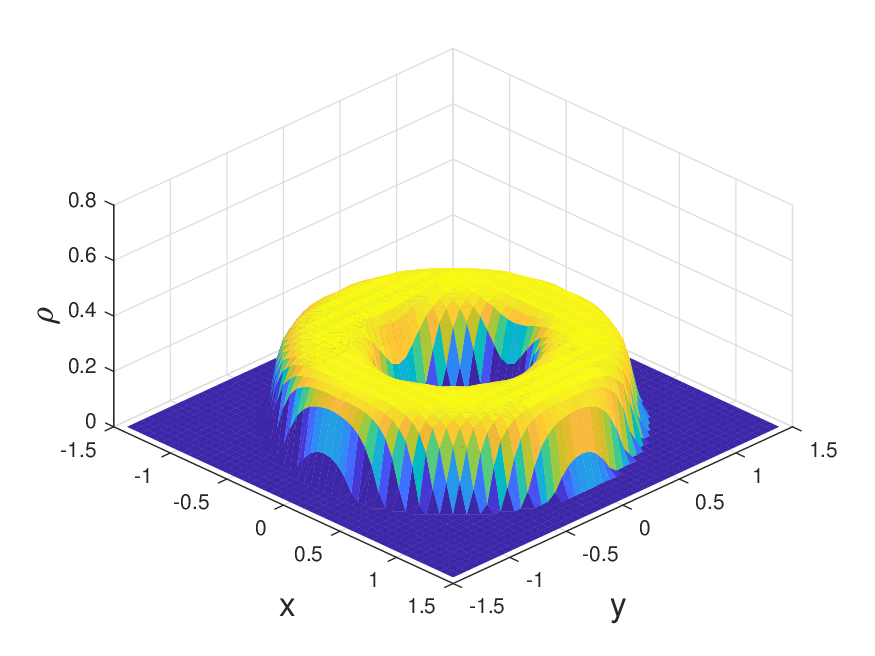}}
\put(58,105){t = 8.0}
\end{picture}
\caption{Evolution of the solution $\rho(x,y,t)$ to the two dimensional aggregation-drift equation, with $W(x) = x^2/2 - \ln(|x|)$ and $V(x) = -(1/4) \ln(|x|)$. The computational domain is [-1.5,1.5]$\times$[-1.5,1.5]. We choose $\tau = 0.2$, $ \Delta x = \Delta y = 0.06$,  $\Dt = 0.1$, $\lambda = 10$, and $\sigma=0.0244$. We observe convergence to the characteristic function on a torus centered at the origin.} \label{fig:AR4}
\end{figure}

\subsubsection{Aggregation diffusion equation}
We close by simulating several examples of aggregation diffusion equations
\begin{align} \label{aggdiffeqn}
\partial_t \rho = \grad\cdot (\rho \grad W*\rho) + \nu \Delta \rho^m , \quad W: \Rd \to \R, \quad m \geq 1 .
\end{align}
In recent years, there has been significant activity studying equations of this form, both analytically and numerically. When the interaction kernel $W$ is attractive, the competition between the nonlocal aggregation $ \grad\cdot (\rho \grad W*\rho)$ and nonlinear diffusion $\nu \Delta \rho^m$ causes solutions to blow up in certain regimes and exist globally in time in others, see for example \cite{BDP06,BCL09,CCH1,CCH2,CHMV} and the survey \cite{CCY}.  With fixed $m$, and in the presence of nonlocal interaction, the equation has a unique steady state which is radially decreasing up to a translation \cite{BL13, CHVY}.

\begin{figure}[h!]
\centering
\textbf{Evolution of aggregation-diffusion equation, smooth interaction kernel} \\ \hspace{-.2cm}
\begin{picture}(155,130)
\put(0,0){\includegraphics[width=0.345\textwidth,trim={.6cm 1.1cm 1.3cm 2.2cm},clip]{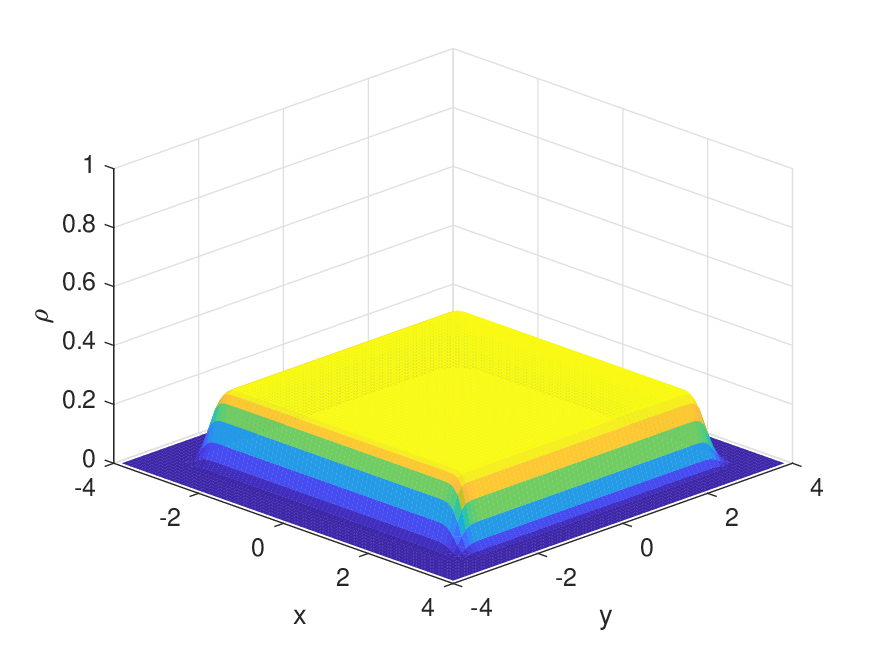}}
\put(72,105){t = 0}
\end{picture}
\begin{picture}(155,130)
\put(5,0){\includegraphics[width=0.33\textwidth,trim={1.9cm 1.1cm .7cm 2.2cm},clip]{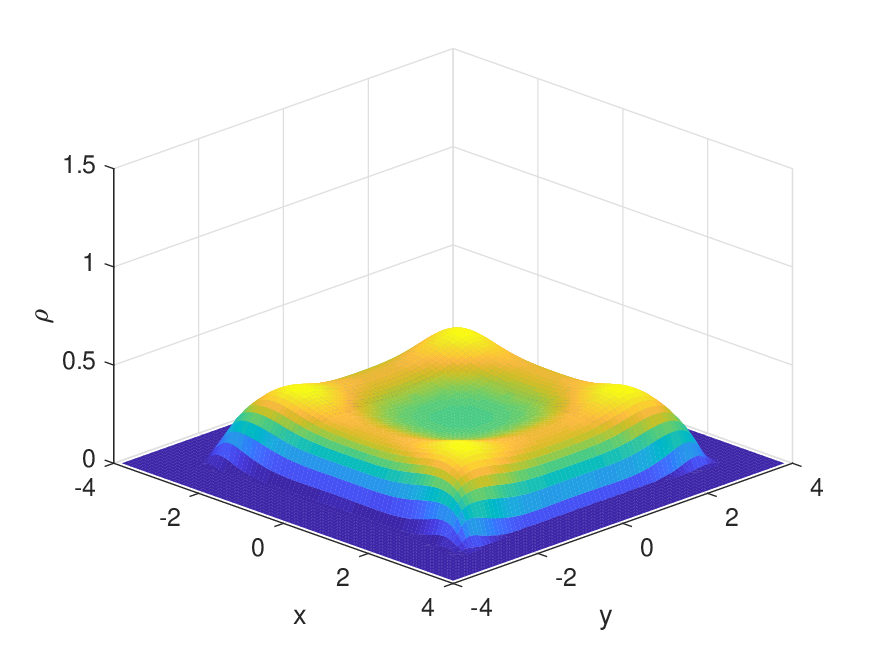}}
\put(63,105){t = 2}
\end{picture}
\begin{picture}(155,130)
\put(0,0){\includegraphics[width=0.33\textwidth,trim={1.9cm 1.1cm .7cm 2.2cm},clip]{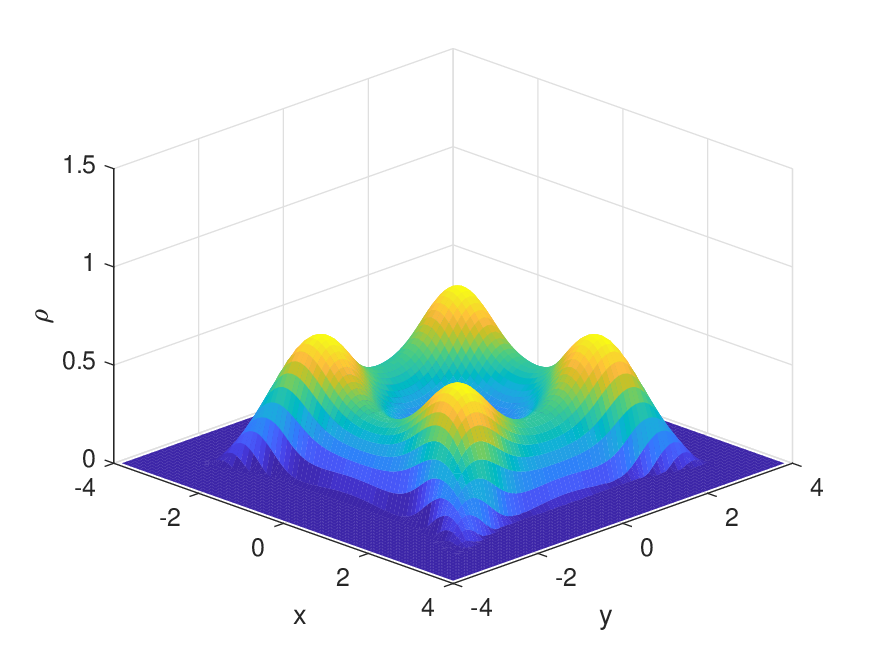}}
\put(58,105){t = 4}
\end{picture} \\  \hspace{-.3cm}
\begin{picture}(155,130)
\put(0,0){\includegraphics[width=0.345\textwidth,trim={.6cm 1.1cm 1.3cm 2.2cm},clip]{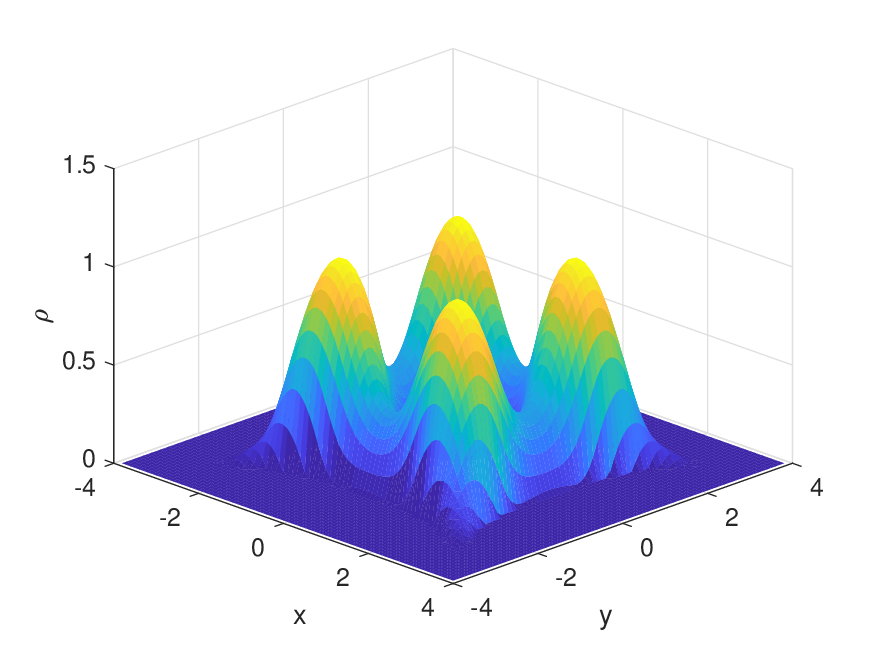}}
\put(72,105){t = 6}
\end{picture}
\begin{picture}(155,130)
\put(5,0){\includegraphics[width=0.33\textwidth,trim={1.9cm 1.1cm .7cm 2.2cm},clip]{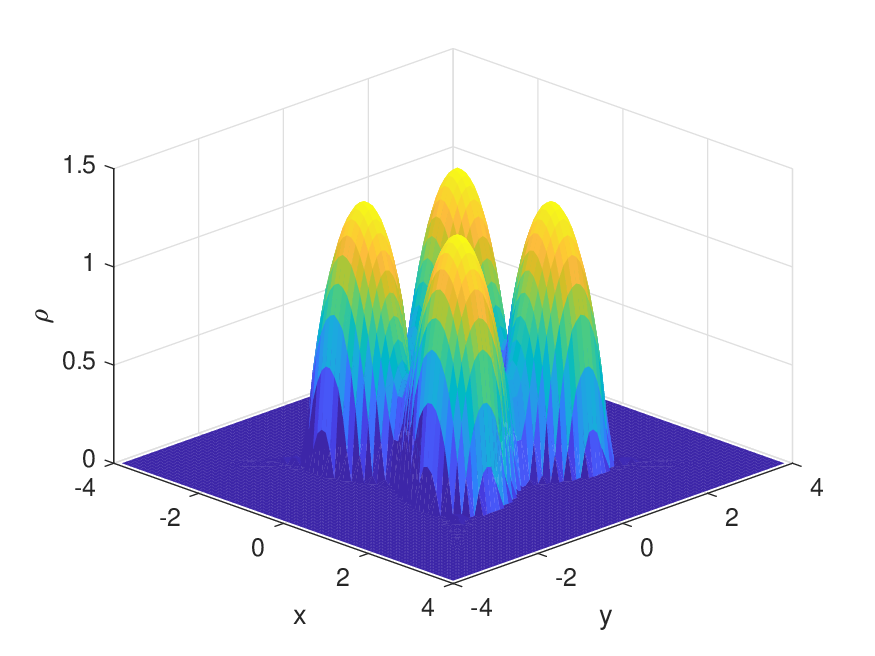}}
\put(63,105){t = 10}
\end{picture}
\begin{picture}(155,130)
\put(0,0){\includegraphics[width=0.33\textwidth,trim={1.9cm 1.1cm .7cm 2.2cm},clip]{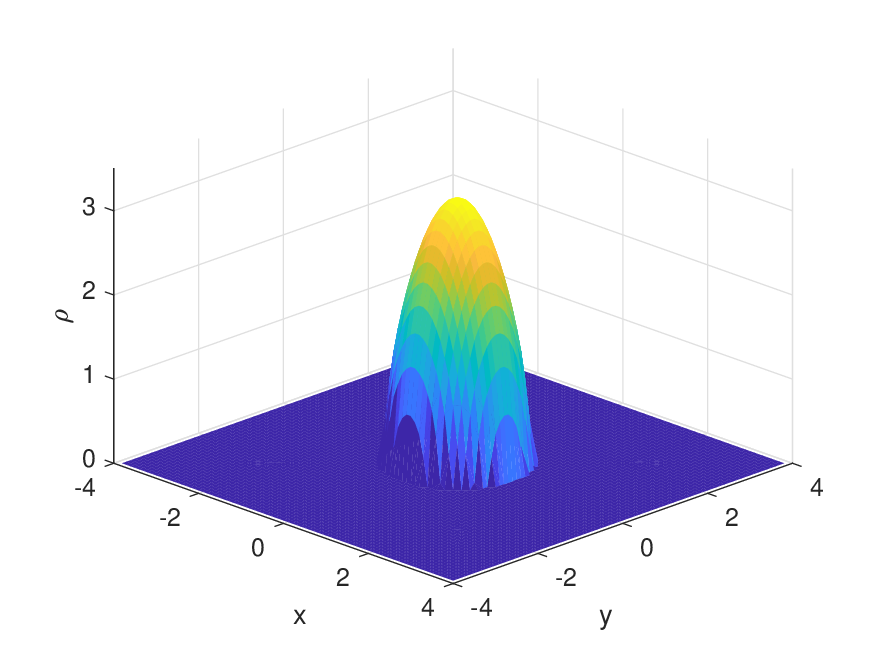}}
\put(58,105){t = 15}
\end{picture}
\caption{Evolution of solution $\rho(x,y,t)$ to the two dimensional aggregation-diffusion equation, with   $W(x) = -e^{-|x|^2}/\pi$, $\nu = 0.1,$ and $m =3$ on the domain $\Omega = [-4,4] \times [-4,4]$. We choose $\tau = 0.5$, $ \Delta x = \Delta y = 0.1$,  $\Dt = 0.1$, $\sigma=0.1144$, and $ \lambda=0.5$. The total iteration number for 40 JKO time steps is 197852. We observe convergence to the a single bump centered at the origin.} \label{fig:Agg2D}
\end{figure}
In Figure \ref{fig:Agg2D}, we simulate a solution of the aggregation diffusion equation with   $W(x) = - e^{-|x|^2}/\pi $, $\nu=0.1$, and $m =3$, and  initial data  given by a rescaled characteristic function on the square,
\[
\rho_0(x,y) = \frac{1}{4} \chi_{[-3,3]\times[-3,3]} (x,y)\,,
\]
Diffusion dominates both the short and long ranges, and the medium range aggregation leads to the formation of four bumps, which ultimately approach a single bump equilibrium. (See also \cite{CCH15}.)

\begin{figure}[h!]
\centering
\textbf{Keller-Segel equation:  blowup  vs. global existence for $m=1,2$} \\
\includegraphics[width=0.49\textwidth,trim={.3cm 0cm .7cm 1.8cm},clip]{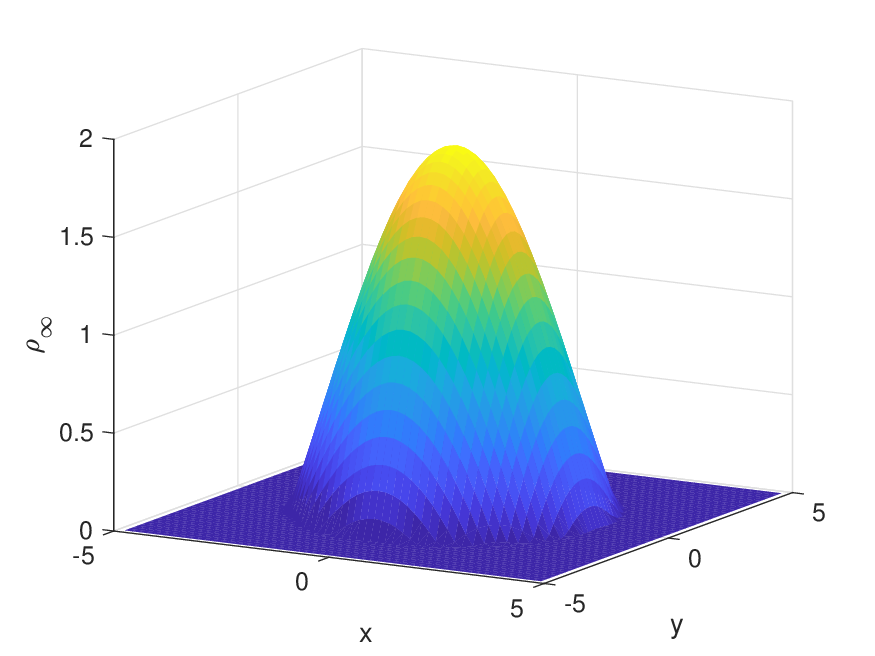}
\includegraphics[width=0.49\textwidth,trim={.3cm 0cm .7cm 1.8cm},clip]{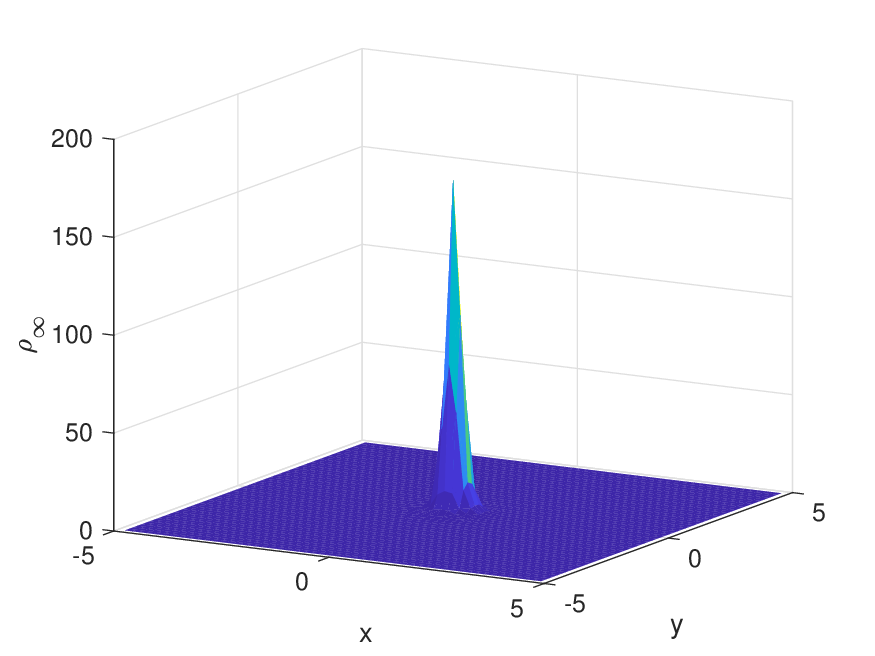}
\caption{Plot of solution $\rho(x,y,t)$ to the two dimensional Keller-Segel equation at $t=2$.  Left: When $m=2$, the solution approaches a bounded, continuous equilibrium profile. Here the computational domain is [-5,5]$\times$[-5,5]. Right: When $m=1$, the solution blows up, becoming sharply peaked. The computational domain here is [-2,2]$\times$[-2,2]. For both we choose $\tau = 0.05$, $ \Delta x = \Delta y = 0.067$, $\Dt = 0.1$, $\lambda = 0.5$, $\sigma = 0.042$.  } \label{fig:KS}
\end{figure}
In Figure \ref{fig:KS}, we simulate solutions of the Keller-Segel equation, which is an aggregation-diffusion equation (\ref{aggdiffeqn}) with a Newtonian interaction kernel, i.e. $W(x)=\frac{1}{2\pi}\text{ln}(|x|)$ in two dimensions for $\nu = 1$ and both $m=1$ and $m=2$, illustrating the role of the diffusion exponent in blowup or global existence of solutions. We choose the initial data to be given by a rescaled gaussian, obtained by multiplying equation (\ref{gaussian}) by a mass $M= 9 \pi$, with mean $\mu = 0$ and variance $\theta = 0.5$. On the left, we take $m =2$ and simulate the steady state of the Keller-Segel equation, which is a single bump. On the right, we simulate the long-time behavior of solutions for $m=1$, in which case we are in the  blow up regime. Indeed, at time $t=2$, we observe the formation of a blowup profile, with the solution becoming sharply peaked at the origin.

\begin{figure}[h!]
\centering
\textbf{Metastability of solutions to Keller-Segel equation} \\ \hspace{-.2cm}
\begin{picture}(155,130)
\put(0,0){\includegraphics[width=0.345\textwidth,trim={.6cm 1.1cm 1.3cm 2.2cm},clip]{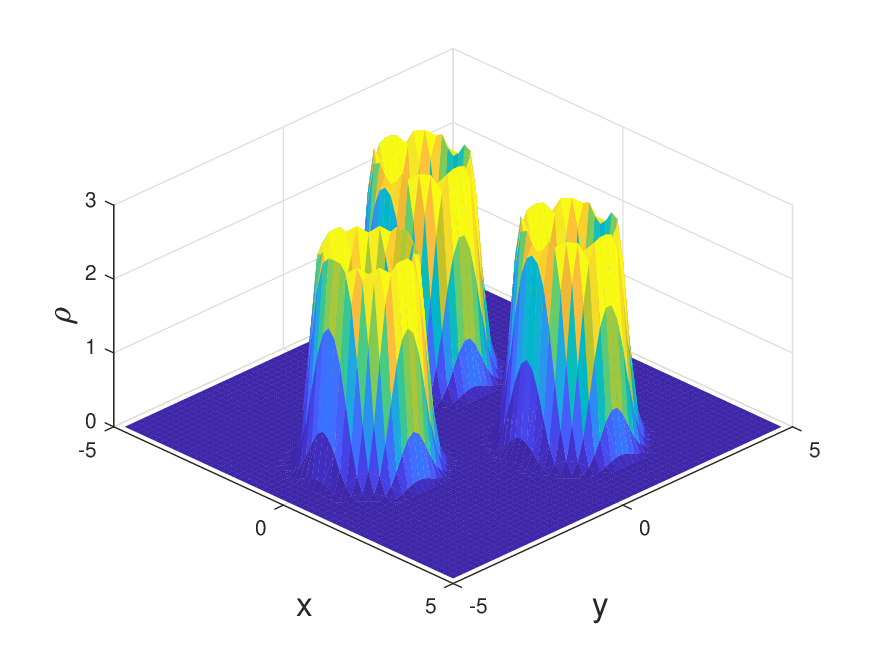}}
\put(72,105){t = 0.00}
\end{picture}
\begin{picture}(155,130)
\put(5,0){\includegraphics[width=0.33\textwidth,trim={1.9cm 1.1cm .7cm 2.2cm},clip]{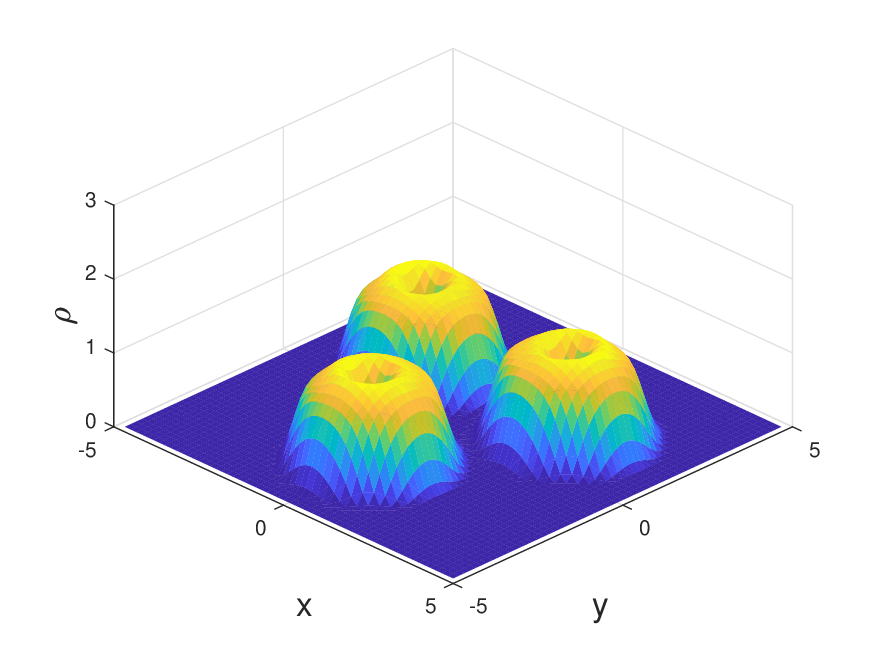}}
\put(63,105){t = 0.10}
\end{picture}
\begin{picture}(155,130)
\put(0,0){\includegraphics[width=0.33\textwidth,trim={1.9cm 1.1cm .7cm 2.2cm},clip]{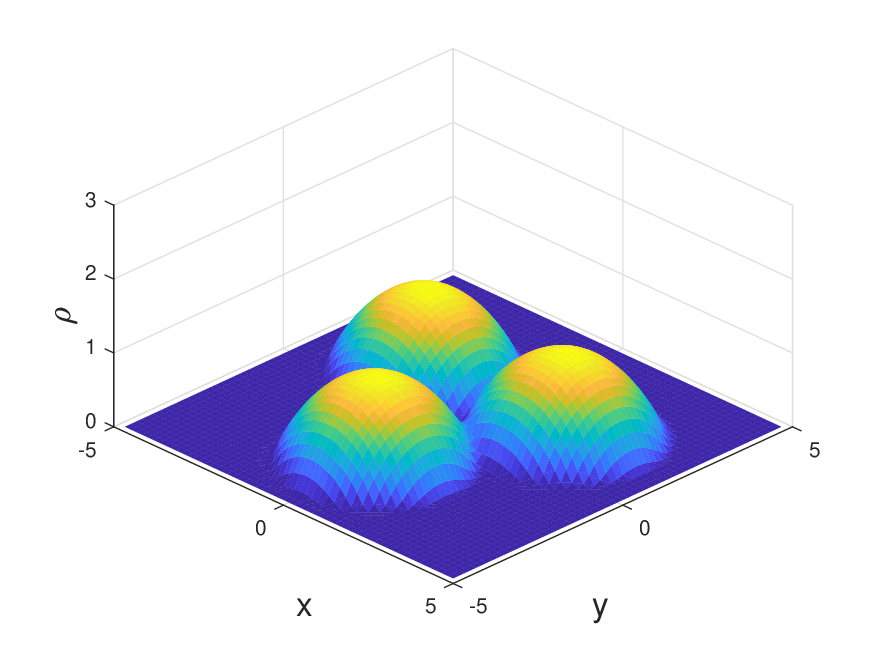}}
\put(58,105){t = 0.25}
\end{picture} \\  \hspace{-.3cm}
\begin{picture}(155,130)
\put(0,0){\includegraphics[width=0.345\textwidth,trim={.6cm 1.1cm 1.3cm 2.2cm},clip]{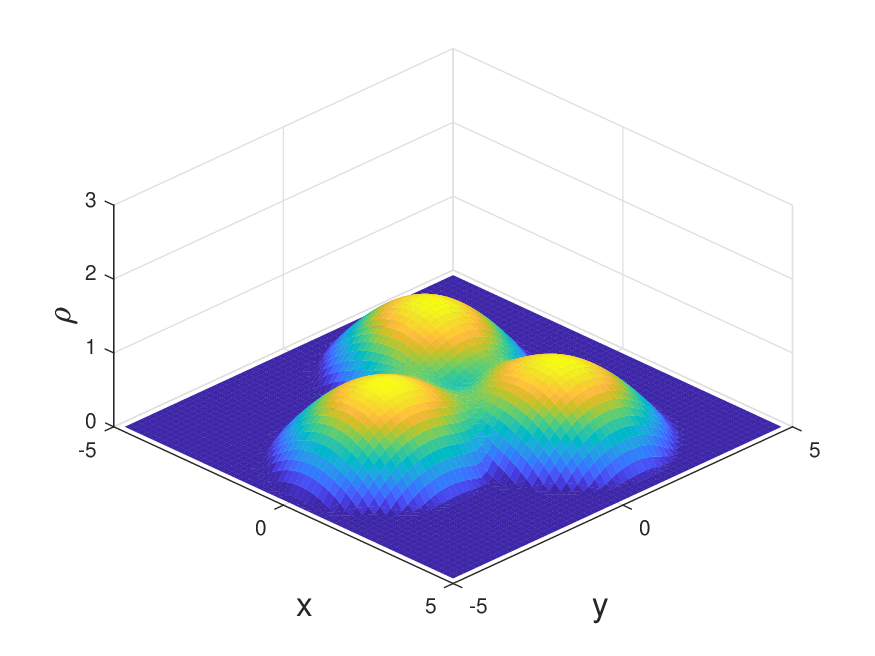}}
\put(72,105){t = 0.50}
\end{picture}
\begin{picture}(155,130)
\put(5,0){\includegraphics[width=0.33\textwidth,trim={1.9cm 1.1cm .7cm 2.2cm},clip]{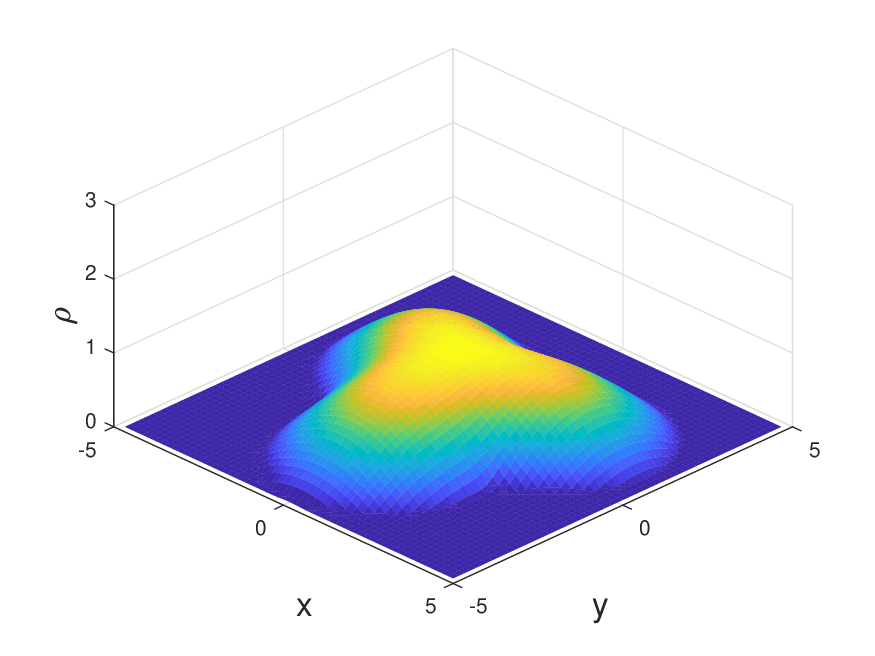}}
\put(63,105){t = 1.00}
\end{picture}
\begin{picture}(155,130)
\put(0,0){\includegraphics[width=0.33\textwidth,trim={1.9cm 1.1cm .7cm 2.2cm},clip]{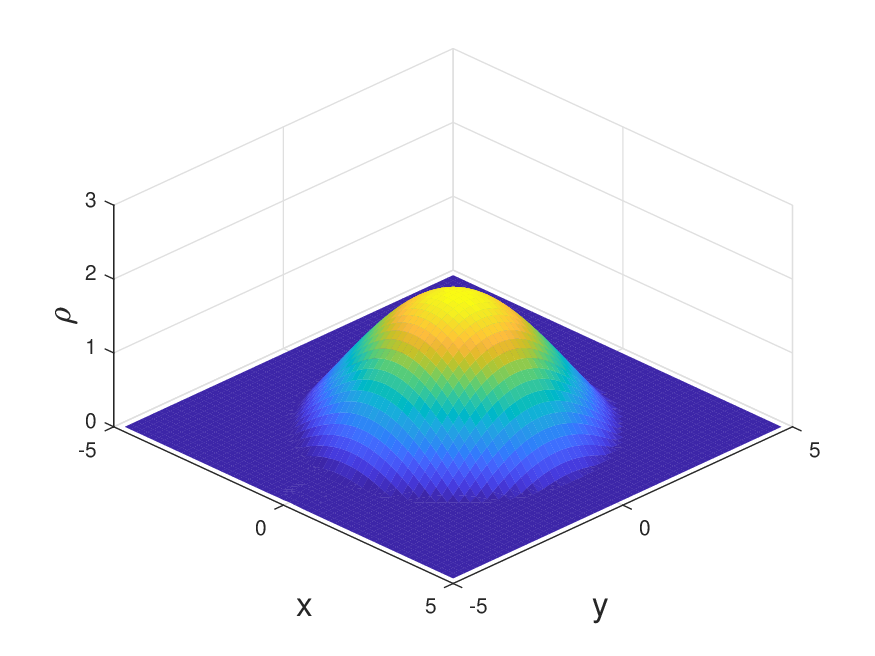}}
\put(58,105){t = 6.00}
\end{picture}
\caption{The evolution of $\rho(x,y,t)$ for the Keller-Segel equation with $U(x)=x^2$. Here $\Delta t = 0.1$, $h_x = h_y = 0.167$, $\tau = 0.05$.} \label{fig:KS3}
\end{figure}
In Figure \ref{fig:KS3}, we again simulate solutions of the Keller-Segel equation with  $m=2$, but in this case we take the initial data to be given by  three localized bumps (Gaussian rings, i.e., the radial cut of the ring is a Gaussian with a center on the circle.) We observe a two-stage evolution in which the each of the bumps converges to a localized quasi-stationary state, and then interact and merge into one single bump in the long time limit. This is a manifestation of the typical metastability phenomena, which is likely present in the majority of the diffusion dominated Keller-Segel models \cite{BFH14, CCH15,CCY}.   

\vspace{0.3cm}
\noindent{\bf Acknowledgement:} The authors would like to thank Ming Yan for fruitful discussions on primal dual methods and preliminary code for the three operator splitting algorithm. They would like to thank Wuchen Li for many helpful conversations. Finally, they would like to thank the anonymous reviewers for their useful observations and suggestions, which greatly improved this work. JAC was supported the Advanced Grant Nonlocal-CPD (Nonlocal PDEs for Complex Particle Dynamics: 	Phase Transitions, Patterns and Synchronization) of the European Research Council Executive Agency (ERC) under the European Union's Horizon 2020 research and innovation programme (grant agreement No. 883363). JAC was also partially supported by the EPSRC grant number EP/P031587/1. KC was supported by NSF DMS-1811012 and a Hellman Faculty Fellowship. LW and CW are partially supported by NSF DMS-1620135, 1903420, 1846854. 

\appendix

\section{Further details of numerical implementation} \label{numericalimplementation}
In this section, we provide explicit formulas for the matrix $\Amat$ and the vectors $u$ and $b$ introduced in section \ref{proxsplitsec}, which play a key role in the implementation of Algorithms \ref{alg:dis}, \ref{alg:nonlinear}, and \ref{alg:nonlinearJKO}. For simplicity, we consider the case of one space dimension, and the discretization takes the form (\ref{crankspace}). The construction of $\tilde{A}$ and $\tilde{b}$ are very similar except a slightly different treatment of $\rho$ at final time.  

Define $N=(N_x+1)  (N_t+1)$.  Let $\Kron$ denote   the Kronecker tensor product,  $\Imat_{N_x+1}$  the identity matrix of size $N_x+1$, and  $(\vec{x})_M$   the column vector in $\mathbb{R}^M$ with all components equal to $x$. Then we define 
\[
{u} = \big[(\vec{\rho}_{.,k})_{k=0}^{N_t} ; ~(\vec{m}_{.,k})_{k=0}^{N_t}\big] \in \RR^N, \quad \vec{\rho}_{.,k}=(\rho_{j,k})_{j=0}^{N_x}, \ \vec{m}_{.,k}=(m_{j,k})_{j=0}^{N_x}
\]
and the matrix $\Amat \in \RR^{M \times 2N}$ takes the form 
\[
\Amat =
\begin{bmatrix}
\Amat_\rho & \vrule & \Amat_m \\ \hline
\Amat_\textrm{mass} & \vrule &  0\\ 
\end{bmatrix}\,.
\]
Here $\Amat_\rho \in \RR^{N\times N}$ reads \[\Amat_\rho = \Dmat_t^{(1)} \otimes \Imat_x^{(1)} + 
\Dmat_t^{(2)} \otimes \Imat_{N_x + 1} := A_\rho^{(1)} + A_\rho^{(2)}\,, \] where $\Dmat_t^{(1)}$, $\Dmat_t^{(2)} \in \RR^{(N_t+1) \times (N_t + 1)}$, and $\Imat_x^{(1)} \in \RR^{(N_x + 1) \times (N_x + 1)} $ are
\[
\Imat_x^{(1)} = 
\begin{bmatrix} 0 &  & \\  & \Imat_{N_x-1} & \\  &  &  0 \end{bmatrix} \,, \quad 
\Dmat_t^{(1)}  =
\begin{bmatrix}
 0    &   & &    &       \\
-1    & 1    &  &            &                          \\
 &    \ddots        & \ddots &  &       \\
                          &            & -1 & 1 &     \\
     &  &  &  & 0  \\
\end{bmatrix} \,,
\quad 
\Dmat_t^{(2)} = 
\begin{bmatrix}
1 &  &  \\ &  {\bf 0} &  \\ &  &  1
\end{bmatrix}\,.
\]
Here $\Dmat_t^{(1)}$ and $\Dmat_t^{(2)}$ correspond to the temporal discretization and initial condition for $\rho$. Likewise, \[\Amat_m \in \RR^{N\times N} = \Bmat_t^{(1)} \otimes \Dmat_x^{(1)} + \Imat_{N_t + 1} \otimes \Dmat_x^{(2)} : = \Amat_m^{(1)} + \Amat_m^{(2)}\,,\] where $\Dmat_x^{(1)}$, $\Dmat_x^{(2)} \in \RR^{(N_x + 1)}$, and $\Bmat_t^{(1)} \in \RR^{(N_t + 1) \times (N_t + 1)}$:
\[
\Dmat_x^{(1)}= \frac{\Delta t}{4\Delta x}
\begin{bmatrix}
 0     &    & &    &       \\
-1    & 0     & 1 &            &                          \\
             & \ddots & \ddots & \ddots &     \\
                          &            & -1 & 0 &  1   \\
     &  &  &  & 0  \\
\end{bmatrix} \,,
\quad
\Dmat_x^{(2)} = 
\begin{bmatrix}
1 &  &  \\ &  {\bf 0} &  \\ &  &  1
\end{bmatrix}\,, \quad
\Bmat_t^{(1)} = 
\begin{bmatrix}
 0    &   & &    &       \\
1    & 1    &  &            &                          \\
 &    \ddots        & \ddots &  &       \\
                          &            & 1 & 1 &     \\
     &  &  &  & 0  \\
\end{bmatrix}\,.
\]
For mass conservation, let $\Smat_\rho = ({\bf \Delta x})_{N_x+1} ^t$, then $\Amat_\text{mass} = \Imat_{N_t + 1} \otimes \Smat_\rho$. In sum, different $\Amat_i$ can be written as
\[
\Amat_1 = \begin{bmatrix}  \Amat_\rho^{(1)}  & \Amat_m ^{(1)} \\  0 & 0 \end{bmatrix}, \quad
\Amat_2 = \begin{bmatrix}  0 & \Amat_m ^{(2)} \\  0 & 0 \end{bmatrix}, \quad 
\Amat_3 = \begin{bmatrix}  0 & 0\\  \Amat_\text{mass} & 0 \end{bmatrix}, \quad
\Amat_{4} (+ \Amat_5) = \begin{bmatrix}  \Amat_\rho^{(2)} & 0\\  0 & 0 \end{bmatrix} \,.
\]
Accordingly, $b \in \RR^{N + N_t + 1 }$ collects all the initial conditions for $\rho$ and boundary conditions for $m$. More specifically, it writes
\begin{eqnarray*}
b &=& [(\vec{\rho}_{.,0}); \vec{0}_{(N_t-1)(N_x+1)}; (\vec{\rho}_{.,N_t+1}); \vec{0}_{N_t+1} ] + [(m_{0,0}; \vec{0};m_{N_x,0}); \cdots ; (m_{0,N_t}; \vec{0};m_{N_x,N_t}); \vec{0}_{N_t + 1}]
\\ && \hspace{12cm}+ [\vec{0}_{N}; \vec{1}_{N_t + 1} ]
\\&:=& b_4 + (b_5) + b_2 + b_3
\end{eqnarray*}
and $b_1 = 0$.

\bibliography{OT_reference}
\bibliographystyle{siam}

\end{document}